\documentclass{article}

\usepackage{titling}
\thanksmarkseries{alph}
\makeindex

\usepackage{amsmath, amsthm, amssymb, mathtools}
\usepackage{thmtools} 
\usepackage{standalone}
\usepackage{tikz}
\usetikzlibrary{shapes.geometric} 

\usepackage{ytableau}
\usepackage{subcaption}
\usepackage{booktabs}
\usepackage{hyperref}
\hypersetup{colorlinks=true, linkcolor=black, urlcolor=black, linktoc=all, citecolor=black}
\usepackage{cleveref}

\newtheorem{theorem}{Theorem}[section]
\newtheorem{lemma}[theorem]{Lemma}
\newtheorem{corollary}{Corollary}[theorem]

\newcommand{\abs}[1]{\left| #1 \right|}
\newcommand{\floor}[1]{\left\lfloor #1 \right\rfloor}
\newcommand{\ceil}[1]{\left\lceil #1 \right\rceil}

\newcommand{\bfb}[1]{{\textbf{#1}}}

\newcommand{\poly}{\gamma}

\newcommand{\chartpoly}[5]{
        \begin{scope}[xshift = #1 cm, yshift = #2 cm]
        \draw[ultra thick] #5;
        \node at (#3, -.5) {#4};
        \clip #5;
        \foreach \z in {0,1,2,3,4,5}{
            \draw (\z,0) --++ (0,5);
            \draw (0,\z) --++ (5,0);
        }
    \end{scope}}

\newcommand{\shadepoly}[5]{
        \begin{scope}[xshift = #1 cm, yshift = #2 cm]
        \draw[-] #5;
        \node at (#3, -.5) {#4};
        \clip #5;
        \draw[draw=black!10, fill=black!10] (0,0) rectangle (5,5);
        \foreach \z in {0,1,2,3,4,5}{
            \draw (\z,0) --++ (0,5);
            \draw (0,\z) --++ (5,0);
        }
    \end{scope}}
        
\newcommand{\zig}[1]{
\begin{tikzpicture}[scale=1/#1]
    \foreach \z in {1,2,...,#1}{
    \draw[-] (0,0) rectangle (\z,#1+1-\z);
    }
\end{tikzpicture}
}

\newcommand{\polygonalarray}[4]{
        \pgfmathtruncatemacro{\n}{#1-1}
        \draw[-] (#3,0)
        \foreach \x in {0,...,\n}{
        \foreach \y in {1,...,#2}{
        --++ (\x*360/#1:#4*1cm) node [shape=circle, fill=black,scale=.45]{}
        }
        }
        -- cycle;
}

\newcommand{\Aztecpath}[1]{
    --++(1,0)
    \foreach \z in {1,...,#1}{
    --++(0,1)--++(1,0)
    }
    --++(0,1)
    \foreach \z in {1,...,#1}{
    --++(-1,0)--++(0,1)
    }
    --++(-1,0)
    \foreach \z in {1,...,#1}{
    --++(0,-1)--++(-1,0)
    }
    --++(0,-1)
    \foreach \z in {1,...,#1}{
    --++(1,0)--++(0,-1)
    }
    --cycle
}

\newcommand{\Ashape}{\draw[fill=black!10] (0,1) \Aztecpath{3};
        \foreach \z in {0,1,2}{
            \draw (-\z, 2+\z) --++ (0,5-2*\z); 
            \draw (1+\z, 2+\z) --++ (0,5-2*\z); 
            
            \draw (-\z, 2+\z) --++ (1+2*\z,0);
            \draw (-\z, 7-\z) --++ (1+2*\z,0);  
}}

\newcommand{\Fmopath}[1]{
    --++(1,0)
    \foreach \z in {1,...,#1}{
    --++(0,2)--++(1,0)
    }
    --++(0,1)--++(-1-#1,0)
    \foreach \z in {1,...,#1}{
    --++(0,-1)--++(-1,0)
    }
    --++(0,-1)
    \foreach \z in {1,...,#1}{
    --++(1,0)--++(0,-1)
    }
    --cycle
}

\newcommand{\cell}[2]{\draw[-] (#1,#2) rectangle ++ (1,1);}

\newcommand{\numcell}[3]{\draw[-] (#1,#2) rectangle ++ (1,1);\node at (#1+.5,#2+.5){#3};}

\newcommand{\dmo}{\begin{tikzpicture}[scale=0.25, rotate=90]
        \draw[black,  semithick](0,0)rectangle(0.5,1);
        \draw[black,  semithick] (0,.5) --++(.5,0);
        \end{tikzpicture}}

\newcommand{\tmo}{\begin{tikzpicture}[scale=0.25]
        \draw[black,  semithick](0,0)rectangle(0.5,0.5);
        \draw[black,  semithick](0,0.5)rectangle(0.5,1);
        \draw[black,  semithick](0.5,0)rectangle(1,0.5);
        \end{tikzpicture}}
        
\newcommand{\stmo}{\begin{tikzpicture}[scale=0.25]
        \draw[black,  semithick](0,0)rectangle++(0.5,0.5);
        \draw[black,  semithick](.5,0)rectangle++(0.5,0.5);
        \draw[black,  semithick](1,0)rectangle++(0.5,0.5);
        \end{tikzpicture}}

\newcommand{\sqmo}{\begin{tikzpicture}[scale=0.25]
        \draw[black,  semithick](0,0)rectangle(0.5,0.5);
        \draw[black,  semithick](0,0.5)rectangle(0.5,1);
        \draw[black,  semithick](0.5,0)rectangle(1,0.5);
        \draw[black,  semithick](0.5,0.5)rectangle(1,1);
    \end{tikzpicture}}

\newcommand{\lmo}{\begin{tikzpicture}[scale=0.25]
        \draw[black,  semithick](0,0)rectangle(0.5,0.5);
        \draw[black, semithick](0,0.5)rectangle(0.5,1);
        \draw[black,  semithick](0.5,0)rectangle(1,0.5);
        \draw[black,  semithick](1,0)rectangle(1.5,0.5);
        \end{tikzpicture}}  

\newcommand{\ttmo}{\begin{tikzpicture}[scale=0.25]
        \draw[black,  semithick](-0.5,0)rectangle++(0.5,0.5);
        \draw[black,  semithick](0,0)rectangle++(0.5,0.5);
        \draw[black,  semithick](0,0.5)rectangle++(0.5,0.5);
        \draw[black,  semithick](0.5,0)rectangle++(0.5,0.5);
        \end{tikzpicture}}

\newcommand{\zmo}{\begin{tikzpicture}[scale=0.25]
        \draw[black,  semithick](0,0)rectangle(0.5,0.5);
        \draw[black,  semithick](-0.5,0.5)rectangle(0,1);
        \draw[black,  semithick](0,0.5)rectangle(0.5,1);
        \draw[black,  semithick](0.5,0)rectangle(1,0.5);
        \end{tikzpicture}}

\newcommand{\tmot}{\begin{tikzpicture}[scale=0.16]
        \draw[black](0,0)rectangle(0.5,0.5);
        \draw[black](0,0.5)rectangle(0.5,1);
        \draw[black](0.5,0)rectangle(1,0.5);
        \end{tikzpicture}}

\newcommand{\lmot}{\begin{tikzpicture}[scale=0.16]
        \draw[black](0,0)rectangle(0.5,0.5);
        \draw[black](0,0.5)rectangle(0.5,1);
        \draw[black](0.5,0)rectangle(1,0.5);
        \draw[black](1,0)rectangle(1.5,0.5);
        \end{tikzpicture}} 

\newcommand{\sqmot}{\begin{tikzpicture}[scale=0.16]
        \draw[black](0,0)rectangle(0.5,0.5);
        \draw[black](0,0.5)rectangle(0.5,1);
        \draw[black](0.5,0)rectangle(1,0.5);
        \draw[black](0.5,0.5)rectangle(1,1);
    \end{tikzpicture}}

\newcommand{\tttmo}{\begin{tikzpicture}[scale=0.16]
        \draw[black](-0.5,0)rectangle(0,0.5);
        \draw[black](0,0)rectangle(0.5,0.5);
        \draw[black](0,0.5)rectangle(0.5,1);
        \draw[black](0.5,0)rectangle(1,0.5);
        \end{tikzpicture}}
        
\newcommand{\tumo}{\begin{tikzpicture}[scale=0.16]
        \draw[black](0,0)rectangle(0.5,0.5);
        \draw[black](0,0.5)rectangle(0.5,1);
        \draw[black](0.5,0)rectangle(1,0.5);
        \draw[black](1,0)rectangle(1.5,0.5);
        \draw[black](1,0.5)rectangle(1.5,1);
        \end{tikzpicture}}

\newcommand{\zmot}{\begin{tikzpicture}[scale=0.16]
        \draw[black](0,0)rectangle(0.5,0.5);
        \draw[black](-0.5,0.5)rectangle(0,1);
        \draw[black](0,0.5)rectangle(0.5,1);
        \draw[black](0.5,0)rectangle(1,0.5);
        \end{tikzpicture}}

\newcommand{\fmo}{\begin{tikzpicture}[scale=0.16]
        \draw[black,  semithick](0,0)rectangle(0.5,0.5);
        \draw[black,  semithick](0,0.5)rectangle(0.5,1);
        \draw[black,  semithick](0.5,1)rectangle(1,1.5);
        \draw[black,  semithick](0,0.5)rectangle(-0.5,1);
        \draw[black,  semithick](0,1)rectangle(0.5,1.5);
        \end{tikzpicture}}

\newcommand{\lfmo}{\begin{tikzpicture}[scale=0.25]
        \draw[black,  semithick](0,0)rectangle(0.5,0.5);
        \draw[black,  semithick](0,0.5)rectangle(0.5,1);
        \draw[black,  semithick](0.5,1)rectangle(1,1.5);
        \draw[black,  semithick](0,0.5)rectangle(-0.5,1);
        \draw[black,  semithick](0,1)rectangle(0.5,1.5);
        \end{tikzpicture}}

\newcommand{\lpmo}{\begin{tikzpicture}[scale=0.25]
        \draw[black, semithick](0,0)rectangle(0.5,0.5);
        \draw[black, semithick](0,0.5)rectangle(0.5,1);
        \draw[black, semithick](0.5,0)rectangle(1,0.5);
        \draw[black, semithick](1,0)rectangle(1.5,0.5);
        \draw[black, semithick](1.5,0)rectangle(2,0.5);
        \end{tikzpicture}}  

\newcommand{\smo}{\begin{tikzpicture}[scale=.25]
        \draw[black,  semithick](0,0)rectangle(0.5,0.5);
        \draw[black,  semithick](0.5,0)rectangle(1,0.5);
        \draw[black,  semithick](1,0)rectangle(1.5,0.5);
        \draw[black,  semithick](0,0.5)rectangle(0.5,1);
        \draw[black,  semithick](-0.5,0.5)rectangle(0,1);
        \end{tikzpicture}}
        
\newcommand{\ppmo}{\begin{tikzpicture}[scale=0.25,x={(90:1cm)},y={(0:1cm)}]
        \draw[black,  semithick](0,0)rectangle(0.5,0.5);
        \draw[black,  semithick](0,0.5)rectangle(0.5,1);
        \draw[black,  semithick](0.5,0)rectangle(1,0.5);
        \draw[black,  semithick](0.5,0.5)rectangle(1,1);
        \draw[black,  semithick](0,1)rectangle(0.5,1.5);
    \end{tikzpicture}}

\newcommand{\tpmo}{\begin{tikzpicture}[scale=0.16]
        \draw[black,  semithick](0,0)rectangle++(0.5,0.5);
        \draw[black,  semithick](0,0.5)rectangle++(0.5,0.5);
        \draw[black,  semithick](0.5,0)rectangle++(0.5,0.5);
        \draw[black,  semithick](-.5,0)rectangle++(0.5,0.5);
        \draw[black,  semithick](0,1)rectangle++(0.5,0.5);
        \end{tikzpicture}} 

\newcommand{\ltpmo}{\begin{tikzpicture}[scale=0.25]
        \draw[black,  semithick](0,0)rectangle++(0.5,0.5);
        \draw[black,  semithick](0,0.5)rectangle++(0.5,0.5);
        \draw[black,  semithick](0.5,0)rectangle++(0.5,0.5);
        \draw[black,  semithick](-.5,0)rectangle++(0.5,0.5);
        \draw[black,  semithick](0,1)rectangle++(0.5,0.5);
        \end{tikzpicture}}

\newcommand{\umo}{\begin{tikzpicture}[scale=0.25]
        \draw[black,  semithick](0,0)rectangle(0.5,0.5);
        \draw[black,  semithick](0,0.5)rectangle(0.5,1);
        \draw[black,  semithick](0.5,0)rectangle(1,0.5);
        \draw[black,  semithick](1,0)rectangle(1.5,0.5);
        \draw[black,  semithick](1,0.5)rectangle(1.5,1);
        \end{tikzpicture}}

\newcommand{\vpmo}{\begin{tikzpicture}[scale=0.16]
        \draw[black,  semithick](0,0)rectangle(0.5,0.5);
        \draw[black,  semithick](0,0.5)rectangle(0.5,1);
        \draw[black,  semithick](0.5,0)rectangle(1,0.5);
        \draw[black,  semithick](1,0)rectangle(1.5,0.5);
        \draw[black,  semithick](0,1)rectangle(0.5,1.5);
        \end{tikzpicture}}  

\newcommand{\lvpmo}{\begin{tikzpicture}[scale=0.25]
        \draw[black,  semithick](0,0)rectangle(0.5,0.5);
        \draw[black,  semithick](0,0.5)rectangle(0.5,1);
        \draw[black,  semithick](0.5,0)rectangle(1,0.5);
        \draw[black,  semithick](1,0)rectangle(1.5,0.5);
        \draw[black,  semithick](0,1)rectangle(0.5,1.5);
        \end{tikzpicture}}  

\newcommand{\wmo}{\begin{tikzpicture}[scale=.16]
        \draw[black,  semithick](1,0)rectangle++(0.5,0.5);
        \draw[black,  semithick](0.5,0)rectangle++(0.5,0.5);
        \draw[black,  semithick](0.5,0.5)rectangle++(0.5,0.5);
        \draw[black,  semithick](0,0.5)rectangle++(0.5,0.5);
        \draw[black,  semithick](0,1)rectangle++(0.5,0.5);
        \end{tikzpicture}}

\newcommand{\lwmo}{\begin{tikzpicture}[scale=.25]
        \draw[black,  semithick](1,0)rectangle++(0.5,0.5);
        \draw[black,  semithick](0.5,0)rectangle++(0.5,0.5);
        \draw[black,  semithick](0.5,0.5)rectangle++(0.5,0.5);
        \draw[black,  semithick](0,0.5)rectangle++(0.5,0.5);
        \draw[black,  semithick](0,1)rectangle++(0.5,0.5);
        \end{tikzpicture}}
        
\newcommand{\xmo}{\begin{tikzpicture}[scale=.16]
        \draw[black,  semithick](0,0)rectangle(0.5,0.5);
        \draw[black,  semithick](0,0.5)rectangle(0.5,1);
        \draw[black,  semithick](0,1)rectangle(0.5,1.5);
        \draw[black,  semithick](0.5,0.5)rectangle(1,1);
        \draw[black,  semithick](-0.5,0.5)rectangle(0,1);
        \end{tikzpicture}}

\newcommand{\lxmo}{\begin{tikzpicture}[scale=.25]
        \draw[black,  semithick](0,0)rectangle(0.5,0.5);
        \draw[black,  semithick](0,0.5)rectangle(0.5,1);
        \draw[black,  semithick](0,1)rectangle(0.5,1.5);
        \draw[black,  semithick](0.5,0.5)rectangle(1,1);
        \draw[black,  semithick](-0.5,0.5)rectangle(0,1);
        \end{tikzpicture}}

\newcommand{\ypmo}{\begin{tikzpicture}[scale=0.25]
        \draw[black,  semithick](0,0)rectangle(0.5,0.5);
        \draw[black,  semithick](1,0.5)rectangle(0.5,1);
        \draw[black,  semithick](0.5,0)rectangle(1,0.5);
        \draw[black,  semithick](1,0)rectangle(1.5,0.5);
        \draw[black,  semithick](1.5,0)rectangle(2,0.5);
        \end{tikzpicture}} 

\newcommand{\zpmo}{\begin{tikzpicture}[scale=0.16]
        \draw[black,  semithick](0,0)rectangle(0.5,0.5);
        \draw[black,  semithick](0,0.5)rectangle(0.5,1);
        \draw[black,  semithick](0.5,0)rectangle(1,0.5);
        \draw[black,  semithick](0,1)rectangle++(-0.5,0.5);
        \draw[black,  semithick](0,1)rectangle(0.5,1.5);
        \end{tikzpicture}}

\newcommand{\lzpmo}{\begin{tikzpicture}[scale=0.25]
        \draw[black,  semithick](0,0)rectangle(0.5,0.5);
        \draw[black,  semithick](0,0.5)rectangle(0.5,1);
        \draw[black,  semithick](0.5,0)rectangle(1,0.5);
        \draw[black,  semithick](0,1)rectangle++(-0.5,0.5);
        \draw[black,  semithick](0,1)rectangle(0.5,1.5);
        \end{tikzpicture}}  

\newcommand{\zpmobackwards}{\begin{tikzpicture}[xscale=0.16,yscale=-.16]
        \draw[black,  semithick](0,0)rectangle(0.5,0.5);
        \draw[black,  semithick](0,0.5)rectangle(0.5,1);
        \draw[black,  semithick](0.5,0)rectangle(1,0.5);
        \draw[black,  semithick](0,1)rectangle++(-0.5,0.5);
        \draw[black,  semithick](0,1)rectangle(0.5,1.5);
        \end{tikzpicture}}

\newcommand{\tfmo}{\begin{tikzpicture}[scale=0.12]
        \draw[black](0,0)rectangle(0.5,0.5);
        \draw[black](0,0.5)rectangle(0.5,1);
        \draw[black](0.5,1)rectangle(1,1.5);
        \draw[black](0,0.5)rectangle(-0.5,1);
        \draw[black](0,1)rectangle(0.5,1.5);
        \end{tikzpicture}}    

\newcommand{\tlpmo}{\begin{tikzpicture}[scale=0.16]
        \draw[black](0,0)rectangle(0.5,0.5);
        \draw[black](0,0.5)rectangle(0.5,1);
        \draw[black](0.5,0)rectangle(1,0.5);
        \draw[black](1,0)rectangle(1.5,0.5);
        \draw[black](1.5,0)rectangle(2,0.5);
        \end{tikzpicture}}    
        
\newcommand{\tppmo}{\begin{tikzpicture}[scale=0.16,x={(90:1cm)},y={(0:1cm)}]
        \draw[black](0,0)rectangle(0.5,0.5);
        \draw[black](0,0.5)rectangle(0.5,1);
        \draw[black](0.5,0)rectangle(1,0.5);
        \draw[black](0.5,0.5)rectangle(1,1);
        \draw[black](0,1)rectangle(0.5,1.5);
    \end{tikzpicture}}
    
\newcommand{\ttpmo}{\begin{tikzpicture}[scale=0.12]
        \draw[black](0,0)rectangle++(0.5,0.5);
        \draw[black](0,0.5)rectangle++(0.5,0.5);
        \draw[black](0.5,0)rectangle++(0.5,0.5);
        \draw[black](-.5,0)rectangle++(0.5,0.5);
        \draw[black](0,1)rectangle++(0.5,0.5);
        \end{tikzpicture}} 

\newcommand{\tvpmo}{\begin{tikzpicture}[scale=0.12]
        \draw[black](0,0)rectangle(0.5,0.5);
        \draw[black](0,0.5)rectangle(0.5,1);
        \draw[black](0.5,0)rectangle(1,0.5);
        \draw[black](1,0)rectangle(1.5,0.5);
        \draw[black](0,1)rectangle(0.5,1.5);
        \end{tikzpicture}}  
        
\newcommand{\txmo}{\begin{tikzpicture}[scale=.12]
        \draw[black](0,0)rectangle(0.5,0.5);
        \draw[black](0,0.5)rectangle(0.5,1);
        \draw[black](0,1)rectangle(0.5,1.5);
        \draw[black](0.5,0.5)rectangle(1,1);
        \draw[black](-0.5,0.5)rectangle(0,1);
        \end{tikzpicture}}

\newcommand{\tzpmo}{\begin{tikzpicture}[scale=0.12]
        \draw[black](0,0)rectangle(0.5,0.5);
        \draw[black](0,0.5)rectangle(0.5,1);
        \draw[black](0.5,0)rectangle(1,0.5);
        \draw[black](0,1)rectangle++(-0.5,0.5);
        \draw[black](0,1)rectangle(0.5,1.5);
        \end{tikzpicture}}

\newcommand{\tphmo}{\begin{tikzpicture}[scale=0.16,x={(90:1cm)},y={(0:1cm)}]
        \draw[black](0,0)rectangle(0.5,0.5);
        \draw[black](0,0.5)rectangle(0.5,1);
        \draw[black](0.5,0)rectangle(1,0.5);
        \draw[black](0.5,0.5)rectangle(1,1);
        \draw[black](0,1)rectangle(0.5,1.5);
        \draw[black](0,1.5)rectangle++(0.5,0.5);
    \end{tikzpicture}}

\newcommand{\tfidget}{    \resizebox{13 pt}{!}{
        \begin{tikzpicture}[scale=0.4]
        \node[ line width= 1.5pt, regular polygon, regular polygon sides=6, minimum size=0.1cm, draw]at (0,0){ };
        \node[ line width= 1.5pt, regular polygon, regular polygon sides=6, minimum size=0.1cm, draw]at (0.75,0.433){ }; 
        \node[ line width= 1.5pt, regular polygon, regular polygon sides=6, minimum size=0.1cm, draw]at (0.75,1.299){ };
        \node[ line width= 1.5pt, regular polygon, regular polygon sides=6, minimum size=0.1cm, draw]at (1.5,0){ };
    \end{tikzpicture}} }

\newcommand{\cthex}{\rotatebox{90}{\resizebox{9 pt}{!}{
\begin{tikzpicture}[scale=0.4]
\node[ line width= 1.5pt, regular polygon, regular polygon sides=6, minimum size=0.1cm, draw]at (0,0){ };
    \node[ line width= 1.5pt, regular polygon, regular polygon sides=6, minimum size=0.1cm, draw]at (0.75,0.433){ }; 
    \node[ line width= 1.5pt, regular polygon, regular polygon sides=6, minimum size=0.1cm, draw]at (0.75,1.299){ };
    \node[ line width= 1.5pt, regular polygon, regular polygon sides=6, minimum size=0.1cm, draw]at (0,1.732){ };
\end{tikzpicture}}}}

\usepackage[backend=biber,
style=alphabetic]{biblatex}
\title{Polyomino Density}
\author{D. M. Condon\thanks{St. Lawrence University (dcondon@stlawu.edu)}
  \and Eli B. Dugan\thanks{Ohio State University (dugan.248@osu.edu)}
  \and Laney M. Goldman\thanks{Georgia Institute of Technology (lgoldman6@gatech.edu)}
  \and Emily R. Williams}
\date{}

\begin{document}
\maketitle

\begin{abstract}
    A de Bruijn polyomino has colored cells and includes exactly one instance of each possible coloring of another polyomino with those colors. This generalizes the notion of a de Bruijn sequence. 
    We are interested in the de Bruijn polyominoes of minimum size.
    As a step toward finding these, we introduce the problem of finding the smallest polyominoes containing at least $N$ translated copies of the polyomino $p$, allowing overlaps. We say these are $(p,N)$-dense and have size $a_{p,N}$. For certain pairs of polyominoes $p$ and $q$ of equal size, we show there are polyomino transformations relating $a_{p,N}$ and $a_{q,N}$. Using these transformations, we identify classes of polyominoes which share the sequence $(a_{p,N})_{N=1}^\infty$. We give closed forms for $(a_{p,N})_{N=1}^\infty$ for most polyominoes with up to five cells. Leveraging these results we introduce novel de Bruijn polyominoes, as well as other colored polyforms with de Bruijn-like properties.
\end{abstract}

\section{Introduction}\label{sec:introduction}

A \bfb{polyomino} is a connected shape made from unit squares, called \bfb{cells}, glued together edge-to-edge. 
The name plays on the word \bfb{domino}, as dominoes are 2-celled polyominoes. Similarly, the terms \bfb{monomino}, \bfb{tromino}, \bfb{tetromino}, \bfb{pentomino}, and \bfb{$n$-omino} refer to polyominoes having one, three, four, five,  and $n$ cells respectively.
\Cref{fig: polyomino chart} depicts the polyominoes with up to five cells, up to congruence. Small polyominoes have standard names, which are included in the figure. 
A polyomino with cells forming a single row or column is called a \bfb{straight} polyomino. The \bfb{size} of a polyomino $p$ is its number of cells, denoted $|p|$. For example, $\big | {\sqmo} \big | = 4$ and $\big | {\ppmo} \big | = 5$.

\begin{figure}
    \centering
    \input{Images/PolyominoChart}
    \caption{There are two varieties of tromino, five varieties of tetromino, and twelve varieties of pentomino, up to congruence.}
    \label{fig: polyomino chart}
\end{figure}

A \bfb{coloring} of a polyomino by a set of colors $X$ is a way of assigning each cell of the polyomino exactly one color from $X$. For $p$ a polyomino and $X$ a set of $n$ colors, we say a polyomino $P$ colored by $X$ is \bfb{$(p,n)$-de Bruijn} if every coloring of $p$ by $X$ occurs within $P$ exactly once. See \Cref{fig:sixteen squares} as an example.

\begin{figure}
        \centering
    \begin{subfigure}{0.3\textwidth}
        \centering
        \input{Images/colorings}
        \caption*{$2$-colorings of ${\protect\sqmo}$}
        \label{fig: square colorings}
    \end{subfigure}
    \begin{subfigure}{0.3\textwidth}
        \centering
        \input{Images/DeBruijn}
        \caption*{$({\protect\sqmo}, 2)$-de Bruijn}
        \label{fig: square debruijn}
    \end{subfigure}
    \begin{subfigure}{0.3\textwidth}
        \centering
        \input{Images/Prismatic}
        \caption*{$({\protect\sqmo}, 2)$-prismatic}
        \label{fig: square prismatic}
    \end{subfigure}
\caption{There are sixteen distinct colorings of the square tetromino by a set of two colors. Each of the two large polyominoes here include exactly one instance of each of these colorings, and are therefore $({\protect\sqmo},2)$-de Bruijn. The polyomino on the right is also $({\protect\sqmo},2)$-dense, and is therefore $({\protect\sqmo},2)$-prismatic.}
    \label{fig:sixteen squares}
\end{figure}

This generalizes the notion of a \bfb{de Bruijn sequence},  which is a sequence over an alphabet $A$ that includes exactly one instance of each possible subsequence of a certain length; de Bruijn sequences can be encoded as straight de Bruijn polyominoes, as in \Cref{fig:sequence}.

\begin{figure}
    \centering
        \begin{subfigure}{0.55\textwidth}
        \centering
        \input{Images/Colorings2}
        \caption*{2-colorings of $\stmo$}
    \end{subfigure}
    \begin{subfigure}{0.40\textwidth}
        \centering
    \input{Images/DeBruijnSequence}
        \caption*{$(\stmo, 2)$-de Bruijn}
    \end{subfigure}
\caption{The sequence (1,0,0,0,1,0,1,1,1,0) is de Bruijn, because it includes each sequence of length 3 over the alphabet $\{0,1\}$ exactly once. We can encode this sequence as the straight $10$-omino above, shading the $i$th cell if the $i$th term in the sequence is a 1. The polyomino is de Bruijn because it includes exactly one instance of each way of coloring a straight tromino by these two colors. 
De Bruijn sequences are naturally cyclic, so the sequence in this example would often be written without the last two bits, which necessarily equal the first two bits. We must write the sequence out fully in order to encode it as a polyomino.} 
    \label{fig:sequence}
\end{figure}

De Bruijn polyominoes were introduced by Condon, Wang, and Yang \cite{condon2024}. 
For $N$ any positive integer, if $P$ is a polyomino of minimum size among those polyominoes containing at least $N$ instances (translated copies) of some polyomino $p$, we say that $P$ is \bfb{$(p,N)$-dense}; density is independent of coloring. 
If $P$ is a $(p,n)$-de Bruijn polyomino that is also $(p,n^{|p|})$-dense, we say that $P$ is \bfb{$(p,n)$-prismatic.}
Condon et al. searched for $(p,n)$-prismatic polyominoes by first identifying dense polyominoes, and then constructing de Bruijn colorings.

We let $a_{p,N}$ denote the size of a $(p,N)$-dense polyomino, and we call $(a_{p,N})_{N=1}^\infty$ the \bfb{instance sequence} for $p$. 
This paper mainly develops machinery for computing instance sequences and constructing dense polyominoes. 
\Cref{sec:background} contains background material. In \Cref{sec:partialorder}, we use polyomino transformations to show that some instance sequences are upper bounds on others. In \Cref{sec:easypolyominoes,sec:otherpentominoes} we give parameterized descriptions of the instance sequences for most polyominoes with up to five cells, and we give closed forms of these sequences in \Cref{sec:closedforms}.
In \Cref{sec:instancesequences}, we show that instance sequences for polyominoes are always asymptotically linear. Finally we present several novel prismatic polyominoes, as well as other colored polyforms with de Bruijn-like properties, in \Cref{sec:novelprismatic,sec:polyforms}.

\section{Background}\label{sec:background}

\subsection{Polyominoes}
The term \bfb{polyomino} was coined by Solomon W. Golomb, who introduced and compiled many entertaining polyomino tiling problems in his book \textit{Polyominoes. Puzzles, Patterns, Problems, and Packings} \cite{Golomb}. Golomb usually considers polyominoes that differ by rotations or reflections to be the same shape, as do many works dealing with polyominoes; these equivalence classes are sometimes called \bfb{free polyominoes}. 

In this paper, we deal with \bfb{fixed polyominoes}, meaning we consider two polyominoes to be the same shape if they differ by translation only; we call these two \bfb{instances} of that shape. For example, we distinguish between these two orientations of the T tetromino, and between these two colorings of the square tetromino.
\begin{center}
\begin{tikzpicture}[scale=.6]
    \draw[-] (0,0) rectangle ++ (3,1);
    \draw[-] (1,0) rectangle ++ (1,2);
    
    \draw[-] (5,1) rectangle ++ (3,1);
    \draw[-] (6,0) rectangle ++ (1,2);

    \draw[fill=black!50] (11,0) rectangle ++ (1,1);
    \draw[ultra thick] (11,0) rectangle ++ (2,2);
    \draw[-] (11,1) --++ (2,0);
    \draw[-] (12,0) --++ (0,2);

    \draw[fill=black!50] (16,0) rectangle ++ (1,1);
    \draw[ultra thick] (15,0) rectangle ++ (2,2);
    \draw[-] (15,1) --++ (2,0);
    \draw[-] (16,0) --++ (0,2);
\end{tikzpicture}
\end{center}

We regard the cells of all polyominoes as orthogonal unit squares on the Cartesian plane, with their lower left corners having integer coordinates. We consider these the coordinates of the cell. Sometimes, we find it useful to regard a polyomino instance as the set of the coordinates of its cells. See \Cref{fig:Cartesian Grid}.

\begin{figure}
    \centering
    \input{Images/CartesianGrid}
    \caption{The instance of ${\protect \ttmo}$ has coordinates $\{(0,0), (1,0), (2,0), (1,1)\}$. The left instance of ${\protect \sqmo}$ has coordinates $\{(-4,2),(-3,2),(-4,1),(-3,1)\}$ while
    the right instance has coordinates $\{(5,0), (6,0), (5,-1), (6,-1)\}$. }
    \label{fig:Cartesian Grid}
\end{figure}

A \bfb{row} of the Cartesian plane is the set of all cells sharing some $y$-coordinate, while a \bfb{column} is the set of all cells sharing some $x$-coordinate. If a polyomino intersects a row (or column), we call that intersection a row (or column) of that polyomino.

\subsection{Triangular and Polygonal Numbers}\label{sec:polygonalnumbers}
Many of the instance sequences we will describe are related to the triangular and polygonal numbers.
The \bfb{triangular numbers} are the numbers of dots needed to make filled-in triangular arrays. The $m$th triangular number is
\begin{equation*}
    t_m \coloneq  m(m+1)/2.
\end{equation*}
The polygonal numbers generalize this idea.
 
The $m$th \bfb{$n$-gonal number}, $\poly^n_m$, is the number of dots needed to draw an array of $m$ many $n$-gons, 
$G_1$, $G_2$, ..., $G_m$,
with $G_i$ having a dot at each vertex and $i$ dots along each side, so that all the $n$-gons share one vertex and the two sides incident to that vertex. 
This implies that $G_1$ has a single dot, so $\poly^n_1 = 1$ for all $n$. The arrays in \Cref{fig:polygonalnumbers} can be used to compute the first five columns in the table below.
$$\begin{array}{l|lllll}
    m & 1 & 2 & 3 & 4 & 5  \\\hline
    t_m & 1 & 3 & 6 & 10 & 15\\
    \poly^4_m & 1 & 4 & 9 & 16 & 25\\
    \poly^5_m & 1 & 5 & 12 & 22 & 35\\
    \poly^6_m & 1 & 6 & 15 & 28 & 45
\end{array}$$

\begin{figure}
    \centering
    \input{Images/PolygonalArray}
    \caption{These arrays can be used to compute the first five triangular (3-gonal), square (4-gonal), pentagonal (5-gonal), and hexagonal (6-gonal) numbers.}
    \label{fig:polygonalnumbers}
\end{figure}

In \textit{The Book of Numbers}, John H. Conway and Richard K. Guy give a visual argument that an array of $m$ many $n$-gons, such as those in \Cref{fig:polygonalnumbers}, 
can be constructed by gluing together $n-2$ arrays of $m-1$ triangles, and a single edge with $m$ dots \cite{Conway}.
This gives the following result.

\begin{lemma}\label{lem:Conway}
    For any positive integer $m$, and any integer $n \geq 3$,
    \begin{equation*}
        \poly^n_m = m + (n-2) t_{m-1}.
    \end{equation*}
\end{lemma}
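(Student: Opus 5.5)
The plan is to count dots directly from the nested-array definition of $\poly^n_m$. I will first derive a recursion $\poly^n_m = \poly^n_{m-1} + (\text{new dots in } G_m)$, then sum it. Conway and Guy's picture of gluing $n-2$ triangular arrays onto a single edge is the geometric content of this count, and I will reproduce it algebraically.

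\emph{Counting the new dots.} Passing from $G_{m-1}$ to $G_m$ (for $m\ge 2$), the polygon $G_m$ has $n$ sides, each with $m$ dots counting both endpoints. Its two sides incident to the shared vertex overlap the corresponding sides of $G_{m-1}$. Every dot of those two sides is already present except the far endpoint of each side, which is one step further out. The remaining $n-2$ sides are entirely new. A closed chain of $n-2$ sides with $m$ dots each, sharing $n-3$ interior corners, contains $(n-2)m-(n-3)$ dots. Adding the two far endpoints on the shared sides gives $(n-2)m-(n-3)+2 = (n-2)(m-1)+1$ new dots. This assumes the polygons are nested so that $G_m$ adds no dot coinciding with an earlier one apart from the shared sides, which is implicit in the definition of the array.

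\emph{Summing.} With $\poly^n_1 = 1$, summing the increments gives
\[
\poly^n_m = 1 + \sum_{k=2}^{m}\bigl((n-2)(k-1)+1\bigr) = m + (n-2)\sum_{j=1}^{m-1} j = m + (n-2)t_{m-1}.
\]
Equivalently, one can induct on $m$ using $t_{m-1} = t_{m-2} + (m-1)$. As a sanity check, the case $n=3$ gives $m + t_{m-1} = t_m$, and the values match the table.

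\emph{Main obstacle.} The hard part is justifying the increment count rigorously from the informal definition: that consecutive polygons share exactly the two sides through the common vertex, with $G_{m-1}$'s sides being initial segments of $G_m$'s, and that no other coincidences occur. I would make this precise by placing the shared vertex at the origin along two fixed rays and scaling $G_k$ by $k-1$. Then every dot of $G_k$ that is off the two rays lies on the outer boundary at scale $k-1$, and these boundaries are disjoint for distinct $k$ because polygons with the same vertex and different scales do not meet off the rays.
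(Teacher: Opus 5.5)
\textbf{The counting step fails as written.} The rest of your proposal is sound, and your route differs from the paper's, but the increment computation double counts. The $n-2$ sides of $G_m$ not incident to the shared vertex form an open path, not a closed chain. That path runs from the far endpoint of one shared side to the far endpoint of the other, so your count $(n-2)m-(n-3)$ already includes those two far endpoints. Adding them again double counts, and the identity you then assert is false:
\[
(n-2)m-(n-3)+2 = (n-2)(m-1)+3 \neq (n-2)(m-1)+1.
\]
For example, take $n=4$ and $m=2$. The two non-shared sides of the unit square carry $3$ dots, which are exactly the $3$ dots $G_2$ adds to $G_1$, but your expression gives $5$. You reached the correct increment only because the false equality cancelled the double count.

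\textbf{The fix.} Drop the ``$+2$''. The new dots are precisely the dots on the $n-2$ non-shared sides, and $(n-2)m-(n-3)=(n-2)(m-1)+1$. As a cross-check, $G_m$ has $n(m-1)$ dots. Exactly $2m-3$ of them lie on the shared sides short of the far endpoints, and these are the shared-side dots of $G_{m-1}$. That leaves $n(m-1)-(2m-3)=(n-2)(m-1)+1$ new dots. Your homothety argument in the last paragraph correctly covers the remaining point: positive homothets of a strictly convex polygon about a vertex have boundaries meeting only along the two incident rays, so no other coincidences occur. With this correction, your summation gives the lemma.

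\textbf{Comparison with the paper.} The paper gives no proof of its own; it cites Conway and Guy's picture. That picture decomposes the whole array at once, fanning from the shared vertex, into $n-2$ triangular arrays of $t_{m-1}$ dots plus one edge of $m$ dots. Your layer-by-layer count instead proves the increment $\poly^n_m-\poly^n_{m-1}=(n-2)(m-1)+1$ first and then sums. That increment is the paper's \Cref{lem:recursion}, so you reverse the paper's order of deduction, since the paper derives the recursion from this lemma. Your version works directly from the layered definition and makes the disjointness rigorous. The Conway--Guy decomposition avoids induction but relies on the picture.
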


This yields the following technical lemmas which will be useful when describing the construction of certain dense polyominoes.

\begin{lemma}\label{lem:recursion}
    For any positive integer $m$, and any integer $n \geq 3$,
    \begin{equation*}
        \poly^n_{m+1} = \poly^n_m + (n-2)m + 1.
    \end{equation*}
\end{lemma}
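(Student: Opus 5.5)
The plan is to derive the recursion directly from \Cref{lem:Conway}, applied once at index $m+1$ and once at index $m$, and then subtract. Since $m$ is a positive integer, both $m$ and $m+1$ are positive integers, so the lemma applies to each. It gives
\begin{equation*}
    \poly^n_{m+1} = (m+1) + (n-2)t_m
    \quad\text{and}\quad
    \poly^n_m = m + (n-2)t_{m-1}.
\end{equation*}

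Subtracting the second identity from the first yields
\begin{equation*}
    \poly^n_{m+1} - \poly^n_m = 1 + (n-2)\left(t_m - t_{m-1}\right).
\end{equation*}
It remains to compute the difference of consecutive triangular numbers. From the definition $t_m = m(m+1)/2$, we have $t_m - t_{m-1} = \bigl(m(m+1) - (m-1)m\bigr)/2 = m$.

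Two small points need care. First, when $m=1$ the term $t_{m-1}=t_0$ appears, and the formula gives $t_0=0$. This is consistent with \Cref{lem:Conway} at $m=1$, which returns $\poly^n_1 = 1$. Second, this is the only step that is not purely mechanical, and it is still trivial. Substituting $t_m - t_{m-1} = m$ gives $\poly^n_{m+1} = \poly^n_m + (n-2)m + 1$, as claimed.

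Alternatively, the recursion can be read off the geometric picture. Passing from $G_1,\dots,G_m$ to $G_{m+1}$ adds $n-2$ new sides, and each of those sides contributes $m$ new dots. One further dot closes the polygon, because the two sides through the shared vertex are extended. The algebraic route above is cleaner, so I would present that one.
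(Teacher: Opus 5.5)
Your proof is correct and is essentially the paper's argument. The paper says only that the identity is a straightforward calculation from \Cref{lem:Conway}, and your subtraction of the two instances of that lemma, using $t_m - t_{m-1} = m$ and $t_0 = 0$, is exactly that calculation.
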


\begin{proof}
     This is a straightforward calculation using \Cref{lem:Conway}.
\end{proof}

\begin{lemma}\label{lem:polydecompose}
    For any positive integer $N$ and positive integer $n \geq 3$, there exist a unique positive integer $m$ and unique integer $r$ such that $0 \leq r \leq (n-2)m$ and  $N = \poly^n_m + r.$
\end{lemma}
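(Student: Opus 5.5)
The plan is to show that the intervals $I_m = [\poly^n_m,\ \poly^n_m + (n-2)m]$ of integers, for $m = 1, 2, 3, \dots$, partition the positive integers. Existence and uniqueness of the pair $(m,r)$ are then immediate: $m$ is the index of the interval containing $N$, and $r = N - \poly^n_m$.

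The key observation comes from \Cref{lem:recursion}. The right endpoint of $I_m$ is
\[
\poly^n_m + (n-2)m = \poly^n_{m+1} - 1.
\]
So $I_m$ is exactly the set of integers $k$ with $\poly^n_m \le k < \poly^n_{m+1}$. Because $n \ge 3$ and $m \ge 1$, the same lemma gives $\poly^n_{m+1} - \poly^n_m = (n-2)m + 1 \ge 2 > 0$. Hence the sequence $(\poly^n_m)_{m \ge 1}$ is a strictly increasing sequence of integers with $\poly^n_1 = 1$. In particular $\poly^n_m \ge m$, so the sequence is unbounded.

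For existence, given a positive integer $N$, consider the set of $m \ge 1$ with $\poly^n_m \le N$. It is nonempty, since $\poly^n_1 = 1 \le N$. It is finite, since $\poly^n_m \ge m$ means every such $m$ satisfies $m \le N$. Let $m$ be its largest element. Then $\poly^n_m \le N < \poly^n_{m+1}$. Setting $r = N - \poly^n_m$ gives $0 \le r \le \poly^n_{m+1} - 1 - \poly^n_m = (n-2)m$, as required.

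For uniqueness, suppose $N = \poly^n_m + r = \poly^n_{m'} + r'$ with both pairs satisfying the bounds, and say $m < m'$. Strict monotonicity gives $\poly^n_{m'} \ge \poly^n_{m+1}$. The bound on $r$ gives $N \le \poly^n_{m+1} - 1$, while the bound on $r'$ gives $N \ge \poly^n_{m'}$. Together these yield $\poly^n_{m+1} \le \poly^n_{m'} \le N \le \poly^n_{m+1} - 1$, a contradiction. So $m = m'$, and then $r = r'$.

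There is no real obstacle here. The only point needing care is to use \Cref{lem:recursion} to rewrite the upper bound $(n-2)m$ on $r$ as "strictly less than $\poly^n_{m+1}$". Once that is done, the statement reduces to the elementary fact that a strictly increasing integer sequence starting at $1$ cuts the positive integers into consecutive half-open blocks.
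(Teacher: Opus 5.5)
Your proof is correct and takes the same approach as the paper. The paper simply cites that the $n$-gonal numbers form an increasing integer sequence starting at $\poly^n_1 = 1$, together with \Cref{lem:recursion}. You have written out the details it leaves implicit: rewriting $r \le (n-2)m$ as $N < \poly^n_{m+1}$, then partitioning the positive integers into consecutive blocks $[\poly^n_m, \poly^n_{m+1})$.
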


\begin{proof}
    This follows immediately from the fact that the $n$-gonal numbers are an increasing sequence of integers starting from $\poly^n_1=1$, and from \Cref{lem:recursion}.
\end{proof}

For further reading, we recommend \textit{Elementary Number Theory in Nine Chapters} by James J. Tattersall \cite{Tattersall}.

\subsection{De Bruijn Polyomino Results}

In \cite{condon2024} Condon et al. proved that $a_{\,\sqmot, n^2} = a_{\,\zmot, n^2} = (n+1)^2$, and gave a method for constructing $(\sqmo,n)$- and $(\zmo,n)$-prismatic polyominoes for all $n$.
They proved that $a_{\,\tttmo, n^2} = a_{\,\lmot, n^2} = (n+1)^2$, and constructed $(\ttmo,2)$- and $(\lmo,2)$-prismatic polyominoes. They also showed that for $p$ any straight polyomino with at least two cells, the unique shape of a $(p,N)$-dense polyomino is a straight polyomino with $|p|+N-1$ cells.

We will borrow a lemma that appeared in that paper. We provide a proof sketch for completeness.

\begin{lemma}[Appears as Lemma 2.1 in \cite{condon2024}]\label{lem:connected}
    For $N$ any positive integer and $p$ any polyomino with at least two cells,
    if $S$ is a set of cells of minimum size that contains at least $N$ instances of $p$
    then $S$ is connected.
\end{lemma}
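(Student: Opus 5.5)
We argue by contradiction. Suppose $S$ is a minimum-size set of cells containing at least $N$ instances of $p$, and that $S$ is disconnected. Split $S$ into two nonempty parts $A$ and $B$ with no edge between them; for instance, take $A$ to be one connected component and $B$ the rest. Each instance of $p$ contained in $S$ is a connected set of cells, so it lies entirely in one component of $S$. Hence it lies entirely in $A$ or entirely in $B$, and the number of instances in $S$ is the number in $A$ plus the number in $B$.

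The idea is to translate $B$ toward $A$ until the two parts become edge-adjacent but do not overlap, and then delete a cell. Concretely, translate $B$ horizontally by a vector $(k,0)$. For very negative $k$ the translated set lies far to the left of $A$, so $A$ and $B+(k,0)$ are disjoint and non-adjacent.

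1. Increase $k$ one step at a time. Let $k_0$ be the first value at which $A$ and $B+(k_0,0)$ either share an edge or intersect.
2. They cannot intersect at $k_0$ without having been edge-adjacent at $k_0-1$. If a cell $c$ lies in both $A$ and $B+(k_0,0)$, then $c$ and the cell $c-(1,0)$ of $B+(k_0-1,0)$ share an edge. This contradicts the minimality of $k_0$.
3. So $T = A \cup (B+(k_0,0))$ is a disjoint union with $|T| = |S|$. It is connected across the two parts and still contains at least $N$ instances of $p$, since instances are preserved under translation.

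We now remove one cell from $T$ without losing any instance. Pick a pair of adjacent cells $a \in A$ and $b \in B+(k_0,0)$ sharing an edge. Consider the leftmost cell of $T$ in the lowest row of $T$ (lexicographic minimum); it is not yet clear this works.

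A cleaner route is to exploit the freedom of the translation. Translate $B$ by a vector chosen so that some instance of $p$ gains a new cell. We want to find a set of size $|S|-1$ with $\ge N$ instances, or a set of size $|S|$ with $\ge N+1$ instances. The latter also contradicts minimality, after removing an extreme cell, since such a cell lies in only boundedly many instances; but this needs care.

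The simplest rigorous version I would try is the following. Since $|p|\ge 2$ and $p$ is connected, $p$ contains two cells differing by a unit vector $u$. Choose the translation so that $A$ and the translated $B$ overlap in exactly one cell. That is, pick an extreme cell $x$ of $A$ (say maximal in some linear order that is generic with respect to the lattice) and an extreme cell $y$ of $B$ (minimal in the same order), and translate $B$ by $x-y$. Genericity of the order (e.g.\ lexicographic) guarantees $A \cap (B+x-y) = \{x\}$. The union $T$ then has $|S|-1$ cells.

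Every instance in $A$ or in $B$ survives in $T$. Distinct instances remain distinct unless an instance from $A$ coincides with a translated instance from $B$. Such an instance would lie in $A\cap(B+x-y)=\{x\}$, which is impossible since $|p|\ge2$. Hence $T$ contains at least $N$ instances of $p$ and has fewer cells than $S$, contradicting minimality. Therefore $S$ is connected.

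The main obstacle is verifying that, under a lexicographic order, the maximal cell of $A$ and the minimal cell of $B+(x-y)$ coincide and are the only common cell. This holds because every other cell of $A$ is strictly smaller than $x$, and every other cell of the translate is strictly larger than $x$. This is exactly where $|p|\ge 2$ is used: it prevents double counting.
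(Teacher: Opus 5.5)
Your final argument (the ``simplest rigorous version'') is correct. It follows the same overall strategy as the paper's sketch. You split $S$ into one component $A$ and the rest $B$, note that every instance of $p$ lies wholly in $A$ or wholly in $B$, and translate $B$ so that it overlaps $A$ in a region too small to contain an instance of $p$. This strictly lowers the cell count without losing instances.

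The difference is in how the overlap is controlled:
\begin{itemize}
\item The paper assumes without loss of generality that $p$ spans at least two columns (otherwise it spans two rows). It slides the components so that they share a single column, implicitly choosing the vertical offset so that at least one cell actually coincides. Double counting is excluded because no instance fits in one column.
\item You match the lexicographic maximum $x$ of $A$ with the lexicographic minimum $y$ of $B$. Since lexicographic order is a translation-invariant total order, $A\cap(B+x-y)=\{x\}$ exactly. The cell count therefore drops by precisely one, and double counting is excluded using only $|p|\ge 2$.
\end{itemize}

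Your version avoids the rows/columns case split and the unstated alignment step. It also carries over verbatim to any setting with a translation-invariant total order on cells, such as polycubes in any dimension. The paper's version is more visual and may remove several cells at once, but that gain is irrelevant for the lemma.

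In a write-up, delete the two abandoned attempts: the slide-until-adjacent construction and the route that tries to gain an extra instance. Neither is needed. Also tidy your last paragraph: the uniqueness of the common cell comes from the order alone, and $|p|\ge 2$ is used only to rule out an instance lying in $\{x\}$.
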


\begin{proof}[Proof Sketch]
    It suffices to show that for $S$ any disconnected set of cells, there exists a smaller set of cells having at least as many instances of $p$. Without a loss of generality, we assume that $p$ has at least two columns. We construct this set by translating the connected components of $S$ so that two of them overlap in a single column. This decreases the total number of cells. Since $p$ has multiple columns, none of the instances of $p$ from either component translate to the same position in the new arrangement, which must then have at least as many instances of $p$ as $S$.
\end{proof}

\section{A Partial Ordering of Polyominoes}\label{sec:partialorder}

In this section, we discuss transformations of polyominoes (and other sets of cells) that for some pairs of polyominoes $p$ and $q$ show that $a_{p,N} \geq a_{q,N}$ for every positive integer $N$. 
This allows upper and lower bounds for multiple instance sequences to follow from a single argument.

\subsection{The Row Shift Transformation}

We let $\rho$ denote the transformation that shifts each row of a set of cells to the right by one cell relative to the row immediately above it, as depicted in \Cref{fig: shifts}. We call this the \bfb{row shift} transformation. We note that the transformation acts cell-wise and is invertible.

\begin{figure}
    \centering
    \begin{subfigure}{0.3\textwidth}
        \centering        \input{Images/shiftTransformation3}
    \end{subfigure}
    \begin{subfigure}{0.3\textwidth}
        \centering        \input{Images/shiftTransformation2}
    \end{subfigure}
    \begin{subfigure}{0.3\textwidth}
        \centering        \input{Images/shiftTransformation4}
    \end{subfigure}
    \caption{The row shift transformation.}
    \label{fig: shifts}
\end{figure}

Although the row shift sometimes transforms a polyomino into a disconnected set of cells, it often maps dense polyominoes to dense polyominoes, as described by the following lemma. The idea is implicit in the work of Condon et al. \cite{condon2024}. 

\begin{lemma}\label{lem:shiftorder}
    For any positive integer $N$ and polyomino $p$ such that $q = \rho(p)$ is a polyomino and $|p| > 1$, if $P$ is a $(p,N)$-dense polyomino then $Q = \rho(P)$ is a $(q,N)$-dense polyomino. Furthermore, $a_{p,N} = a_{q,N}$. 
\end{lemma}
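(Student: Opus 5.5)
The plan is to exploit the fact that $\rho$ is (the restriction to cells of) an invertible linear map of the integer lattice. Concretely, I would write $\rho(x,y) = (x - y, y)$, up to the choice of sign convention for "shifting right relative to the row above". Since $\rho$ is linear and bijective on $\mathbb{Z}^2$, it commutes with translations in the form $\rho(S + v) = \rho(S) + \rho(v)$. It also preserves the cardinality of any finite set of cells.

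The first step is the key combinatorial claim: for any finite set of cells $S$, the instances of $p$ contained in $S$ are in bijection with the instances of $q = \rho(p)$ contained in $\rho(S)$. Indeed, $p + v \subseteq S$ if and only if $\rho(p) + \rho(v) \subseteq \rho(S)$. As $v$ ranges over $\mathbb{Z}^2$, so does $\rho(v)$, and $\rho^{-1}$ gives the inverse correspondence. It follows that $S$ contains at least $N$ instances of $p$ if and only if $\rho(S)$ contains at least $N$ instances of $q$, and $|\rho(S)| = |S|$.

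Next I would pass through arbitrary sets of cells, because $\rho(P)$ need not be connected a priori. Let $b_{p,N}$ denote the minimum size of a (not necessarily connected) set of cells containing at least $N$ instances of $p$. By the first step, $\rho$ is a size-preserving bijection between the sets counted by $b_{p,N}$ and those counted by $b_{q,N}$, so $b_{p,N} = b_{q,N}$. Since $|p| > 1$ and $|q| = |p| > 1$, \Cref{lem:connected} shows that every minimizing set is connected, i.e.\ a polyomino. Hence $b_{p,N} = a_{p,N}$ and $b_{q,N} = a_{q,N}$, which gives $a_{p,N} = a_{q,N}$.

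Finally, let $P$ be $(p,N)$-dense. Then $Q = \rho(P)$ contains at least $N$ instances of $q$ and has size $a_{p,N} = b_{q,N}$, so it is a minimum-size set for $q$. By \Cref{lem:connected}, $Q$ is therefore connected, hence a polyomino, and so it is $(q,N)$-dense.

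The only delicate point is the connectivity of $\rho(P)$, since the row shift can disconnect sets. This is exactly why the argument detours through $b_{p,N}$ and invokes \Cref{lem:connected}, and it is where the hypothesis $|p| > 1$ is used.
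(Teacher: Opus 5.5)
Your proposal is correct and takes essentially the same approach as the paper. Both arguments rest on the cell-wise correspondence between instances of $p$ in $S$ and instances of $q$ in $\rho(S)$ (and back via $\rho^{-1}$), combined with \Cref{lem:connected}, which moves from arbitrary sets of cells back to polyominoes, both for the equality $a_{p,N}=a_{q,N}$ and for the connectivity of $\rho(P)$. Your auxiliary minimum $b_{p,N}$ over arbitrary sets of cells just makes explicit what the paper compresses into ``by \Cref{lem:connected}, $a_{q,N}\le a_{p,N}$'' and ``by symmetry.''
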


\begin{proof}
    For any set of cells $S$, each instance of $p$ in $S$ is transformed cell-wise into an instance of $q$ in $\rho(S)$. Symmetrically, any instance of $q$ is transformed into an instance of $p$ in $\rho^{-1}(S)$.

    Thus $Q$ is a set of cells of size $a_{p,N}$ containing at least $N$ instances of $q$. By \Cref{lem:connected}, $a_{q,N} \leq a_{p,N}$. By symmetry, $a_{p,N} \leq a_{q,N}$. Therefore $a_{p,N} = a_{q,N}$ and $Q$ is a $(q,N)$-dense polyomino by \Cref{lem:connected}.
\end{proof}

\subsection{The Column Slide Transformation}
We say a polyomino is \bfb{bottom-aligned} if all of its columns are convex (having no gaps) and the bottom-most cells in all of its columns belong to the same row.

We let $\sigma$ denote the transformation that maps a polyomino to the bottom-aligned polyomino with the same number of columns and cells in each column, as depicted in \Cref{fig: slides}. We call this the \bfb{column slide} transformation. 
Unlike the row shift, the column slide always maps polyominoes to polyominoes. 

\begin{figure}
    \centering
    \begin{subfigure}{0.3\textwidth}
        \centering        \input{Images/slideTransformation3}
    \end{subfigure}
    \begin{subfigure}{0.3\textwidth}
        \centering        \input{Images/slideTransformation2}
    \end{subfigure}
    \begin{subfigure}{0.35\textwidth}
        \centering        \input{Images/slideTransformation4}
    \end{subfigure}
    \caption{The column slide transformation. It is helpful to imagine the cells of each column sliding down until resting upon the dotted line.}
    \label{fig: slides}
\end{figure}

We will show that if $P$ contains $N$ instances of a polyomino $p$, then $\sigma(P)$ contains at least $N$ instances of $\sigma(p)$, which shows that $a_{p,N} \geq a_{\sigma(p), N}$ for every polyomino $p$ and positive integer $N$.

\begin{lemma}\label{lem: slide instances}
    For any polyominoes $P$ and $p$, the number of instances of $\sigma(p)$ in $\sigma(P)$ is at least the number of instances of $p$ in $P.$ 
\end{lemma}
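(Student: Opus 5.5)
The plan is to count instances one horizontal offset at a time, using only the column heights of $P$ and of $p$.

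\emph{Setup.} Suppose $p$ occupies $w$ consecutive columns. These columns are consecutive because $p$ is connected, so no column strictly between its leftmost and rightmost column is empty. Index them $j=0,\dots,w-1$, and let $C_j$ be the set of $y$-coordinates of the cells of $p$ in column $j$, with $h_j=|C_j|\ge 1$. For the host $P$, let $D_c$ be the set of $y$-coordinates of the cells of $P$ in column $c$, with $H_c=|D_c|$ (taking $H_c=0$ if the column is empty). By definition of $\sigma$, the polyomino $\sigma(p)$ has column $j$ equal to $\{0,\dots,h_j-1\}$. After a translation, $\sigma(P)$ has column $c$ equal to $\{0,\dots,H_c-1\}$.

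\emph{Step 1: count instances of $\sigma(p)$ at a fixed offset.} Fix a horizontal offset $x$. A translate of $\sigma(p)$ by $(x,y)$ lies in $\sigma(P)$ exactly when $y\ge 0$ and $y+h_j\le H_{x+j}$ for every $j$. So the number of such instances is
\begin{equation*}
\max\Bigl(0,\ \min_{j}\bigl(H_{x+j}-h_j\bigr)+1\Bigr).
\end{equation*}

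\emph{Step 2: bound instances of $p$ at the same offset.} A translate of $p$ by $(x,y)$ lies in $P$ only if $C_j+y\subseteq D_{x+j}$ for every $j$. So it suffices to prove a one-dimensional claim: for finite sets $C,D\subset\mathbb{Z}$ with $|C|=h\ge1$ and $|D|=H$, the set $Y=\{y : C+y\subseteq D\}$ has at most $\max(0,H-h+1)$ elements.

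To prove the claim, assume $Y$ is nonempty and list it as $y_1<\dots<y_k$. Let $m=\min C$. Consider the elements $y_i+m$ for $i=1,\dots,k$, together with the elements $y_k+c$ for $c\in C\setminus\{m\}$.
\begin{itemize}
\item All of them lie in $D$.
\item They are pairwise distinct. The first family is strictly increasing in $i$, and each $y_k+c$ with $c>m$ exceeds $y_k+m$, which is the largest element of the first family.
\end{itemize}
Hence $k+h-1\le H$, which is the claim.

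Applying the claim column by column, the number of instances of $p$ in $P$ at offset $x$ is at most $\max(0,H_{x+j}-h_j+1)$ for each $j$, hence at most the minimum over $j$. By Step 1, that minimum is exactly the number of instances of $\sigma(p)$ in $\sigma(P)$ at offset $x$.

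\emph{Step 3: sum over offsets.} Summing over all horizontal offsets $x$ gives the lemma. The only point needing care is Step 2, because the columns of $p$ and $P$ need not be convex, so the count of vertical translates cannot be read off directly. The distinctness argument above handles this without any convexity assumption.
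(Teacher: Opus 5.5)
Your proof is correct and follows essentially the same route as the paper: group instances by horizontal offset (the paper's column windows $P_{[k,k+w-1]}$), bound the count at each offset by $\min_j(H_{x+j}-h_j+1)$ by looking at where the bottom cell of each column of $p$ can sit, note that the bottom-aligned $\sigma(p)$ in $\sigma(P)$ attains exactly this bound, and sum over offsets. Your explicit $\max(0,\cdot)$ clamp and your distinctness argument for non-convex columns are somewhat more careful than the paper's write-up, whose displayed bound $\min_i(C_{k+i-1}-c_i+1)$ can be negative when some column of $P$ is too short.
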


\begin{proof}
    For any polyominoes $X$ and $Y$, let $I(X,Y)$ denote the number of instances of $Y$ in $X$. For $i \leq j$,  $X_{[i,j]}$ denote the polyomino formed by the $i$th through $j$th columns of $X$, indexed from left to right starting from 1.
    
    Let $w$ be the number of columns of $p$ and let $W$ be the number of columns of $P$. Since each instance of $p$ in $P$ spans exactly $w$ consecutive columns, these instances can be grouped by which columns they occupy,
    $$I(P,p) = \sum_{k=1}^{W-w+1} I(P_{[k, k+w-1]},p).$$

    Let $c_i$ and $C_i$ denote the number of cells in the $i$th columns of $p$ and $P$ respectively, indexed from left to right. Note that every $P_{[k, k+w-1]}$ contains at most $C_{k-1+i} - c_i + 1$ instances of $p$ for any $i = 1, ..., w,$ since this is the maximum number of distinct positions in which the $i$th column of $p$ can fit within the $i$th column of $P_{[k, k+w-1]}$; this can be counted by considering the valid positions of the bottom cell of the $i$th column of $p$, which must be within the $i$th column of $P_{[k, k+1-1]}$ but not among its top $c_{i}-1$ cells. Therefore
    $$I(P_{[k,k+w-1]},p) \leq \min_{i=1,...,w}(C_{k+i-1} -c_i + 1) \quad \text{for } k = 1,...,W-w+1,$$
    so
    $$I(P,p) = \sum_{k=1}^{W-w+1}I(P_{[k,k+w-1]},p) \leq \sum_{k=1}^{W-w+1}\min_{i=1,...,w}(C_{k+i-1} -c_i + 1).$$

    Now let $q = \sigma(p)$ and $Q = \sigma(P)$. Because $q$ and $Q$ are bottom-aligned, the number of instances of $q$ within $Q_{[k,k+w-1]}$ is exactly $\min_{i=1,...,w}(C_{k+i-1} -c_i + 1)$, these being the translations of $q$ from its lowest possible position in $Q_{[k,k+w-1]}$ to its highest possible position. It follows that
    $$I(P,p) \leq \sum_{k=1}^{W-w+1}\min_{i=1,...,w}(C_{k+i-1} -c_i + 1) = \sum_{k=1}^{W-w+1}I(Q_{[k,k+w-1]},q) = I(Q,q).$$
    Therefore $I(Q,q)$, which is the number of instances of $\sigma(p)$ in $\sigma(P)$, is at least $I(P,p)$, which is the number of instances of $p$ in $P$.
\end{proof}

\begin{lemma}\label{thm:slideorder}
    For any polyomino $p$ and positive integer $N$, $a_{p,N} \geq a_{\sigma(p), N}$.
\end{lemma}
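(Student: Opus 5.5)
The plan is to apply \Cref{lem: slide instances} to a $(p,N)$-dense polyomino and then compare sizes. Let $P$ be a $(p,N)$-dense polyomino, so $|P| = a_{p,N}$ and $P$ contains at least $N$ instances of $p$. By \Cref{lem: slide instances}, $\sigma(P)$ contains at least as many instances of $\sigma(p)$ as $P$ contains instances of $p$, hence at least $N$ of them. As noted where $\sigma$ is defined, $\sigma(P)$ is again a polyomino.

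The next step is to check that $\sigma$ preserves size. By definition $\sigma(P)$ has the same number of columns as $P$, with the same number of cells in each column, so $|\sigma(P)| = |P| = a_{p,N}$. Thus $\sigma(P)$ is a polyomino of size $a_{p,N}$ containing at least $N$ instances of $\sigma(p)$.

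Since $a_{\sigma(p),N}$ is the minimum size of a polyomino containing at least $N$ instances of $\sigma(p)$, we get $a_{\sigma(p),N} \le a_{p,N}$. No appeal to \Cref{lem:connected} is needed, because $\sigma(P)$ is already connected and dense polyominoes are defined as minimizers among polyominoes.

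There is essentially no obstacle here. The only points to verify are that $\sigma(P)$ is connected and that $|\sigma(P)| = |P|$. Both are immediate: $\sigma(P)$ is bottom-aligned with nonempty convex columns at consecutive $x$-coordinates, so it is connected, and $\sigma$ preserves every column count. The monomino case $|p|=1$ is trivial, since then $\sigma(p)=p$.
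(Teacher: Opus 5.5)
Your proof is correct and follows the paper's argument: take a $(p,N)$-dense $P$, apply \Cref{lem: slide instances} to get at least $N$ instances of $\sigma(p)$ in $\sigma(P)$, and use $|\sigma(P)| = |P|$. Your extra check that $\sigma(P)$ is connected, which makes \Cref{lem:connected} unnecessary, is accurate; the paper leaves this implicit, having noted earlier that the column slide always produces a polyomino.
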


\begin{proof}
    Let $P$ be a $(p,N)$-dense polyomino. By \Cref{lem: slide instances}, $\sigma(P)$ contains at least $N$ instances of $\sigma(p)$, so $\abs{\sigma(P)} \geq a_{\sigma(p),N}$. Since the column slide preserves the number of cells in the polyomino, 
    $a_{p,N} = \abs{P} =  \abs{\sigma(P)} \geq a_{\sigma(p), N}.$
\end{proof}

We say a polyomino is \bfb{left-aligned} if all of its rows are convex and the left-most cells in all of its rows belong to the same column. 
We say a polyomino is \bfb{corner-aligned} if it is both left-aligned and bottom-aligned.

\begin{lemma}\label{lem:corner-aligned}
    If $P$ is a left-aligned polyomino, then $\sigma(P)$ is corner-aligned.
\end{lemma}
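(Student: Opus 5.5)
Since $\sigma(P)$ is bottom-aligned by definition, it suffices to show that $\sigma(P)$ is also left-aligned: every row of $\sigma(P)$ is convex, and all rows begin in the same column. The plan is to describe $\sigma(P)$ directly in coordinates and then read off its rows. Place $P$ so that its left-aligned edge lies in column $0$. Let the columns of $P$ be indexed $0,1,\dots,W-1$, and let $C_i$ be the number of cells of $P$ in column $i$. After applying $\sigma$ (and translating so the common bottom row is row $0$), $\sigma(P)$ is the set of cells $(i,y)$ with $0 \le i \le W-1$ and $0 \le y \le C_i - 1$.

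The key step is to show that $C_0 \ge C_1 \ge \cdots \ge C_{W-1}$. Because $P$ is left-aligned, each row of $P$ is a convex interval of cells starting at column $0$, say occupying columns $0,\dots,\ell_r - 1$ in row $r$. Then $C_i$ is the number of rows $r$ with $\ell_r > i$. This count is visibly nonincreasing in $i$, since $\{r : \ell_r > i+1\} \subseteq \{r : \ell_r > i\}$. This is the only place the hypothesis is really used, and it is the one step requiring care: we must note that convexity of rows is what lets us express column membership as $\ell_r > i$. Without it, a row could skip column $i$ but include column $i+1$.

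With the monotonicity in hand, the row at height $y$ of $\sigma(P)$ consists of the cells $(i,y)$ with $C_i > y$. Since $(C_i)$ is nonincreasing, this set is an initial segment $\{0,1,\dots,k_y-1\}$ of the columns. It is nonempty for $0 \le y \le C_0 - 1$ and empty above. Hence every row is convex and begins in column $0$, so $\sigma(P)$ is left-aligned. Together with bottom-alignment, $\sigma(P)$ is corner-aligned; in fact it is the Young-diagram shape with column lengths $C_0 \ge C_1 \ge \cdots$. There is no real obstacle beyond setting up the coordinates cleanly. One minor point to mention is that $P$ has no empty columns between its leftmost and rightmost columns, because it is connected; this ensures the column indices are consecutive.
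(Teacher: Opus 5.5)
Your proof is correct and follows the same route as the paper. Left-alignment of $P$ makes each column count $C_i$ equal to the number of rows of length greater than $i$, so the counts are nonincreasing, and bottom-alignment of $\sigma(P)$ then forces every row to be an initial segment starting in the leftmost column; your coordinate setup simply makes the paper's brief argument explicit.
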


\begin{proof}
    Since $\sigma(P)$ is bottom-aligned by definition, it remains to show $\sigma(P)$ is left-aligned. 
    Because $P$ is left-aligned, the number of cells in the $i$th column of $P$ (indexing from left to right) is the same as the number of rows having at least $i$ cells. Therefore the column sizes of $\sigma(P)$ are nonincreasing from left to right. Because $\sigma(P)$ is bottom-aligned, this guarantees each row has a cell in the left-most column, and that there are no gaps in rows, so $\sigma(P)$ is left-aligned.
\end{proof}

We can similarly define a row slide function $\sigma'$ that maps a polyomino to the left-aligned polyomino with the same number of rows and cells in each row. By symmetry, all of the previous results in this section have analogs which apply to $\sigma'$. 

\begin{theorem}\label{thm:corner-aligned}
    For any positive integer $N$ and any polyomino $p$ that is bottom-, left-, or corner-aligned, there exists a $(p,N)$-dense polyomino that is also bottom-, left-, or corner-aligned respectively.
\end{theorem}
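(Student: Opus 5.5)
Take a $(p,N)$-dense polyomino $P$ and apply the column slide $\sigma$, the row slide $\sigma'$, or both, choosing the transformation to match the alignment property assumed for $p$. The key point is that when $p$ is already bottom-aligned we have $\sigma(p)=p$. This holds because a bottom-aligned polyomino has convex columns whose bottom cells share a row, so it is already the unique bottom-aligned polyomino with its column sizes, up to translation. Since the paper works with fixed polyominoes, this equality is only up to translation, which is all we need.

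\emph{Bottom-aligned case.} Suppose $p$ is bottom-aligned and $P$ is $(p,N)$-dense. By \Cref{lem: slide instances}, $\sigma(P)$ contains at least as many instances of $\sigma(p)=p$ as $P$ contains instances of $p$, hence at least $N$. The column slide preserves the number of cells, so $|\sigma(P)|=|P|=a_{p,N}$. The column slide also always produces a polyomino. It follows that $\sigma(P)$ is a polyomino of minimum size containing at least $N$ instances of $p$, so it is $(p,N)$-dense, and it is bottom-aligned by construction.

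\emph{Left-aligned case.} This follows in the same way from the row-slide analog of \Cref{lem: slide instances}. The paper notes that analog holds by symmetry, obtained by reflecting across the diagonal, which exchanges rows with columns and $\sigma$ with $\sigma'$. Since $\sigma'(p)=p$ for left-aligned $p$, the polyomino $\sigma'(P)$ is $(p,N)$-dense and left-aligned.

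\emph{Corner-aligned case.} Suppose $p$ is corner-aligned, so $\sigma(p)=\sigma'(p)=p$. Start from a dense $P$ and apply the left-aligned case to get a $(p,N)$-dense, left-aligned polyomino $P'=\sigma'(P)$. Then apply the bottom-aligned argument to $P'$. This shows $\sigma(P')$ is $(p,N)$-dense, and by \Cref{lem:corner-aligned} it is corner-aligned, since $P'$ is left-aligned.

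The only point needing care is checking that $\sigma(p)=p$ up to translation and that the instance count is translation-invariant. With that settled, the rest is bookkeeping with the cited lemmas. The composition order in the corner case matters, because \Cref{lem:corner-aligned} requires the row slide first and then the column slide.
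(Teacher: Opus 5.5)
Your proposal is correct and follows the paper's proof essentially step for step. You apply $\sigma$, $\sigma'$, or $\sigma\circ\sigma'$ to a dense $P$, get density from \Cref{lem: slide instances} (with $\sigma(p)=p$, resp.\ $\sigma'(p)=p$) together with cell-count preservation, and get corner-alignment from \Cref{lem:corner-aligned}; your remarks on translation invariance, connectedness of $\sigma(P)$, and the order of composition are accurate details the paper leaves implicit.
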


\begin{proof}
     Let $P$ be any $(p,N)$-dense polyomino. In the case that $p$ is bottom-aligned, note that $\sigma(P)$ contains at least as many instances of $\sigma(p)=p$ as $P$ by \Cref{lem: slide instances}. The density of $\sigma(P)$ then follows from the density of $P$.
     Similarly, if $p$ is left-aligned then $\sigma'(P)$ is $(p,N)$-dense as well as left-aligned.

    In the case that $p$ is corner-aligned, $\sigma'(P)$, and $\sigma(\sigma'(P))$ are $(p,N)$-dense by the preceding argument. Since $\sigma'(P)$ is left-aligned, $\sigma(\sigma'(P))$ is corner-aligned by \Cref{lem:corner-aligned}.
\end{proof}

\subsection{Relating Instance Sequences}\label{sec:relatinginstancesequences}

Using multiple row shifts and column slides, we can relate the instance sequences for more polyominoes. For example, 
\begin{samepage}
$$\sqmo \xleftrightarrow{\text{row shift}} \zmo \xrightarrow{\text{column slide}} \ttmo \xleftrightarrow{\text{row shift}} \lmo$$
and so 
\begin{equation}\label{eq:tetrominoranking}
    a_{\,\sqmot,N} = a_{\,\zmot,N} \geq a_{\,\tttmo,N} = a_{\,\lmot,N}
\end{equation}\end{samepage}

\noindent
for every positive integer $N$ by \Cref{lem:shiftorder} and \Cref{thm:slideorder}. Analogous information for pentominoes is displayed in \Cref{fig:posets}. 

\begin{figure}
    \centering
        \input{Images/Pentomino_Poset_white}
        
    \caption{Polyominoes grouped in a box are related by row shifts and rotations, and therefore have the same instance sequence. An arrow indicates that one group of polyominoes maps to another under a column slide (allowing rotations).}
    \label{fig:posets}
\end{figure}

\section{The L Tromino, Tetrominoes, and L, N, P, W, and Y Pentominoes}\label{sec:easypolyominoes}
In this section, we will construct several polyominoes that give upper bounds for $a_{\,\tmot,N}$, $a_{\,\sqmot,N}$, and $a_{\,\tppmo,N}$ respectively. We then show that these are equal to lower bounds for $a_{\,\tmot,N}$, $a_{\,\lmot,N}$, and $a_{\,\tlpmo,N}$ respectively. Combining these bounds with the result from \Cref{sec:relatinginstancesequences} that
$$a_{\,\sqmot,N} = a_{\,\zmot,N} \geq a_{\,\tttmo,N} = a_{\,\lmot,N}$$
and the analogous information for pentominoes from \Cref{fig:posets}, we find instance sequences for $\tmo$, $\sqmo$, $\zmo$, $\ttmo$, $\lmo$, $\lpmo$,$\smo$, $\ppmo$, $\wmo$, and $\ypmo$.

\subsection{Polyomino Addition}

There is often a striking resemblance between a polyomino $p$ and certain $(p,N)$-dense polyominoes, as depicted in \Cref{fig: self-similarity}. 
Each dense polyomino in the figure can be constructed by a process we call \bfb{polyomino addition}.

\begin{figure}
    \centering
    \begin{subfigure}{0.3\textwidth}
        \centering
        \input{Images/petiteTrominoIsland}
        \caption*{$(\tmo, 15)$-dense}
    \end{subfigure}
    \begin{subfigure}{0.3\textwidth}
        \centering
        \input{Images/petiteTetrominoIsland}
        \caption*{$(\ttmo, 16)$-dense}
    \end{subfigure}
    \begin{subfigure}{0.3\textwidth}
        \centering
        \input{Images/petitePentominoIsland}
        \caption*{$(\wmo, 12)$-dense}
    \end{subfigure}
    \caption{Some $(p,N)$-dense polyominoes which resemble $p$. 
    We note that $15$ is the fifth triangular number, $16$ is the fourth square number, and $12$ is the third pentagonal number.} 
    \label{fig: self-similarity}
\end{figure}

Geometrically, the sum of two polyomino shapes $p + q$ can be found by taking the union of all those instances of~$q$ for which some fixed cell in~$q$ aligns with a cell in some fixed instance of~$p$. For example, $\tmo + \dmo = \ppmo$, as shown in \Cref{fig: polyomino sum}.

\begin{figure}
    \centering
    \input{Images/sumexample}
    \caption{We find the shape of ${\protect \tmo}+{\protect\dmo}$ by taking the union of each instance of ${\protect \dmo}$ such that its left cell aligns with a cell in a fixed instance of ${\protect\tmo}$. We shade the cells of the fixed instance of ${\protect \tmo}$ for reference.}
    \label{fig: polyomino sum}
\end{figure}

An alternative perspective is to associate each polyomino with the set of Cartesian coordinates of its cells. The polyomino sum is then the sumset of these coordinates as vectors. 
For example, $\{(0,0), (1,0), (0,1)$\} is an instance of $\tmo$ and $\{(0,0), (1,0)\}$ is an instance of $\dmo$, and their polyomino sum 
$$\{(0,0), (1,0), (0,1)\} + \{(0,0), (1,0)\} = \{(0,0), (1,0), (0,1), (2,0), (1,1)\}$$
is an instance of $\ppmo$. With this approach it is clear polyomino addition is associative and commutative.

Borrowing notation from additive combinatorics, for any polyomino $p$ and positive integer $m$ we define $mp$ to be the sum of $p$ with itself $m$ times,
$$m p:=  \underbrace{p + p + ... + p}_\text{\text{$m$ terms}}.$$ 
We call $mp$ a \bfb{multiple} of $p$. Each $(p,N)$-dense polyomino in \Cref{fig: self-similarity} is a multiple of $p$.

As we will see, $mp$ is often $(p, f(m))$-dense, where $f$ is some increasing function of $m$. This is useful in constructing the subsequence $(a_{p, f(m)})_{m=1}^\infty$ of the instance sequence for $p$. 
For $N$ an integer with $f(m) < N < f(m+1)$ for some $m$, we can often construct a $(p,N)$-dense polyomino by strategically adding cells to $m p$.
Interestingly, $f(m)$ is often simply $\poly^{|p|}_m$, the $m$th $|p|$-gonal number. Examples of this are noted in \Cref{fig: self-similarity}.

\subsection{A Construction for L Trominoes}\label{sec:constructiontrominoes}

Our construction of dense polyominoes for L trominoes involves the triangular numbers.
Consider the following multiples of $\tmo$.
\begin{center}
\begin{tabular}{llll}
    \zig 2 & \zig 3 & \zig 4 & \zig 5\\
    $1\ \tmo$ & $2\ \tmo$ & $3\ \tmo$ & $4\ \tmo$
\end{tabular}
\end{center}
Note that the number of cells in $m \,\tmo$ is the same as the number of dots in the triangular array used to compute $t_{m+1}$ (see \Cref{fig:polygonalnumbers}).
Each cell in $m\,\tmo$ corresponds to an instance of $\tmo$ in $(m+1)\,\tmo$, so in general $m\,\tmo$ has at least $t_m$ instances of $p$ and has $t_{m+1}$ cells.
This shows that
$$a_{\,\tmot, t_m} \leq t_{m+1}.$$

\begin{figure}
    \centering
    \input{Images/StairClimb}
    \caption{From left to right, these polyominoes respectively contain $t_4=10$, 11, 12, 13, 14, and $t_5 = 15$ instances of ${\protect \tmo}$. }
    \label{fig:stairclimb}
\end{figure}

We can generalize this upper bound by adding cells at the right end of the bottom rows of $m\,\tmo$ in order to efficiently create more instances of $\tmo$, as in \Cref{fig:stairclimb}.
The general upper bound is described by the following lemma.

\begin{lemma}\label{thm:tmo_upper_bound}
    For any positive integer $m$ and integer $r$ such that $0 \leq r \leq m$,
    $$a_{\,\tmot, t_{m}+r} \leq t_{m+1} + r+ \ceil{ \frac rm}.$$
\end{lemma}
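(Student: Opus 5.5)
The plan is to build an explicit polyomino for each pair $(m,r)$. It will have $t_{m+1}+r+\ceil{r/m}$ cells and contain at least $t_m+r$ instances of $\tmo$. Since $a_{\,\tmot,N}$ is the minimum size of a polyomino with at least $N$ instances, and the instance count only needs to reach $N$, this gives the bound. Note that $\ceil{r/m}$ is $0$ when $r=0$ and $1$ when $1\le r\le m$, so there are only two cases.

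\textbf{Coordinates.} Take $\tmo=\{(0,0),(1,0),(0,1)\}$. Then
$$m\,\tmo=\{(x,y): x,y\ge 0,\ x+y\le m\}.$$
This staircase has $t_{m+1}$ cells. An instance of $\tmo$ with corner cell at $(a,b)$ lies in it exactly when $a,b\ge 0$ and $a+b\le m-1$. Hence $m\,\tmo$ contains exactly $t_m$ instances, which settles the case $r=0$.

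\textbf{The case $1\le r\le m$.} Let $P$ be $m\,\tmo$ together with the $r+1$ cells
$$(m+1-j,\,j)\quad\text{for } j=0,1,\dots,r,$$
which lie on the next antidiagonal $x+y=m+1$, starting at the right end of the bottom row and climbing upward. This is the construction pictured in \Cref{fig:stairclimb}.
\begin{itemize}
\item Each added cell shares an edge with a cell of $m\,\tmo$: the cell $(m+1-j,j)$ is adjacent to $(m-j,j)$. So $P$ is a polyomino.
\item For each $j=0,\dots,r-1$, the cells $(m-j,j)$, $(m+1-j,j)$, $(m-j,j+1)$ form an instance of $\tmo$ with corner $(m-j,j)$. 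The first cell lies in $m\,\tmo$, and the other two are the added cells with indices $j$ and $j+1$.
\item These instances have corner-sum $a+b=m$, so they are distinct from the $t_m$ old instances, whose corners satisfy $a+b\le m-1$.
\end{itemize}
Thus $P$ has at least $t_m+r$ instances and $t_{m+1}+r+1$ cells, as required.

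\textbf{Where care is needed.} The only real point is the bookkeeping that adding $r+1$ cells yields $r$ new instances; the extra cell is exactly the $\ceil{r/m}$ term. The constraint $r\le m$ keeps the added cells in the first quadrant, since $j\le r\le m$ gives $m+1-j\ge 1$. As a sanity check, at $r=m$ the construction is exactly $(m+1)\tmo$: it has $t_{m+2}$ cells and $t_{m+1}=t_m+m$ instances, consistent with the stated bound.
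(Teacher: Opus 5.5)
Your construction and counting match the paper's proof: for $r=0$ take $m\,\tmo$, and for $1\le r\le m$ append a cell to the right end of each of the bottom $r+1$ rows, which gains $r$ new instances for $r+1$ cells. Your coordinates make the proof rigorous, but the closing sanity check is wrong and should be fixed or dropped. At $r=m$ the shape is $(m+1)\,\tmo$ minus its top cell $(0,m+1)$, so it has $t_{m+2}-1=t_{m+1}+m+1$ cells and $t_m+m=t_{m+1}-1$ instances, because $t_{m+1}=t_m+m+1$; the main argument is unaffected.
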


\begin{proof}
    We will construct a polyomino $P$ with $t_m+r+\ceil{r/m}$ cells having at least $t_m+r$ instances of $\tmo$.

    If $r=0$, we simply let $P = m \,\tmo$, which has $t_{m+1}$ cells and at least $t_m$ instances of $\tmo$.

    If $r > 0$, we adjoin a cell to the right end of each of the bottom $r+1$ rows of $m\, \tmo$.
    This creates $r$ new instances of $\tmo$ while adding $r+1$ cells, for a total of at least $t_m+r$ instances and $t_{m+1} + r + 1$ cells. Since $1 \leq r \leq m$, we have $\ceil{r/m} = 1$ so the number of cells is $t_{m+1} + r+ \ceil{r/m}$.
\end{proof}

We will prove this construction is dense in \Cref{sec:mainresults}.

\subsection{A Construction for P polyominoes}

We define a \bfb{P $n$-omino}, for $n \geq 4$, to be a left-aligned polyomino with two rows that has $2$ cells in the top row and $n-2$ cells in the bottom row. This generalizes the naming convention of the P pentomino, as depicted in \Cref{fig: P s-ominos}.

\begin{figure}
    \centering
    \begin{subfigure}{0.2\textwidth}
        \centering
        \begin{ytableau}
            \ & \ \\
            \ \ &\ \\
        \end{ytableau} 
        \caption{P tetromino}
    \end{subfigure}
    \begin{subfigure}{0.22\textwidth}
        \centering
        \begin{ytableau}
            \ & \ \\
            \ \ &\ & \  \\
        \end{ytableau} 
        \caption{P pentomino}
    \end{subfigure}
    \begin{subfigure}{0.23\textwidth}
        \centering
        \begin{ytableau}
            \ & \ \\
            \ & \ & \  & \ \\
        \end{ytableau} 
        \caption{P 6-omino}
    \end{subfigure}
    \begin{subfigure}{0.3\textwidth}
        \centering
        \begin{ytableau}
            \ &\  \\
            \ & \ & \ & \ &\ \\
        \end{ytableau} 
        \caption{P 7-omino}
        \label{fig: P hexomino}
    \end{subfigure}
    \caption{The P $n$-ominoes for $n = 4$, 5, 6,  and 7.}
    \label{fig: P s-ominos}
\end{figure}

\begin{figure}
    \centering
    \begin{subfigure}{0.375\textwidth}
        \centering
        \input{Images/PpentominoMultiples}
        \caption*{$\tppmo$, \ $2 \, \tppmo$, \ $3 \, \tppmo$, and $4 \, \tppmo$ }
    \end{subfigure}
    \begin{subfigure}{0.542\textwidth}
        \centering
        \input{Images/PhexominoMultiples}
        \caption*{$\tphmo$, \ $2 \, \tphmo$, \ $3 \, \tphmo$, and $4 \, \tphmo$ }
    \end{subfigure}
    \caption{Multiples of P $n$-ominoes superimposed.} 
    \label{fig: Pnomino multiples}
\end{figure}

\Cref{fig: Pnomino multiples} shows some multiples of P $n$-ominoes. From these examples, we notice a consistent shape. We define an \bfb{$(a,b)$-staircase} to be a left-aligned polyomino having $a$ rows, with $a$ cells in the top row, with each other row having $b$ more cells than the row above it. 

We can express the number of cells in the $(a,b)$-staircase in terms of polygonal numbers, using \Cref{lem:Conway}.
We note that the $(a,0)$-staircase is just an $a \times a$ square, having $a^2 = \poly^4_a = a + 2t_{a-1}$ cells in total. The $(a,b)$-staircase has $b t_{a-1}$ cells to the right of its $a \times a$ square, for a total of $a + (b+2)t_{a-1}$ cells.  

\begin{lemma}\label{lem:staircase}
    For any P $n$-omino $p$ and positive integer $m$, the polyomino $m p$ is an $(m+1,n-4)$-staircase and has $\poly^n_{m+1}$ cells.
\end{lemma}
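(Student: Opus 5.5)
Since a P $n$-omino is determined up to translation, fix the instance $p = \{(0,0),(1,0),\dots,(n-3,0),(0,1),(1,1)\}$, where the bottom row has cells $(i,0)$ for $0 \le i \le n-3$ and the top row has cells $(0,1),(1,1)$. We view $mp$ as a sumset of coordinates and prove by induction on $m$ that
\[
mp = \{(x,y) : 0 \le y \le m,\ 0 \le x \le (m-y)(n-4) + m\}.
\]
In this set, row $y$ is convex and starts at $x=0$. The top row $y=m$ has $m+1$ cells. Going down one row adds $n-4$ cells. So this is exactly the left-aligned $(m+1,n-4)$-staircase.

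For the base case $m=1$, the formula gives row $0$ as $0 \le x \le n-3$ and row $1$ as $0 \le x \le 1$. This is $p$ itself.

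For the inductive step, write $(m+1)p = mp + p$. Translating $mp$ by each cell of $p$ and taking the union, row $y$ of the sum receives contributions from two sources:
\begin{itemize}
\item row $y$ of $mp$ shifted right by $0,1,\dots,n-3$, from the bottom row of $p$;
\item row $y-1$ of $mp$ shifted right by $0$ or $1$, from the top row of $p$.
\end{itemize}
Each source is a union of overlapping translates of an interval starting at $0$, so it is an interval starting at $0$.

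The first source has right endpoint $(m-y)(n-4)+m+(n-3) = (m+1-y)(n-4)+(m+1)$. The second has right endpoint $(m-y+1)(n-4)+m+1$. These agree, so row $y$ of $(m+1)p$ is $0 \le x \le (m+1-y)(n-4)+(m+1)$, as claimed.

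The boundary rows need separate checks:
\begin{itemize}
\item Row $0$ receives only the first source.
\item Row $m+1$ receives only the second source, giving $0 \le x \le m+1$, which matches the formula.
\item Row $m$ of $mp$ has right endpoint $m$. The first source then reaches $m+n-3$ and the second reaches $(n-4)+m+1$, which is the same value.
\end{itemize}
So the formula holds for all $0 \le y \le m+1$. I expect the only real care to be in this endpoint bookkeeping, that is, verifying that the two sources' endpoints coincide and that the top and bottom rows are handled correctly.

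For the cell count, the paper's computation shows the $(a,b)$-staircase has $a + (b+2)t_{a-1}$ cells. Substituting $a = m+1$ and $b = n-4$ gives $(m+1) + (n-2)t_m$. By \Cref{lem:Conway}, this equals $\poly^n_{m+1}$.
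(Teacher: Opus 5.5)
Your proof is correct and takes essentially the same approach as the paper. Both argue by induction on $m$ through $(m+1)p = mp + p$, track how each row lengthens and how a new top row appears, and then count cells with $a+(b+2)t_{a-1}$ at $a=m+1$, $b=n-4$ and \Cref{lem:Conway}; your explicit coordinate formula also checks that the two contributions to row $y$ share the right endpoint $(m+1-y)(n-4)+(m+1)$, a point the paper's argument leaves implicit.
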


\begin{proof}
    We proceed by induction on $m$. 
    For our base case, $m=1$, note that $p$ itself is a $(2,n-4)$-staircase. We hypothesize that for some integer $m>1$, $(m-1)p$ is an $(m,n-4)$ staircase.
    
    We compute $mp$ by taking the union of all instances of $p$ with a bottom left cell in some fixed instance of $(m-1)p$. This union includes $(m-1)p$, and has $(n-3)$ additional cells on the right of each row; these additional cells belong to the instance of $p$ with its bottom left cell at the end of that row of $(m-1)p$.

    The union also includes a new row above $(m-1)p$, with its left-most cell in the same column as the other rows, having one more cell than the top row of $(m-1)p$. The cells in the new row belong to instances of $p$ with their bottom left-most cell in the top row of $(m-1)p$.

    Therefore $mp$ is left-aligned, and has $m+1$ rows and $m+1$ cells in its top row. The row beneath the top row has $m+(n-3)$ cells, for a difference of $n-4$ cells. Each other row has $(n-4)$ fewer cells than the row above it, as this was the case for $(m-1)p$ and both rows increased in length by the same amount.

    Therefore $mp$ is a $(m+1,n-4)$-staircase. It follows immediately that the number of cells in $mp$ is
    $$m+1 + (n-2) t_m$$
    which is the $m+1$st $n$-gonal number $\poly^n_{m+1}$ by \Cref{lem:Conway}.
\end{proof}

For any P $n$-omino $p$, each cell in $m p$ corresponds to an instance of $p$ in $(m+1)p$. Then, by \Cref{lem:staircase}, $m p$ has at least $\poly^n_m$ instances of $p$ and has $\poly^n_{m+1}$ cells. This shows that
$$a_{p, \poly^n_m} \leq \poly^n_{m+1}.$$

\begin{figure}
    \centering
    \input{Images/StairClimbPentomino}
    \caption{From left to right, top to bottom, these polyominoes respectively contain $\poly^5_2 = 5$, 6, 7, 8, 9, 10, 11, and $\poly^5_3=12$ instances of ${\protect\ppmo}$.}
    \label{fig:StairClimbPentomino}
\end{figure}

We can generalize this upper bound by adding cells to the ends of the rows of $mp$ to create more instances of $p$, adding a single cell to the right end of each row, working from the bottom up. We repeat this process until reaching the next $n$-gonal number of cells, as in \Cref{fig:StairClimbPentomino}.
The general upper bound is described by the following lemma. 

\begin{lemma}\label{lem:Pnominoupperbound}
    For any positive integer $m$ and integer $r$ such that $0 \leq r \leq (n-2)m$,
    $$a_{p,\poly^n_{m} + r } \leq \poly^n_{m+1} + r + \ceil{\frac rm}$$
    where $p$ denotes the P $n$-omino.
\end{lemma}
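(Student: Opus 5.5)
The plan is a direct construction, parallel to \Cref{thm:tmo_upper_bound}. We exhibit a polyomino with $\poly^n_{m+1} + r + \ceil{r/m}$ cells containing at least $\poly^n_m + r$ instances of $p$. Since $a_{p,N}$ is the minimum size over all such polyominoes, this gives the bound.

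For $r=0$ we take $mp$ itself. By \Cref{lem:staircase} it is an $(m+1,n-4)$-staircase with $\poly^n_{m+1}$ cells. As noted after that lemma, each cell of $(m-1)p$ is the bottom-left cell of an instance of $p$ inside $mp$, so $mp$ contains at least $\poly^n_{m}$ instances.

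For $r>0$ we grow $mp$ in \emph{passes}. In one pass we add a single cell to the right end of each row, working from the bottom row upward. Index the rows $1,\dots,m+1$ from top to bottom, and let row $i$ currently have length $\ell_i$, so that $\ell_{i+1} = \ell_i + (n-4)$ in the staircase.

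The key local claim concerns a pass in which the rows below row $i$ have already been extended once more than row $i$. Appending a cell to row $i$, for $i \le m$, then creates exactly one new instance of $p$. Its top row is the two cells at columns $\ell_i, \ell_i+1$ of row $i$ after the addition, the second being the new cell. Its bottom row occupies columns $\ell_i$ through $\ell_i+n-3$ of row $i+1$. These fit because row $i+1$ has already been extended in this pass and so has length $\ell_i+(n-4)+1 = \ell_i + n-3$. The instance is new because it uses the new cell.

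Appending to the bottom row creates no instance, since nothing lies below it. So each pass costs one "wasted" cell followed by up to $m$ productive cells. After a full pass the shape is again a left-aligned shape whose consecutive rows differ by $n-4$, every row being one longer. Hence the same argument applies to the next pass, and for any number of passes.

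To reach $\poly^n_m + r$ instances, write $r = (k-1)m + s$ with $k = \ceil{r/m}$ and $1 \le s \le m$. We perform $k-1$ complete passes, then in the $k$th pass add the bottom cell and $s$ further cells. This adds $r$ instances using $r + k$ cells, giving $\poly^n_{m+1} + r + \ceil{r/m}$ cells in total. We need $k \le n-2$, which holds because $r \le (n-2)m$. In any case the argument does not depend on $k$ being small.

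The main obstacle is the bookkeeping in the local claim. We must verify that row $i+1$ is long enough exactly when row $i$ is extended, which is why the bottom-up order matters. We must also check that earlier instances are not destroyed, which is immediate since cells are only added. The resulting shape is connected because it remains left-aligned with nonempty rows, so it is a genuine polyomino.
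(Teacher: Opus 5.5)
Your proposal is correct and uses the same construction as the paper: starting from the staircase $mp$, you extend rows at their right ends bottom-up in full passes, each costing $m+1$ cells for $m$ new instances, and finish with a partial pass on the bottom $s+1$ rows. The differences are cosmetic: writing $r=(\ceil{r/m}-1)m+s$ with $1\le s\le m$ avoids the paper's split into the cases $s=0$ and $s>0$, and you explicitly check the length of row $i+1$, which the paper leaves implicit.
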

\begin{proof}
    We will construct a polyomino $Q$ with $\poly^n_m + r + \ceil{r/m}$ cells having at least $\poly^n_{m} + r$ instances of $p$.
    We note that $mp$ is an $(m+1,n-4)$-staircase, which has at least $\poly^n_m$ instances of $p$ and exactly $\poly^n_{m+1}$ cells.

    Applying the quotient-remainder theorem, let $t$ and $s$ be integers such that $r = mt+s$ and $0 \leq s < m$. Note that $t = \floor{r/m}$, and $t=r/m$ iff $s = 0$.
    
    We construct $Q$ from $mp$ in $t+1$ steps: first, we add one cell to the right end of each row of $mp$; we repeat this a total of $t$ times; lastly, if $s > 0$ we add one cell to the right end of the bottom $s+1$ rows of the polyomino.

    In each of the first $t$ steps we create $m$ new instances of $p$, while adding $m+1$ cells. In the final step we create $s$ new instances of $p$, while adding $s+1$ cells if $s > 0$.
    The number of instances of $p$ in $Q$ is therefore at least $\poly^n_m + mt + s = \poly^n_m + r$, as was to be shown.

    In the case that $s=0$, the number of cells added to the staircase is $$t(m+1) = tm+t = r + t.$$ In this case $t = r/m$. Because $t$ is an integer, $t = \ceil{r/m}$ and total number of cells in $Q$ is $\poly^n_{m+1} + r + \ceil{r/m}$
    as was to be shown.
    
    In the case that $s>0$, the number of cells added to the staircase is $$t(m+1) + s+1= mt+s + t +1 = r + \floor{\frac rm} + 1.$$
    Since $r/m$ is not an integer in this case, $\floor{r/m}+1 = \ceil{r/m}$. The total number of cells in $Q$ is therefore $\poly^n_{m+1} + r + \ceil{r/m}$
    as was to be shown.
\end{proof}
We will prove this construction is dense in \Cref{sec:mainresults}. We remark that this construction can be altered in the case $(n-3) m < r \leq (n-2)m$, so that the final cells added form a new top row of the shape rather than extending existing rows. 
This has the aesthetic benefit that the constructed shape contains $mp$, but fits within $(m+1)p$.

\subsection{A Lower Bound for L Polyominoes}\label{sec:lowerbounds} \label{sec:lowerboundforLominoes}

We define an \bfb{L $n$-omino}, for $n \geq 3$, to be a left-aligned polyomino with two rows that has 1 cell in the top row and $n-1$ cells in the bottom row. This generalizes the naming convention of the L tromino, L tetromino, and L pentomino, as depicted in \Cref{fig: L polyominoes}. We treat $n$ as fixed throughout the section.

\begin{figure}
    \centering
    \begin{subfigure}{0.2\textwidth}
        \centering
        \begin{ytableau}
            \ & \none  \\
            \ & \  \\
        \end{ytableau} 
        \caption*{L tromino}
        \label{fig: L tromino}
    \end{subfigure}
    \begin{subfigure}{0.25\textwidth}
        \centering
        \begin{ytableau}
            \ & \none & \none \\
            \ & \ & \ \\
        \end{ytableau} 
        \caption*{L tetromino}
        \label{fig: L tetromino}
    \end{subfigure}
    \begin{subfigure}{0.25\textwidth}
        \centering
        \begin{ytableau}
            \ & \none & \none & \none \\
            \ & \ & \  & \ \\
        \end{ytableau} 
        \caption*{L pentomino}
        \label{fig: L pentomino}
    \end{subfigure}
    \begin{subfigure}{0.25\textwidth}
        \centering
        \begin{ytableau}
            \ & \none & \none & \none \\
            \ & \ & \ & \  & \ \\
        \end{ytableau} 
        \caption*{L 6-omino}
        \label{fig: L hexomino}
    \end{subfigure}
    \caption{The L $n$-ominoes for $n=3$, 4, 5, and 6.}
    \label{fig: L polyominoes}
\end{figure}

The goal of this section is to give a lower bound on $a_{p,N}$ when $p$ is an L $n$-omino. 
Within this section, we assume the coordinates of all cells are nonnegative, so that all cells rest in the first quadrant of the Cartesian plane.

Within this section, we define the \bfb{diagonal} $D_i$ to be the set of cells in the first quadrant with coordinates along the line $y = - \frac{1}{n-2}(x-i)$.  It is straightforward to check that the cell with coordinates $(a,b)$ belongs to the diagonal $D_{(n-2)b+a}$, so that $D_0, D_1, D_2, ...$ partition the cells in the first quadrant.
We call the intersection of a diagonal with a polyomino a diagonal of that polyomino. We start with some technical lemmas about diagonals.

\begin{lemma}\label{lem:diagonalbonus}
    For any L $n$-omino $p$ and positive integer $N$, if $P$ is a $(p,N)$-dense polyomino with $k$ diagonals, then $|P| \geq N+k$.
\end{lemma}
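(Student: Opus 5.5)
The plan is to build an injection from the instances of $p$ in $P$ into the cells of $P$, and then show that on every diagonal of $P$ at least one cell lies outside its image. That gives $|P| \geq (\text{number of instances}) + k \geq N + k$. Density of $P$ is used only to guarantee that $P$ contains at least $N$ instances of $p$. The argument in fact works for any set of cells in the first quadrant.

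Fix an instance of $p$ in $P$ whose bottom-left cell has coordinates $(a,b)$. Its cells are
$$(a,b),\ (a+1,b),\ \dots,\ (a+n-2,b),\ (a,b+1).$$
I will send this instance to its top cell $(a,b+1)$. The map is injective, because the top cell determines the translation and hence the instance.

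The key observation is that the diagonals were chosen so that the top cell and the right-most bottom cell of an instance lie on the same diagonal. Using the rule that $(x,y)$ lies in $D_{(n-2)y+x}$:
\begin{itemize}
\item the top cell $(a,b+1)$ lies in $D_{(n-2)(b+1)+a}$;
\item the right-most bottom cell $(a+n-2,b)$ lies in $D_{(n-2)b+a+n-2}$, which is the same diagonal.
\end{itemize}
So whenever a cell $c=(x,y)$ of $P$ is the image of some instance, the cell $(x+n-2,\,y-1)$ also belongs to $P$. That cell lies on the same diagonal as $c$ and in a strictly lower row.

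Now take any nonempty diagonal of $P$ and let $c$ be its cell with the smallest $y$-coordinate. This is well defined because distinct cells of a diagonal lie in distinct rows. If $c$ were the image of an instance, the observation above would produce a cell of $P$ on the same diagonal in a lower row, contradicting the choice of $c$. Hence each of the $k$ diagonals of $P$ contains a cell outside the image of the injection. Counting gives
$$|P| \;\geq\; (\text{number of instances of } p \text{ in } P) + k \;\geq\; N + k.$$

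I do not expect a serious obstacle. The only step needing care is the coordinate check that the top cell and the right-most bottom cell share a diagonal, which the computation above settles.
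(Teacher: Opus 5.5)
Your proof is correct and is essentially the paper's argument. In both, the top cells of instances give at least $N$ distinct cells, and on each diagonal of $P$ the extreme cell cannot be a top cell, because the instance's right-most cell lies on the same diagonal beyond it; your lowest cell is exactly the paper's right-most cell, since $x = i-(n-2)y$ decreases as $y$ increases along $D_i$.
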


\begin{proof}
    Notice that $P$ has $N$ cells which belong to the top row of an instance of $p$. None of these can be the right-most cell in any diagonal of $P$, as the top cell and right-most cell of any instance of $p$ belong to the same diagonal.
    Therefore the top cells of instances of $p$, together with the right-most cells of each diagonal of $P$, form $N+k$ distinct cells in $P$.
\end{proof}

For $p$ an L $n$-omino, \Cref{thm:corner-aligned} implies there exist $(p, N)$-dense polyominoes which are corner-aligned, and we will consider instances of these which contain the cell with coordinates $(0,0)$. As shorthand, corner-aligned polyominoes in the first quadrant that include the cell with coordinates $(0,0)$ are called \bfb{quadrant-aligned}.

\begin{lemma}\label{lem:diagorder_first}
    For any positive integer $N$, let $P$ be a quadrant-aligned $(p,N)$-dense polyomino where $p$ is an L $n$-omino. If $P$ intersects a diagonal, then $P$ intersects every diagonal of lower index.
\end{lemma}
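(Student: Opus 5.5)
The plan is to use only the fact that $P$ is quadrant-aligned, i.e.\ corner-aligned and containing $(0,0)$; density itself will not be needed. Corner-aligned means $P$ is a down-set in the first quadrant. If a cell $(a,b)$ lies in $P$, then every cell $(a',b')$ with $0 \le a' \le a$ and $0 \le b' \le b$ also lies in $P$.

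To see this, recall that $P$ is bottom-aligned with convex columns, and its bottom row is row $0$ because $(0,0) \in P$ and every column's bottom cell is in the same row. So if $(a,b) \in P$, then $(a,b') \in P$ for all $0 \le b' \le b$. Similarly, $P$ is left-aligned with convex rows, and its left column is column $0$. So if $(a,b') \in P$, then $(a',b') \in P$ for all $0 \le a' \le a$.

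Now suppose $P$ meets the diagonal $D_i$, say at a cell $(a,b)$ with $(n-2)b + a = i$. We need a cell of $P$ on $D_j$ for each $0 \le j < i$. It suffices to show that $P$ meets $D_{i-1}$ whenever $i \ge 1$; downward induction on the index then finishes the proof.

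There are two cases.
\begin{itemize}
\item If $a \ge 1$, then $(a-1,b) \in P$ by the down-set property, and this cell lies in $D_{(n-2)b + a - 1} = D_{i-1}$.
\item If $a = 0$, then $i = (n-2)b \ge 1$, so $b \ge 1$ and $n \ge 3$. The cell $(0,b)$ is in $P$. By bottom-alignment and column convexity, $(0,b-1) \in P$, which lies on $D_{(n-2)(b-1)} = D_{i-(n-2)}$. This overshoots when $n > 3$, so it does not directly give $D_{i-1}$.
\end{itemize}

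The case $a = 0$ with $n \ge 4$ is the main obstacle. Here the diagonals $D_{i-1}, \dots, D_{i-n+3}$ need cells of the form $(x, b-1)$ with $1 \le x \le n-3$. So we must show that row $b-1$ of $P$ has length at least $n-2$. Being a down-set alone does not force this: a single column is a quadrant-aligned set that skips diagonals. This is where density has to enter.

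My approach is to show that a dense corner-aligned $P$ has every nonempty row except the top one of length at least $n-1$. Suppose some row $y$ below the top row has fewer than $n-1$ cells. Then that row is the bottom row of no instance of $p$, and since rows are nested and weakly decrease going up, no instance of $p$ has its bottom row at row $y$ or above. The only role of row $y+1$ and the rows above it would be to serve as the top cells of instances whose bottom row is row $y$. Since there are no such instances, all rows strictly above row $y$ could be deleted, giving a smaller set that still contains $N$ instances. This contradicts minimality, since $P$ is dense.

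With that fact in hand, take $a=0$, $b\ge1$: row $b-1$ is not the top row, so it has at least $n-1 \ge n-2$ cells. Thus $(x,b-1) \in P$ for all $0 \le x \le n-2$. These cells cover the diagonals $D_{i-(n-2)}$ through $D_i$, and in particular $D_{i-1}$. This completes the inductive step.
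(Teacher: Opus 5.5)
Your proof is correct. It shares the paper's skeleton (induction on the diagonal index, with the case $x>0$ handled identically by stepping left to $(x-1,y)$), but it brings in density differently in the case $x=0$, $y>0$.

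The paper argues locally. Since $P$ is dense, the cell $(0,y)$ lies in some instance of $p$, necessarily as one of its two left cells. Either way $(n-2,y-1)\in P$, directly or via corner-alignment from $(n-2,y)$. That cell lies on the same diagonal $D_{(n-2)y}$ with positive $x$-coordinate, so the first case applies.

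You instead prove a global structural fact: in a dense quadrant-aligned $P$, every row below the top row has at least $n-1$ cells, because otherwise the rows above it meet no instance and can be deleted. This puts $(0,b-1),\dots,(n-2,b-1)$ in $P$ and hits $D_{i-1}$ directly (in fact all of $D_{i-(n-2)},\dots,D_i$).

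Your route has a small technical advantage. The truncated set is again a down-set containing $(0,0)$, hence a polyomino, so the contradiction with density is immediate. The paper's step ``the cell $(0,y)$ belongs to some instance of $p$'' tacitly relies on the connectivity lemma, since deleting an unused cell could a priori disconnect $P$. Say explicitly in your write-up that the truncation stays connected. The paper's route is shorter and needs no auxiliary claim.

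Finally, your opening sentence says density will not be needed, which you later correctly retract with the single-column example; delete it.
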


\begin{proof}
    We proceed by induction on the index $k$ of the diagonals that intersect $P$. The base case $k=0$ holds vacuously.

    For our inductive hypothesis, suppose for some positive integer $k$ that if $P$ intersects a diagonal of index $k$ then it intersects every diagonal of lower index. For our inductive step, suppose $P$ intersects the diagonal of index $k+1$ at a cell with coordinates $(x,y)$. Since $k$ is positive, $x > 0$ or $y>0$.

    Suppose $x > 0$. Then $P$ also contains the cell $(x-1,y)$ since $P$ is corner-aligned. This cell belongs to the diagonal of index $k$, so the statement follows by induction.

    Now suppose $x=0$ and $y > 0$. Since $P$ is $(p,N)$-dense, the cell $(0,y)$ belongs to some instance of $p$ in $P$, and must be one of the two left-most cells of that instance. We will show that in either case $P$ also contains the cell $(n-2,y-1)$; this cell belongs to the diagonal of index $k+1$ so that the statement follows from the previous argument with $x>0$.
 
    If $(0,y)$ is the top left cell of an instance of $p$, then $(n-2,y-1)$ is the right-most cell of that instance of $p$ and belongs to $P$.
    If $(0,y)$ is the bottom left cell of an instance of $p$, then $(n-2,y)$ is the right-most cell of that instance of $p$ and belongs to $P$; then $P$ also contains $(n-2,y-1)$ since $P$ is corner-aligned. So $P$ contains the cell $(n-2,y-1)$ in either case, as was to be shown.
\end{proof}

\begin{lemma}\label{lem:diagonalscontain}
    For any positive integer $M$, there exist a unique integer $m$ and a unique integer $u$ such that $M = (n-2) m + u$ and $0 \leq u < n-2$. Furthermore, the union of the first $M+1$ diagonals $\bigcup_{i=0}^M D_i$ forms a polyomino having at most $\poly^n_m+mu$ instances of the L $n$-omino $p$.
\end{lemma}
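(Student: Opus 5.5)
The plan is to count the instances directly by locating each instance of $p$ through its bottom-left cell; this gives an exact count, not just an upper bound. Existence and uniqueness of $m$ and $u$ is the division algorithm applied to $M$ and $n-2 \geq 1$. The union $\bigcup_{i=0}^M D_i$ is the set of first-quadrant cells $(a,b)$ with $(n-2)b + a \leq M$. Each nonempty row of it is an interval starting at $x=0$, and the rows occupy consecutive $y$-values starting at $0$. So the union is a left-aligned (indeed corner-aligned) set of cells containing $(0,0)$, and hence a polyomino.

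Next, I will characterize which translates of $p$ lie in the union. An instance of $p$ with bottom-left cell $(a,b)$ consists of the cells $(a,b), (a+1,b), \dots, (a+n-2,b)$ and $(a,b+1)$. Both the top cell $(a,b+1)$ and the right-most cell $(a+n-2,b)$ lie on the diagonal $D_{(n-2)(b+1)+a}$, which is the largest diagonal index among the cells of the instance. This is the same observation used in \Cref{lem:diagonalbonus}. The union is a down-set in the diagonal index, so the instance lies in it if and only if $(n-2)(b+1) + a \leq M$. Instances are therefore in bijection with the first-quadrant cells $(a,b)$ satisfying this inequality.

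Finally, I will count these cells row by row. Put $j = b+1$. The condition is satisfiable exactly when $(n-2)j \leq M$, that is, for $j = 1, \dots, m$, using $0 \leq u < n-2$. Row $j$ then contributes $M - (n-2)j + 1 = (n-2)(m-j) + u + 1$ admissible values of $a$. Summing over $j$ gives
$$\sum_{j=1}^{m}\bigl((n-2)(m-j) + u + 1\bigr) = (n-2)t_{m-1} + m + mu,$$
and by \Cref{lem:Conway} this equals $\poly^n_m + mu$. That is the claimed bound, with equality.

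The only delicate point is the edge case $m = 0$, that is, $M < n-2$. Here no instance fits, and $\poly^n_0$ is not defined in the paper. I would handle it by noting that the count is $0$ and adopting the convention $\poly^n_0 = 0$, which is consistent with \Cref{lem:Conway}. Otherwise I expect the argument to be routine. The main step to verify carefully is that the maximal diagonal index of an instance is attained exactly at its top and right-most cells, since this is what makes the fitting condition an inequality on the bottom-left cell alone.
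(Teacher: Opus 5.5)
Your proof is correct. It rests on the same key observation as the paper's: the top and right-most cells of an instance of $p$ lie on the largest-index diagonal that the instance meets. What differs is how you organize the count.

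The paper builds the union incrementally. It bounds the number of new instances contributed by each diagonal $D_k$ by $\lfloor k/(n-2)\rfloor$, which is one less than $|D_k|$, since a new instance meets $D_k$ in a pair of consecutive cells. It then sums these increments in blocks of $n-2$ diagonals. In effect, it groups instances by the index of their maximal diagonal.

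You instead parametrize instances by their bottom-left cell. This turns containment into the single inequality $(n-2)(b+1)+a \leq M$, and you then sum over rows. This is the same set of instances, summed in a different order. Your version gives three things the paper's does not:
\begin{itemize}
\item an exact count, showing the bound $\poly^n_m+mu$ is attained;
\item an explicit check that the union really is a polyomino;
\item a treatment of the case $m=0$ (where $M<n-2$ and $\poly^n_0$ is undefined), which the paper passes over.
\end{itemize}
The paper's diagonal-by-diagonal increments make visible that each diagonal beyond $D_{(n-2)m}$ adds at most $m$ instances. That is exactly the form in which the bound is used in \Cref{lem:Lnominolowerbound} to conclude that at least $\lceil r/m\rceil$ further diagonals are needed. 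Your closed form yields the same conclusion just as easily.
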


\begin{proof} 
    The existence and uniqueness of $m$ and $u$ follow from the quotient-remainder theorem. We will count how many instances of $p$ are created as we add each diagonal to the union. It may be helpful to refer to \Cref{fig:diagonal}.

    For some positive integer $k$, let $Q$ be the union $\bigcup_{i=0}^{k-1} D_i$.
    Note that the diagonal $D_k$ has $\floor{k/(n-2)}+1$ cells. 
    When we add $D_k$ to the union, any new instance of $p$ can only intersect $D_k$ at its top- or right-most cell, as no cell in $D_k$ is immediately left from a cell in $Q \cup D_k$. 
    The total number of instances of $p$ in the first quadrant which have a top cell or right-most cell in $D_k$ is
    one less than the number of cells in $D_k$, or $\floor{k/(n-2)}$. Therefore $D_k$ adds at most $\floor{k/(n-2)}$ instances of $p$ to the shape. 

        \begin{figure}
        \centering

        \begin{tikzpicture}[scale=.5]
            \draw[thick, <->] (9.25,0) -- (0,0) -- (0,6.25);
        
            \begin{scope}
            \clip (0,0) rectangle ++ (9,6);
            \foreach \x in {0,...,18}{
            \foreach \y in {0,...,\x}{
                \numcell{\x-2*\y}{\y}{\x}
            }
            }
                
            \end{scope}

            \foreach \z in {0,1,2,3}{
                \draw[ultra thick] (6-2*\z,\z) --++ (3,0)--++(0,1)--++(-2,0)--++(0,1)--++(-1,0)--cycle;
            }
            
        \end{tikzpicture}
        
        \caption{
        Each cell in the first quadrant is labeled with the index of its diagonal, where $n=4$. 
        The $i$th diagonal $D_i$ contains $\floor{i/(n-2)}+1$ cells. Note that if $p$ is an L $n$-omino, then the top- and right-most cells of any instance of $p$ in the first quadrant are consecutive cells of some diagonal. The number of instances of $p$ that have a top- or right-most cell in a diagonal $D_i$ is then one less than the number of cells in $D_i$, as demonstrated here for $D_8$ which has five cells and intersects four instances of ${\protect \lmo}$.} 
        \label{fig:diagonal}
    \end{figure}

    Therefore adding diagonals $D_{m(n-2)}$ through $D_{(m+1)(n-2)-1}$ to the union creates at most $m$ instances of $p$ per diagonal; so adding the first $n-2$ diagonals creates no instances of $p$, the next $n-2$ diagonals each create up to 1 instance of $p$, the next $n-2$ diagonals each create up to 2 instances of $p$, and so forth.
    Then the number of instances of $p$ in $\bigcup_{i=0}^{m(n-2)} D_i$ is at most $$(n-2)t_{m-1} + m$$ which is $\gamma_n^m$ by \Cref{lem:Conway}. The number of instances of $p$ in $\bigcup_{i=0}^{m(n-2)+u} D_i$ is then at most $\poly^m_n + mu$ as was to be shown.
\end{proof}

We are now ready to give a lower bound on the instance sequence for L $n$-ominoes. This will also serve as a lower bound for many other polyominoes.

\begin{lemma}\label{lem:Lnominolowerbound}
For any positive integer $m$ and integer $r$ such that $0 \leq r \leq (n-2)m$,
    $$a_{p,\poly^n_{m} + r} \geq \poly^n_{m+1} + r + \ceil{\frac rm}$$
    where $p$ denotes the L $n$-omino.
\end{lemma}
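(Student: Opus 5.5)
Since the L $n$-omino $p$ is corner-aligned, \Cref{thm:corner-aligned} gives a corner-aligned $(p,N)$-dense polyomino for $N=\poly^n_m+r$. After translating it, we may take it to be quadrant-aligned; call it $P$. Let $k$ be the number of diagonals of $P$. By \Cref{lem:diagorder_first}, the diagonals meeting $P$ are exactly $D_0,\dots,D_{k-1}$, so $P\subseteq\bigcup_{i=0}^{k-1}D_i$. Every instance of $p$ in $P$ is then an instance in this union. By \Cref{lem:diagonalbonus}, $|P|\ge N+k$. The whole proof therefore reduces to a lower bound on $k$. Writing $c=\ceil{r/m}$ and using \Cref{lem:recursion}, $\poly^n_{m+1}=\poly^n_m+(n-2)m+1$, so the target inequality $|P|\ge \poly^n_{m+1}+r+c$ is exactly $N+k\ge N+(n-2)m+1+c$. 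So I aim to prove
$$k-1\;\ge\;(n-2)m+c .$$

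For this I use \Cref{lem:diagonalscontain}. For $M\ge 0$, write $M=(n-2)m'+u$ with $0\le u<n-2$ and set $f(M)=\poly^n_{m'}+m'u$; this bounds the number of instances of $p$ in $\bigcup_{i=0}^{M}D_i$. The proof of that lemma shows the bound is cumulative: it is built by adding at most $\floor{i/(n-2)}\ge 0$ instances per diagonal $D_i$. Hence $f$ is nondecreasing in $M$. (For $M=0$ one can check directly that there are no instances.) Since $P$ contains at least $N$ instances, $f(k-1)\ge N$. It therefore suffices to show $f(M_0)<N$ for $M_0=(n-2)m+c-1$; monotonicity then forces $k-1>M_0$.

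I split into cases on $c$.
\begin{itemize}
\item If $r=0$, then $c=0$ and $M_0=(n-2)(m-1)+(n-3)$, so $m'=m-1$ and $u=n-3$. Using \Cref{lem:recursion},
$$f(M_0)=\poly^n_{m-1}+(m-1)(n-3)=\poly^n_m-(n-2)(m-1)-1+(m-1)(n-3)=\poly^n_m-m<N.$$
The case $m=1$ is trivial, since then $M_0=-1$ and there is nothing to check.
\item If $1\le c\le n-2$, then $0\le c-1<n-2$, so $m'=m$ and $u=c-1$. This gives $f(M_0)=\poly^n_m+m(c-1)$. Since $c-1<r/m$, we get $m(c-1)<r$, hence $f(M_0)<N$.
\end{itemize}
The constraint $r\le (n-2)m$ guarantees $c\le n-2$, so these two cases cover everything.

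The step I expect to need the most care is the monotonicity of $f$ and its correct evaluation across the boundary where $u$ wraps around to $0$ and $m'$ increments. The statement of \Cref{lem:diagonalscontain} only gives the bound pointwise, so I would re-derive monotonicity from its proof: each added diagonal contributes a nonnegative number of new instances. I would also double-check that $f$ agrees at the wrap point, namely $f((n-2)(m'+1))=\poly^n_{m'+1}$, which again follows from \Cref{lem:recursion}. The case $n=3$, where $u$ is always $0$, also needs checking, but it goes through the same way. Combining the cases gives $k\ge (n-2)m+1+c$, and hence $|P|\ge \poly^n_{m+1}+r+\ceil{r/m}$.
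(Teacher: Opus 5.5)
Your proposal is correct and is essentially the paper's proof. You quadrant-align via \Cref{thm:corner-aligned}, use \Cref{lem:diagorder_first} and \Cref{lem:diagonalscontain} to force at least $(n-2)m+\lceil r/m\rceil+1$ diagonals, and finish with \Cref{lem:diagonalbonus} and \Cref{lem:recursion}; your monotonicity argument and case split (notably $f(M_0)=\poly^n_m-m$ when $r=0$) spell out a step the paper asserts in one line.

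The only slip is the subcase $m=1$, $r=0$: there $M_0=n-3$, not $-1$. The check still holds, because $D_0\cup\dots\cup D_{n-3}$ is a single row of $n-2$ cells and contains no instance of $p$, so $f(n-3)=0<1=N$.
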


\begin{proof}
    Consider $P$, a $(p,\poly^n_{m} + r)$-dense polyomino. We will show $|P| \geq \poly_{m+1}^n + r + \ceil{r/m}.$ 
    By \Cref{thm:corner-aligned}, we may assume without a loss of generality that $P$ is quadrant-aligned. Let $k$ be the highest index of any diagonal that $P$ intersects; by \Cref{lem:diagorder_first}, $P$ also intersects every diagonal of lower index.

    The number of instances of $p$ in $P$ is $\poly^n_m + m \cdot \frac{r}{m}$ , so \Cref{lem:diagonalscontain} implies $k \geq (n-2)m + \ceil{r/m}$. Then $P$ has at least $1+(n-2)m + \ceil{r/m}$ diagonals. By \Cref{lem:diagonalbonus} and \Cref{lem:recursion},
    \begin{align*}
        |P|  &\geq \poly^n_{m} + r + 1+(n-2)m +\ceil{\frac rm}\\
        &= \poly_{m+1}^n + r + \ceil{\frac rm}.
    \end{align*}
\end{proof}

\subsection{Combining Upper and Lower Bounds}\label{sec:mainresults}
The lower bound on instance sequences given by \Cref{lem:Lnominolowerbound} is also a lower bound for L trominoes and P polyominoes. Furthermore, it is equal to the upper bounds we have given for those instance sequences.

\begin{theorem}\label{thm:trominomain}
    For any positive integer $N$, there exist a unique positive integer $m$ and unique integer $r$ such that $0 \leq r \leq m$ and $N = t_m + r$. Furthermore, 
    $$a_{\,\tmot, N} = t_{m+1}  + r+ \ceil{\frac rm}.$$
\end{theorem}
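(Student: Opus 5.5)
The plan is to squeeze $a_{\,\tmot,N}$ between the upper bound of \Cref{thm:tmo_upper_bound} and a lower bound obtained from \Cref{lem:Lnominolowerbound} with $n=3$. The key observation is that the L $3$-omino is exactly the L tromino, so the lower bound applies with no change of polyomino.

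First I will settle existence and uniqueness of $m$ and $r$. Taking $n=3$ in \Cref{lem:polydecompose} gives unique $m\ge 1$ and $r$ with $0\le r\le (n-2)m=m$ and $N=\poly^3_m+r$. \Cref{lem:Conway} gives $\poly^3_m=m+t_{m-1}=t_m$, so $N=t_m+r$ as required. Alternatively, I can argue this directly: the intervals $[t_m,\,t_m+m]=[t_m,\,t_{m+1}-1]$ partition the positive integers.

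Next I will match the two bounds. By \Cref{lem:Lnominolowerbound} with $n=3$,
$$a_{\,\tmot,t_m+r}\ge \poly^3_{m+1}+r+\ceil{r/m}=t_{m+1}+r+\ceil{r/m}.$$
\Cref{thm:tmo_upper_bound} gives the reverse inequality with the same expression, so equality follows.

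The only point needing care is that the L $3$-omino as defined in \Cref{sec:lowerboundforLominoes} coincides with $\tmo$: it is left-aligned, with one cell in the top row and two in the bottom row. I also want the proof of \Cref{lem:Lnominolowerbound} to be valid when $n-2=1$. In that case each diagonal is $D_i=\{(a,b):a+b=i\}$, and \Cref{lem:diagonalscontain} has $u=0$ always, so nothing degenerates. I expect no real obstacle beyond confirming this identification of the shape.
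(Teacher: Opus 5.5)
Your proposal is correct and follows the paper's proof. You get existence and uniqueness of $m,r$ from \Cref{lem:polydecompose} with $n=3$ (using $\poly^3_m=t_m$), and then match the lower bound of \Cref{lem:Lnominolowerbound}, applied with $n=3$ since the L $3$-omino is $\tmo$, against the construction of \Cref{thm:tmo_upper_bound}. Your extra check that the diagonal argument still works when $n-2=1$ is a reasonable sanity step that the paper leaves implicit.
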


\begin{proof}
    The existence and uniqueness of $m$ and $r$ follow from \Cref{lem:polydecompose}.
    By \Cref{lem:Lnominolowerbound} and \Cref{thm:tmo_upper_bound}, 
    $$t_{m+1} + r+ \ceil{\frac rm} \leq a_{\,\tmot, t_m+r} \leq t_{m+1}  + r+ \ceil{\frac rm},$$
    and therefore $a_{\,\tmot, N} = t_{m+1}  + r+ \ceil{r/m}.$
\end{proof}

\begin{theorem}\label{thm:main}
    For any positive integer $N$ and positive integer $n \geq 4$, there exist a unique positive integer $m$ and unique integer $r$ such that $0 \leq r \leq (n-2)m$ and $N = \poly^n_m + r$. Furthermore, if $p$ is either a P $n$-omino or L $n$-omino, then 
    $$a_{p,N} = \poly_{m+1}^n + r + \ceil{\frac rm}.$$
\end{theorem}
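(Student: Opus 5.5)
The plan is to sandwich $a_{p,N}$ between the upper bound for P $n$-ominoes and the lower bound for L $n$-ominoes, and to connect the two shapes with a column slide.

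First, existence and uniqueness of $m$ and $r$ with $0 \le r \le (n-2)m$ and $N = \poly^n_m + r$ come directly from \Cref{lem:polydecompose}, since $n \geq 4 \geq 3$.

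Next, relate the two polyominoes. Write $q$ for the P $n$-omino and $\ell$ for the L $n$-omino. Apply the column slide $\sigma$ to $q$ rotated by $90^\circ$ (equivalently, the row slide $\sigma'$ to a suitable reflection), in the same spirit as the arrows of \Cref{fig:posets}.

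More concretely, I would find a row shift or column slide taking $q$ onto $\ell$, up to a symmetry of the plane that preserves instance sequences. Such a symmetry is a rotation or reflection applied to both the shape and the dense polyomino, so it preserves $a$.

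The candidate I would check first is shifting the top row of $q$. The P shape has top row of length $2$ and bottom row of length $n-2$, both left-aligned. Applying $\rho^{-1}$ moves the top row one cell right relative to the bottom. Then applying $\sigma'$ (the row-slide analogue of \Cref{thm:slideorder}) to the reflected picture left-aligns the rows without changing their lengths, so it does not directly turn P into L.

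Instead, I would rotate $q$ so that its rows become columns. The rotated $q$ has two columns of heights $2$ and $n-2$ next to each other. Its column slide is the bottom-aligned shape with those column heights, and it contains the original configuration's instances. This gives $a_{q,N} \ge a_{\sigma(\text{rot } q),N}$, but that target is not the L $n$-omino either.

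The genuinely needed inequality is $a_{q,N} \ge a_{\ell,N}$. So the step I expect to be the main obstacle is choosing the right chain of row shifts and slides realizing it; the arrows in \Cref{fig:posets} for $n=5$ suggest such a chain exists.

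If no clean transformation chain works for general $n$, the fallback is to verify directly that the lower-bound proof of \Cref{lem:Lnominolowerbound} goes through verbatim for $q$. The P $n$-omino is corner-aligned, so \Cref{thm:corner-aligned} supplies a quadrant-aligned dense polyomino. Using the same diagonals $D_i$ (slope $-1/(n-2)$), every instance of $q$ has its top-right cell and its bottom-right cell on a common diagonal, since $(1,1)$ and $(n-3,0)$ satisfy $(n-2)\cdot 1 + 1 = n-1 \ne n-3$. Because that pair is not on one diagonal, I would instead pair the top-left cell $(0,1)$ with the rightmost cell $(n-3,0)$, giving indices $n-2$ and $n-3$, which are also not equal.

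So the honest primary route is the transformation inequality $a_{q,N}\ge a_{\ell,N}$ via \Cref{lem:shiftorder} and \Cref{thm:slideorder}. The check I would do is that $\rho$ applied to a suitable reflection of $q$ yields a shape whose slide is $\ell$ rotated. For instance, reflecting $q$ vertically and then row-shifting staggers its two rows. Column-sliding that result gives column heights $1,\dots$ that rotate to $\ell$ once the top two cells sit in one column.

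Granting this inequality, the finish is straightforward. \Cref{lem:Lnominolowerbound} gives
$$\poly^n_{m+1}+r+\ceil{r/m}\le a_{\ell,N}\le a_{q,N},$$
and \Cref{lem:Pnominoupperbound} gives
$$a_{q,N}\le \poly^n_{m+1}+r+\ceil{r/m}.$$
Hence all three quantities are equal, and the formula holds for both $p=q$ and $p=\ell$.
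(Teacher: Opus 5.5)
\paragraph{Overall.} Your skeleton matches the paper's: \Cref{lem:polydecompose} gives $m$ and $r$, and then \Cref{lem:Lnominolowerbound} and \Cref{lem:Pnominoupperbound} give the sandwich $\poly^n_{m+1}+r+\ceil{r/m}\le a_{\ell,N}\le a_{q,N}\le \poly^n_{m+1}+r+\ceil{r/m}$. The gap is the one new ingredient, $a_{\ell,N}\le a_{q,N}$. You never establish it; you end by ``granting'' it, and every concrete route you try fails.

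\paragraph{Why your attempts fail.} A column slide preserves column heights. One row shift of $q$, or of any of its reflections, only produces height profiles such as $1,2,1,\dots,1$ or $1,2,2,1,\dots,1$. It never produces the profile $2,1,\dots,1$ of $\ell$ or its reverse. For instance, $\rho(q)$ has profile $1,2,1,\dots,1$, so $\sigma(\rho(q))$ is a row of $n-1$ cells with one cell above the \emph{second} one. That is the T tetromino for $n=4$ and the Y pentomino for $n=5$, and it is not congruent to $\ell$. Your final ``reflect, shift, slide, rotate'' check lands on this shape or on one with profile $1,2,2,1,\dots,1$, and no rotation fixes that. The diagonal fallback also fails. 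The cells of $q$ have diagonal indices $0,\dots,n-3$ and $n-2,n-1$, all distinct, so no pair of cells can play the role of the top and right-most cells in \Cref{lem:diagonalbonus}. (Your sentence asserting that the top-right and bottom-right cells share a diagonal contradicts the arithmetic right after it.)

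\paragraph{The missing step.} You need a second row shift after the slide. Use the paper's convention $\rho(x,y)=(x-y,y)$, under which $\rho$ sends the square tetromino to the Z tetromino. Take $q=\{(0,0),\dots,(n-3,0),(0,1),(1,1)\}$. Then:
\begin{itemize}
\item $\rho(q)=\{(0,0),\dots,(n-3,0),(-1,1),(0,1)\}$, which is a polyomino;
\item $\sigma(\rho(q))=\{(-1,0),\dots,(n-3,0),(0,1)\}$;
\item $\rho(\sigma(\rho(q)))=\{(-1,0),\dots,(n-3,0),(-1,1)\}$, which is exactly $\ell$.
\end{itemize}
Apply \Cref{lem:shiftorder} twice (both images are polyominoes) together with \Cref{thm:slideorder}. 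This gives $a_{q,N}=a_{\rho(q),N}\ge a_{\sigma(\rho(q)),N}=a_{\ell,N}$. This is the square $\to$ Z $\to$ T $\to$ L chain of \Cref{sec:relatinginstancesequences}, and the P $\to$ N $\to$ Y $\to$ L chain of \Cref{fig:posets}, carried out for general $n$. The paper's proof writes this step as $\sigma(\rho(p))=q$, but with the definitions as stated the final row shift is genuinely needed. Once this inequality is supplied, your concluding sandwich is exactly the paper's argument.
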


\begin{proof}
    The existence and uniqueness of $m$ and $r$ follow from \Cref{lem:polydecompose}.
    
    Let $p$ be a P $n$-omino, and let $q$ be an L $n$-omino. Then applying a row shift to $p$ followed by a column slide, $\sigma(\rho(p)) = q$. It follows from \Cref{lem:shiftorder} and \Cref{thm:slideorder} that $a_{q,N} \leq a_{p,N}$ for every positive integer $N$. By \Cref{lem:Lnominolowerbound} and \Cref{lem:Pnominoupperbound} ,
    $$\poly^n_{m+1} + r + \ceil{\frac rm} \leq a_{q,\poly^n_m + r} \leq a_{p,\poly^n_m + r} \leq \poly^n_{m+1} + r + \ceil{\frac rm}$$
    and therefore $a_{q,N} = a_{p,N} = \poly^n_{m+1} + r + \ceil{r/m}.$
\end{proof}

Recall from \Cref{sec:relatinginstancesequences},
$$a_{\,\sqmot,N} = a_{\,\zmot,N} \geq a_{\,\tttmo,N} = a_{\,\lmot,N}$$
for every positive integer $N$.
Since $\sqmo$ is a P 4-omino, and $\lmo$ is an L 4-omino, \Cref{thm:main} gives a formula for the instance sequences of all these tetrominoes. Since $\poly^4_m = m^2$, we arrive at the following.

\begin{corollary}\label{thm:tetromino_parameterized}
    For any positive integer N, there exists a unique positive integer $m$ and unique integer $r$ such that $0 \leq r \leq 2m$ and $N = m^2+r$. Furthermore, if $p$ is $\sqmo$, $\zmo$, $\ttmo$, or $\lmo$ then
    $$a_{p,N} = (m+1)^2 + r + \ceil{\frac rm}.$$
\end{corollary}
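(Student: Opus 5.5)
This corollary follows by specializing \Cref{thm:main} to $n=4$ and then using the chain of inequalities \eqref{eq:tetrominoranking} from \Cref{sec:relatinginstancesequences}.

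\emph{Decomposition of $N$.} By \Cref{lem:polydecompose} with $n=4$, there are a unique positive integer $m$ and a unique integer $r$ with $0 \le r \le (4-2)m = 2m$ and $N = \poly^4_m + r$. Since $\poly^4_m = m + 2t_{m-1} = m + m(m-1) = m^2$ by \Cref{lem:Conway}, this is exactly the decomposition $N = m^2 + r$ in the statement. Likewise $\poly^4_{m+1} = (m+1)^2$.

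\emph{The square and L tetrominoes.} The square tetromino $\sqmo$ is left-aligned with two rows of two cells each. It therefore has $2$ cells on top and $n-2=2$ on the bottom, so it is a P $4$-omino. The L tetromino $\lmo$ is left-aligned with $1$ cell on top and $3 = n-1$ on the bottom, so it is an L $4$-omino. \Cref{thm:main} then gives
$$a_{\,\sqmot,N} = a_{\,\lmot,N} = (m+1)^2 + r + \ceil{\frac rm}.$$

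\emph{The remaining two tetrominoes.} By \eqref{eq:tetrominoranking},
$$a_{\,\sqmot,N} = a_{\,\zmot,N} \geq a_{\,\tttmo,N} = a_{\,\lmot,N}.$$
The two outer quantities are equal by the previous step. So every term in the chain equals $(m+1)^2 + r + \ceil{r/m}$, which covers $\zmo$ and $\ttmo$ as well.

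The only point needing care is that the orientations of $\sqmo$ and $\lmo$ match the definitions of P and L $n$-ominoes as fixed polyominoes, i.e.\ left-aligned with the short row on top. Both do as drawn, so nothing here is a real obstacle.
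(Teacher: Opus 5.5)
Your proposal is correct and follows essentially the same route as the paper: identify $\sqmo$ as a P $4$-omino and $\lmo$ as an L $4$-omino, apply \Cref{thm:main} with $n=4$ and $\poly^4_m = m^2$, and then use the chain \eqref{eq:tetrominoranking} to force $a_{\,\zmot,N}$ and $a_{\,\tttmo,N}$ to the same value. Your explicit check that $\poly^4_m = m + 2t_{m-1} = m^2$ just spells out a step the paper states without computation.
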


By similar reasoning, and in reference to \Cref{fig:posets}, we arrive at an analogous result for the L, N, P, W, and Y pentominoes.

\begin{corollary}\label{thm:Ppentomimo_parameterized}
    For any positive integer N, there exists a unique positive integer $m$ and unique integer $r$ such that $0 \leq r \leq 3m$ and $N = \poly^5_m+r$. Furthermore, if $p$ is $\lpmo$,$\smo$, $\ppmo$, $\wmo$, or $\ypmo$  then
    $$a_{p,N} = \poly^5_{m+1}+r + \ceil{\frac rm}.$$
\end{corollary}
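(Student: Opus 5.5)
The plan is to run the tetromino argument again with $n=5$. The upper end of the chain is the P pentomino $\ppmo$, which is a P $5$-omino, and the lower end is the L pentomino $\lpmo$, which is an L $5$-omino. \Cref{thm:main} with $n=5$ gives both of them the common instance sequence $a_{p,N} = \poly^5_{m+1}+r+\ceil{r/m}$, where $N=\poly^5_m+r$ and $0\le r\le 3m$; the existence and uniqueness of $m$ and $r$ come from \Cref{lem:polydecompose}. It then suffices to sandwich the remaining three pentominoes, $\smo$ (N), $\wmo$ (W) and $\ypmo$ (Y), between these two sequences, as \Cref{fig:posets} indicates.

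For each of N, W and Y, I would write down an explicit chain of row shifts $\rho^{\pm1}$, column slides $\sigma$ and row slides $\sigma'$, allowing rotations and reflections. The chain should start at a rotated copy of $\ppmo$, pass through the pentomino in question, and end at a rotated copy of $\lpmo$. Rotations and reflections preserve instance sequences, since they map instances to instances bijectively. \Cref{lem:shiftorder} gives equality along each row shift, but it needs every intermediate shape to be a polyomino, so this must be checked at each step. \Cref{thm:slideorder} and its $\sigma'$ analogue give $\geq$ along each slide. Together these yield $a_{\,\tppmo,N}\ge a_{p,N}\ge a_{\,\tlpmo,N}$ for each $p$, and the two outer terms are equal by \Cref{thm:main}, so the middle term is forced.

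The chains I would try are the following.
\begin{itemize}
\item W: the staircase $\{(0,2),(0,1),(1,1),(1,0),(2,0)\}$ becomes $\{(0,2),(1,1),(2,1),(2,0),(3,0)\}$ after a suitable shear; this is connected and gives an N-type shape. Column sliding then gives a Y or L shape.
\item N and Y: a row shift of Y, or of a rotation of it, should give P. For example, shearing the P pentomino $\{(0,0),(1,0),(2,0),(0,1),(1,1)\}$ in the appropriate direction moves the top row relative to the bottom row and should produce N or Y.
\item To reach L: sliding the columns of Y or N to the bottom should give a left-/corner-aligned two-row shape with one cell on top, that is, L or a reflection of it.
\end{itemize}

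The main obstacle is bookkeeping. For each of the three pentominoes I must verify the direction of the shear, that no intermediate set is disconnected, and that the correct orientation of the slide is used (column versus row, and bottom versus top via a reflection). I would do this by listing coordinates for each step, as in the W example, and this would reproduce the arrows of \Cref{fig:posets}.
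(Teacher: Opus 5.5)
Your strategy matches the paper's. It applies \Cref{thm:main} with $n=5$ to P and L and reads the relations for the other three pentominoes off \Cref{fig:posets}. The gap is that the explicit chains you propose are the only content beyond \Cref{thm:main}, and each of them is wrong, so the verification you plan would fail.
\begin{itemize}
\item Your ``shear'' of $W=\{(0,2),(0,1),(1,1),(1,0),(2,0)\}$ moves its rows by $0,1,1$ from top to bottom. That is not a row shift, since powers of $\rho$ move consecutive rows by amounts in arithmetic progression. Moreover the image $\{(0,2),(1,1),(2,1),(2,0),(3,0)\}$ is disconnected, because $(0,2)$ has no neighbor in it.
\item No row shift of any orientation of Y yields P, and no shear of P yields Y. A row shift preserves the number of cells in each row, and Y has row lengths $\{4,1\}$ or $\{2,1,1,1\}$, while P has $\{3,2\}$ or $\{2,2,1\}$.
\item No slide of N gives L: $\sigma$ sends a horizontal N to Y and a vertical N to P.
\end{itemize}
In short, you have put Y in the shift class of P and placed the slide between $\{\text{N},\text{Y}\}$ and L. In fact Y is shift-equivalent to L, and the slide is the step from N to Y.

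The correct picture mirrors \eqref{eq:tetrominoranking} exactly. With $\rho(x,y)=(x-y,y)$, one has
\[ \ppmo \xleftrightarrow{\ \rho\ } \smo \xrightarrow{\ \sigma\ } \ypmo \xleftrightarrow{\ \rho\ } \lpmo . \]
Here $\rho$ sends $\{(0,0),(1,0),(2,0),(0,1),(1,1)\}$ to $\{(0,0),(1,0),(2,0),(-1,1),(0,1)\}$, which is N. Bottom-aligning the columns of this N gives $\{(-1,0),(0,0),(1,0),(2,0),(0,1)\}$, which is Y. Applying $\rho$ to Y moves its top cell over the left end of the bottom row, giving L. For W, let $P'=\{(1,0),(2,0),(1,1),(2,1),(2,2)\}$, a $90^\circ$ rotation of P; then $\rho(P')=\{(1,0),(2,0),(0,1),(1,1),(0,2)\}$ is W. All of these sets are polyominoes, so \Cref{lem:shiftorder}, \Cref{thm:slideorder} and rotation invariance give $a_{\,\tppmo,N}=a_{\,\smo,N}=a_{\,\wmo,N}\ge a_{\,\ypmo,N}=a_{\,\tlpmo,N}$. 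With these chains your sandwich argument finishes the proof as you intended.
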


\section{The F, T, V, X, and Z Pentominoes}\label{sec:otherpentominoes}

In this section, we find a formula for $a_{p,N}$ where $p$ is any of the pentominoes $\xmo$, $\zpmo$, $\tpmo$, $\fmo$, or $\vpmo$. We employ the same method that we used to prove \Cref{thm:trominomain} and  \Cref{thm:main}, finding an upper bound for $a_{\,\txmo,N}$ and $a_{\,\tfmo,N}$ which equals a lower bound for $a_{\,\tvpmo,N}$, and using the fact that
$$\ a_{\,\txmo,N} = a_{\,\tzpmo,N} \geq a_{\,\ttpmo,N} \geq a_{\,\tvpmo,N} \quad \text{ and } \quad a_{\,\tfmo,N} \geq a_{\,\ttpmo,N} \geq a_{\,\tvpmo,N}$$
for every positive integer $N$, as shown in \Cref{fig:posets}.

The \bfb{centered square numbers}, given by the sequence $s_m = m^2 + (m-1)^2$, play a central role in this section. Conway and Guy include these in their discussion of polygonal numbers in \textit{The Book of Numbers} \cite{Conway}. We will make use of the following lemma.

\begin{lemma}\label{lem:Xmorecursion}
    For any positive integer $N$, there exist a unique positive integer $m$ and unique integer $r$ such that $0 \leq r < 4m$ and $N = s_m + r$.
\end{lemma}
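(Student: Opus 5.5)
The plan mirrors the proof of \Cref{lem:polydecompose}: show that the centered square numbers form a strictly increasing sequence of positive integers starting at $s_1 = 1$, with consecutive gaps exactly $4m$. First compute
\begin{equation*}
    s_{m+1} - s_m = (m+1)^2 + m^2 - m^2 - (m-1)^2 = (m+1)^2 - (m-1)^2 = 4m,
\end{equation*}
so $s_{m+1} = s_m + 4m$. In particular $(s_m)_{m=1}^\infty$ is strictly increasing, since $4m > 0$, and unbounded, and $s_1 = 1^2 + 0^2 = 1$.

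For existence, let $N$ be a positive integer. Since $s_1 = 1 \leq N$ and the sequence is unbounded, the set $\{m \geq 1 : s_m \leq N\}$ is nonempty and finite. Take $m$ to be its maximum and set $r = N - s_m$. Then $r \geq 0$. Maximality of $m$ gives $N < s_{m+1} = s_m + 4m$, so $r < 4m$.

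For uniqueness, suppose $N = s_m + r = s_{m'} + r'$ with $0 \leq r < 4m$ and $0 \leq r' < 4m'$, and assume without loss of generality that $m < m'$. Then
\begin{equation*}
    N = s_m + r < s_m + 4m = s_{m+1} \leq s_{m'} \leq N,
\end{equation*}
which is a contradiction. Hence $m = m'$, and then $r = r'$. Equivalently, the half-open intervals $[s_m, s_{m+1})$ partition the positive integers.

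There is no real obstacle here. The only point to get right is that the bound on $r$ is strict, $r < 4m$. This differs from \Cref{lem:polydecompose}, where $r \leq (n-2)m$ is allowed because the gap there is $(n-2)m + 1$. Here the gap is exactly $4m$, so the strict inequality is exactly what makes the intervals disjoint.
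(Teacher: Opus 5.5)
Your proof is correct and follows the same approach as the paper: the paper cites exactly the facts that $(s_m)$ is increasing with $s_1 = 1$ and that $s_{m+1} - s_m = 4m$. You supply the existence and uniqueness details that the paper leaves implicit.
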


\begin{proof}
    This follows immediately from the fact that the centered square numbers are an increasing sequence of integers starting from $s_1=1$, and from the fact that $s_{m+1} - s_m = 4m$.
\end{proof}

\subsection{A Construction for X Pentominoes}

\begin{figure}
    \centering
    \input{Images/XPentominoMultiples}
    \caption{Multiples of  ${\protect\xmo}$ superimposed.}
    \label{fig: XMultiples}
\end{figure}

Several multiples of the X pentomino $\xmo$ are depicted in \Cref{fig: XMultiples}. The shape of $m \,\xmo$ is called the \bfb{Aztec diamond of order $m+1/2$} by Knuth \cite{Knuth}, which we denote $A_m$. The shape consists of all cells within $m$ orthogonal steps of some central cell.

If the cells of $A_m$ are colored like a checkerboard, the cells of each color form two diamond arrays, one having $(m+1)^2$ cells and the other having $m^2$ cells. Thus $|A_m| = s_{m+1}$. Indeed, this is where the centered square numbers get their name.

\begin{lemma}
    For any positive integer $m$, the multiple $m\,\xmo$ has shape $A_m$.
\end{lemma}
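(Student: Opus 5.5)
We identify cells with their integer coordinates and work with the sumset description of polyomino addition. Take the instance $X = \{(0,0),(1,0),(-1,0),(0,1),(0,-1)\}$ of $\xmo$. This is exactly the set of lattice points $(a,b)$ with $|a|+|b| \leq 1$. Likewise, $A_m$ centered at the origin is the set of lattice points with $|a|+|b| \leq m$, since a cell is within $m$ orthogonal steps of the central cell exactly when its $\ell^1$ distance from it is at most $m$. So it suffices to show that the $m$-fold sumset $mX$ equals
\[
B_m = \{(a,b) \in \mathbb{Z}^2 : |a|+|b| \leq m\}.
\]
We prove this by induction on $m$.

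The base case $m=1$ is immediate, since $X = B_1$. For the inductive step, assume $(m-1)X = B_{m-1}$. Because polyomino addition is associative, $mX = B_{m-1} + X$, so we must show $B_{m-1} + X = B_m$.

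For the inclusion $B_{m-1} + X \subseteq B_m$, use the triangle inequality for the $\ell^1$ norm: if $|v|_1 \leq m-1$ and $|e|_1 \leq 1$, then $|v+e|_1 \leq m$.

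For the reverse inclusion, take $w=(a,b)$ with $|a|+|b| \leq m$. If $w=0$, write $w = 0 + 0$, using $0 \in B_{m-1}$ and $0 \in X$. Otherwise $w \neq 0$, and we may assume without loss of generality that $a \neq 0$. Let $e = (\operatorname{sgn}(a),0) \in X$. Then $w - e = (a - \operatorname{sgn}(a), b)$ has $\ell^1$ norm $|a|-1+|b| \leq m-1$, so $w - e \in B_{m-1}$ and hence $w = (w-e) + e \in B_{m-1}+X$. If instead $a=0$, the same argument works with $e=(0,\operatorname{sgn}(b))$. This completes the induction, and so $m\,\xmo$ has shape $A_m$.

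The only step that needs care is this reverse inclusion, where we must choose a unit step $e$ that strictly decreases the $\ell^1$ norm. Stepping toward the origin along a nonzero coordinate always does this, so no real obstacle arises.

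As a consistency check, not needed for the proof, the count $|B_m| = s_{m+1} = (m+1)^2+m^2$ agrees with the checkerboard description given before the statement.
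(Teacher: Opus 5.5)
Your proof is correct and follows the same route as the paper: induction on $m$, writing the $m$th multiple as the previous multiple plus one more X pentomino, and identifying the result as the set of cells within $m$ orthogonal steps of the center. Your $\ell^1$-ball formulation makes explicit the two inclusions (triangle inequality, and stepping toward the origin) that the paper states in a single sentence.
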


\begin{proof}
    We proceed by induction on $m$. For our base case, $m=1$, note that $\xmo$ itself is $A_1$. We hypothesize that for some integer $m > 1$, $(m-1)\,\xmo$ has shape $A_{m-1}$.

    We compute $m\,\xmo$ by taking the union of all instances of $\xmo$ with a center cell in some fixed instance of $(m-1)\,\xmo$. This union includes the cells of $(m-1)\,\xmo$ and also every cell that is one orthogonal step from any cell in $(m-1)\,\xmo$. Consequently, these are the cells within $m$ orthogonal steps of the central cell of $(m-1)\,\xmo$, which form the shape $A_{m}$. 
\end{proof}

    Each cell in $m\,\xmo$ corresponds to one instance of $\xmo$ in $(m+1) \xmo,$ so in general $A_m$ has $s_{m+1}$ cells and at least $s_m$ instances of $\xmo$.
    This shows that
    $$a_{\,\txmo,s_m} \leq s_{m+1}.$$

\begin{figure}
    \centering
    \input{Images/SpiralDiamond}
    \caption{From left to right, top to bottom, these polyominoes respectively
contain $s_3=13$, 14, 15, 16, 17, 18, 19, 20, 21, 22, 23, $s_4-1=24$ instances of~${\protect \xmo}$.}
    \label{fig:SpiralDiamond}
\end{figure}
We can generalize this upper bound by adding cells around the perimeter of the shape, starting to the right of the top row and working clockwise, as in \Cref{fig:SpiralDiamond}.
The general upper bound is described by the following theorem.

\begin{lemma}\label{lem:Xmo_upper_bound}
    For any positive integer $m$ and integer $r$ such that $1 \leq r < 4m$,
    $$a_{\,\txmo,s_m} \leq s_{m+1}$$
    and
    $$a_{\,\txmo,s_m+r} \leq s_{m+1} + r + \floor{ \frac rm} + 1.$$
\end{lemma}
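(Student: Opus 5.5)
The plan is an explicit construction, the one pictured in \Cref{fig:SpiralDiamond}. The first inequality is just the fact, recorded before the statement, that $A_m = m\,\xmo$ has $s_{m+1}$ cells and at least $s_m$ instances of $\xmo$. For $1 \leq r < 4m$ we enlarge $A_m$ by cells of the next layer, so we work in coordinates centred at the central cell of $A_m$. Then $A_m$ is the set of cells $(a,b)$ with $|a|+|b| \leq m$, and the layer $L = \{|a|+|b| = m+1\}$ has $4m+4$ cells. Listing $L$ clockwise starting just right of the top row gives
$$c_1=(1,m),\ c_2=(2,m-1),\ \dots,\ c_m=(m,1),\ c_{m+1}=(m+1,0),\ c_{m+2}=(m,-1),\ \dots,$$
continuing around the diamond. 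Let $P_k = A_m \cup \{c_1,\dots,c_k\}$. This is a polyomino: every $c_i$ touches $A_m$, since $c_1$ is adjacent to $(0,m)$.

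The key step is to count the new instances of $\xmo$ in $P_k$. A new instance must have its centre on the boundary layer $\{|a|+|b|=m\}$ of $A_m$. Such a centre $x$ gives a new instance exactly when all of its neighbours in $L$ have been added.
\begin{itemize}
\item A non-tip boundary cell, say $(a,b)$ with $a,b>0$, has exactly two neighbours in $L$, namely $(a+1,b)$ and $(a,b+1)$, and these are consecutive in the clockwise order.
\item A tip such as $(m,0)$ has three neighbours in $L$: $(m,1)$, $(m+1,0)$, $(m,-1)$. These are also consecutive in the order.
\end{itemize}
So as $k$ increases, the $k$th cell completes one new instance precisely when $c_{k-1}$ and $c_k$ are neighbours of a common boundary cell, with two exceptions: the first cell $c_1$ completes nothing, and the cell placed at each outer tip $(m+1,0),(0,-m-1),\dots$ completes nothing, because the tip of $A_m$ also needs the following cell.

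For example, $c_1$ gives $0$ instances; $c_2,\dots,c_m$ give one each, completing the centres $(1,m-1),\dots,(m-1,1)$; $c_{m+1}$ gives none; and $c_{m+2}$ completes the tip $(m,0)$. Each quadrant contributes $m$ instances for $m+1$ cells. A short induction over the quadrants then shows that $r$ new instances are reached with exactly
$$k = r + \floor{r/m} + 1$$
cells, for $1 \leq r < 4m$. The boundary case $r = jm$ uses $jm + j + 1$ cells, with the tip cell of the $j$th side accounting for the extra $+j$.

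Since $r < 4m$ forces $k \leq 4m+3$, we never need the last cells $c_{4m+3}$ and $c_{4m+4}$. Those are the cells that would close up at the original top tip $(0,m)$. Consequently $P_k$ has $s_{m+1} + r + \floor{r/m} + 1$ cells and at least $s_m + r$ instances, which proves the bound.

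The main obstacle is the bookkeeping at the four corners of the spiral. We have to verify that each corner costs exactly one extra cell and produces no stray instances. We also have to check that the wrap-around near the starting tip $(0,m)$ never arises for $r < 4m$. I would handle this by a uniform description: write the instance count after $k$ cells as $k - 1 - \floor{(k-1)/(m+1)}$ for $1 \leq k \leq 4m+3$. I would check this formula on one quadrant and then invoke the rotational symmetry of $A_m$. Inverting the formula gives the stated cell count.
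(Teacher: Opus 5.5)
Your construction and counting are the same as the paper's. The paper adds exactly your cells $c_1,\dots,c_{4m+3}$ in the same clockwise order: right ends of rows top-down, then the cell below the centre column, then left ends bottom-up. It notes that each new instance costs one cell except the 1st, $m$th, $2m$th and $3m$th, which cost two, for $1+r+\lfloor r/m\rfloor$ cells in total. Your coordinate bookkeeping is a more careful version of that argument.

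Two slips in your write-up need fixing.

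\emph{The uniform formula is off by one at the tip cells.} Your formula $k-1-\lfloor (k-1)/(m+1)\rfloor$ gives $m$ at $k=m+1$. Your own itemised count gives $m-1$ there, because $c_{m+1}$ completes nothing. Inverting your formula would say $r=jm$ is reached with $jm+j$ cells, which contradicts your (correct) boundary claim of $jm+j+1$.

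The correct count follows directly from your own rule. Every $c_k$ with $k\ge 2$ completes the unique common boundary neighbour of $c_{k-1}$ and $c_k$, except the tip cells $c_{m+1},c_{2m+2},c_{3m+3}$. So the count is $k-1-\lfloor k/(m+1)\rfloor$ for $1\le k\le 4m+3$. Inverting this gives exactly $k=r+\lfloor r/m\rfloor+1$.

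\emph{The cell $c_{4m+3}=(-1,m)$ is needed.} For $r=4m-1$ it completes the centre $(-1,m-1)$. Only $c_{4m+4}=(0,m+1)$ is never added. That alone ensures the top tip $(0,m)$ is never counted, so no wrap-around occurs.
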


\begin{proof}
    We have already shown that $a_{\,\txmo,s_m} \leq s_{m+1}$ because $m\,\xmo$ has $s_{m+1}$ cells and at least $s_m$ instances of $\xmo$.
    To prove the second inequality, we will construct a polyomino $P$ with $s_{m+1} + r + \floor{ r/m} + 1$ cells having at least $s_m + r$ instances of $\xmo$. 
    
    We follow the construction depicted in \Cref{fig:SpiralDiamond}, adding cells to $mp$. We first add one cell to the right end of each row, working from the top down. We then add a new cell to the bottom of the center column of $mp$. Lastly we add a cell to the left end of each row of $mp$, working from the bottom up. We terminate the process as soon as $r$ new instances of $\xmo$ have been created.

    In this construction, we usually add one cell to create each new instance of $\xmo$, but we add two cells in order to create the 1st, $m$th, $2m$th, and $3m$th instances. The number of cells needed to create $r$ new instances of $\xmo$ is then $1+r + \floor{r/m}$, for a total of $s_{m+1} + r + \floor{ r/m} + 1$ cells and at least $s_m+r$ instances of $\xmo$. 
\end{proof}

\subsection{A Construction for F Pentominoes}

We give a construction for $(\fmo, N)$-dense polyominoes by modifying our construction from the previous section.

\begin{lemma}\label{lem:Fmobound}
    For any positive integer $m$ and integer $r$ such that $1 \leq r < 4m$,
    $$a_{\,\tfmo,s_m} \leq s_{m+1}$$
    and
    $$a_{\,\tfmo,s_m+r} \leq s_{m+1} + r + \floor{ \frac rm} + 1.$$
\end{lemma}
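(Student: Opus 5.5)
We will not build a new shape from scratch; instead we will transport the X pentomino construction of \Cref{lem:Xmo_upper_bound} to the F pentomino through a cell-wise bijection that preserves instance counts. The F pentomino $\fmo$ has cells $\{(0,0),(0,1),(-1,1),(0,2),(1,2)\}$, while the X pentomino has $\{(0,0),(0,1),(-1,1),(0,2),(1,1)\}$. The only difference is that the right arm sits one row higher in F. This suggests a shear-type transformation acting column-by-column, analogous to the row shift $\rho$ of \Cref{lem:shiftorder}. However, no single global shear maps X to F, since the left arm and the right arm move differently. So the plan is to work directly with an explicit family of shapes instead.

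For each $m$, we will define a ``sheared Aztec diamond'' $B_m$. Index the cells of $A_m$ by the checkerboard decomposition and apply the map $(x,y)\mapsto(x,\,y+\max(x,0))$, or a similar piecewise-linear lift of the right half. We will choose the map so that every instance of $\xmo$ in $A_m$ whose center lies in a central region goes to an instance of $\fmo$. The first concrete step is to verify the intended property: $B_m$ is a polyomino with $s_{m+1}$ cells containing at least $s_m$ instances of $\fmo$.

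The cleaner route, which we will try first, is to compute $m\,\fmo$ directly via polyomino addition and check two facts, as was done for $m\,\xmo$ and for the P $n$-ominoes in \Cref{lem:staircase}:
\begin{itemize}
\item $|m\,\fmo| = s_{m+1}$, which we will prove by induction on $m$ by counting the cells added to $(m-1)\,\fmo$ when each of its cells is replaced by a copy of $\fmo$ anchored at the bottom cell;
\item each cell of $(m-1)\,\fmo$ gives an instance of $\fmo$ inside $m\,\fmo$, so $m\,\fmo$ has at least $|(m-1)\,\fmo| = s_m$ instances.
\end{itemize}
For the size count, we hope to show that $m\,\fmo$ is a union of two interleaved ``diamond-like'' arrays of sizes $(m+1)^2$ and $m^2$. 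Failing that, we will show directly that the increment $|m\,\fmo|-|(m-1)\,\fmo|$ equals $4m$, which matches $s_{m+1}-s_m$. Together these give $a_{\,\tfmo,s_m}\le s_{m+1}$, the first inequality.

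For the second inequality, we mimic the spiral of \Cref{fig:SpiralDiamond}. We add cells along the boundary of $m\,\fmo$ in a fixed cyclic order around its four ``sides'', each side meeting one of the four arms of $\fmo$. Each new cell should complete one new instance of $\fmo$ whose other four cells already lie in the current shape. The exceptions are the first cell of each of the four sides, where two cells must be added. With $m$ new instances available per side, creating $r<4m$ instances costs $r+\floor{r/m}+1$ cells, exactly as in the proof of \Cref{lem:Xmo_upper_bound}.

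The main obstacle is the boundary geometry of $m\,\fmo$. Because F is not symmetric, its multiple is a skewed octagon-like shape, not a diamond. We must check that along each side there really is an ordering in which consecutive additions each complete exactly one new instance after the initial two-cell ``corner'' cost, and that each side admits $m$ such instances. We will verify this by describing the sides explicitly as staircases with step patterns determined by the arm directions of $\fmo$. We will check the small cases $m=1,2$ by hand before writing the general induction.
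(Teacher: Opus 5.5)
Your plan for the first inequality works, and your two routes produce the same shape. With $\fmo=\{(0,0),(0,1),(-1,1),(0,2),(1,2)\}$, the sumset $m\,\fmo$ has column $x$ equal to $\{|x|,\dots,2m-|x|\}$ for $-m\le x\le 0$ and $\{2x,\dots,2m\}$ for $0\le x\le m$. Hence $|m\,\fmo|=m^2+(m+1)^2=s_{m+1}$. Moreover $m\,\fmo$ is exactly your sheared diamond $B_m$: it is $A_m$ with column $x>0$ raised by $x$, a quadrilateral rather than an octagon. This is also the paper's shape for $r=0$. Together with the observation that each cell of $(m-1)\,\fmo$ yields an instance, this gives $a_{\,\tfmo,s_m}\le s_{m+1}$.

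The gap is the second inequality. You assert that some order of boundary additions to $m\,\fmo$ costs $r+\floor{r/m}+1$ cells, but that is exactly what needs proof, and you defer it. The missing idea is the one that rescues the shear route you set aside: the shear need not send X pentominoes to F pentominoes cell-wise, only injectively.

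Let $\varphi(x,y)=(x,y+\max(x,0))$, and let $P$ be the polyomino of \Cref{lem:Xmo_upper_bound}, with $A_m$ centered at the origin. Every added cell is at distance $m+1$ from the origin, so every X instance in $P$ has its center $(a,b)$ at distance at most $m$. There are three cases.
\begin{itemize}
\item If $a=0$, the instance maps cell-wise to an F instance.
\item If $a\le -1$, $\varphi$ fixes the instance. Its four left-most cells together with $(a+1,b+1)$, which lies in $A_m$ and is also fixed, form an F.
\item If $a\ge 1$, the image is a reflected Z pentomino. Its four right-most cells together with $\varphi(a-1,b+1)$, where $(a-1,b+1)\in A_m$, form an F.
\end{itemize}
These F instances are distinct, since their middle cells $(a,\,b+\max(a,0))$ are distinct. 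So $\varphi(P)$ has $|P|$ cells and at least $s_m+r$ instances of the F pentomino. Both inequalities then follow at once from \Cref{lem:Xmo_upper_bound}, with no boundary analysis of $m\,\fmo$.

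In your $B_m$ sketch, only the $a=0$ instances were being counted, and $A_m$ has just $2m-1$ of them. That is why the shear looked insufficient.
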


\begin{proof}
    We will construct a polyomino $Q$ with $s_{m+1}$ cells having at least $s_m$ instances of $\fmo$. The argument for $a_{\,\tfmo,s_m+r}$ uses identical reasoning.

    First, we construct a polyomino $P$ which has $s_{m+1}$ cells and at least $s_m$ instances of $\xmo$, as in \Cref{lem:Xmo_upper_bound}. Then, we shift each column to the right of the central column up one cell relative to the column to its left. We call the new polyomino $Q$. We will describe how each instance of $\xmo$ in $P$ corresponds to a unique instance of $\fmo$ in $Q$, as depicted in \Cref{fig:Fmo_construction}.

    Each instance of $\xmo$ having its four left-most cells to the left of the central column of $P$ has its same position in $Q$. It shares those four cells with an instance of $\fmo$ in $Q$.
    
    Each instance of $\xmo$ with three cells along the central column of $P$ is transformed cell-wise into an instance of $\fmo$ in $Q$.

    The remaining instances of $\xmo$ to the right of the central column of $P$ are transformed cell-wise into instances of $\zpmobackwards$ in $Q$. Each shares its four right-most cells with an instance of $\fmo$ in $Q$.
\end{proof}

\begin{figure}
    \centering
    \input{Images/FPentominoShift}
    \caption{In both $P$  (left) and $Q$ (right), we shade the cells of an instance of ${\protect \xmo}$ from $P$, and draw the boundary around the corresponding instance of ${\protect \fmo}$ from $Q$. }
    \label{fig:Fmo_construction}
\end{figure}

\subsection{A Lower Bound for V Pentominoes}
The goal of this section is to give a lower bound on $a_{\,\tvpmo,N}$.
Within this section, we assume the coordinates of all cells are nonnegative, so that all cells rest in the first quadrant of the Cartesian plane.

Within this section we define the \bfb{diagonal} $D_i$, for each nonnegative integer $i$, to be the set of cells in the first quadrant with coordinates along the line $x+y = i$. (This is a special case of the definition of a diagonal used in \Cref{sec:lowerbounds}.) We define the \bfb{subdiagonals} $D^E_i$ and $D^O_i$ to be the subsets of $D_i$ of cells having even and odd $y$-coordinates respectively; we exclude $D^O_0$, which would be empty. We call the intersection of a subdiagonal with a polyomino a subdiagonal of that polyomino. We start with some technical lemmas about subdiagonals.

\begin{lemma}\label{lem:subdiagbound}
    For any positive integer $N$, if $P$ is a $(\vpmo,N)$-dense polyomino with $k$ subdiagonals, then $|P| \geq N+k$.
\end{lemma}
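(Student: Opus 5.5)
The plan is to copy the argument of \Cref{lem:diagonalbonus}, with subdiagonals taking the place of diagonals. In both lemmas the key fact is that two extreme cells of every instance of the pattern lie in the same diagonal-type set.

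First I fix coordinates for $\vpmo$. Put its corner cell at $(x,y)$. Then the instance is
$$\{(x,y),(x,y+1),(x,y+2),(x+1,y),(x+2,y)\}.$$
Its top cell is $(x,y+2)$ and its right-most cell is $(x+2,y)$. Both lie on the line with coordinate sum $x+y+2$, so both belong to $D_{x+y+2}$. Their $y$-coordinates, $y+2$ and $y$, have the same parity, so both belong to the same subdiagonal, $D^E_{x+y+2}$ or $D^O_{x+y+2}$. Within that subdiagonal the right cell has the strictly larger $x$-coordinate. Hence the top cell of an instance of $\vpmo$ in $P$ is never the right-most cell of its subdiagonal of $P$.

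Next I count cells of $P$. Since $P$ is $(\vpmo,N)$-dense, it contains at least $N$ instances of $\vpmo$. These are distinct translates, so their top cells are $N$ distinct cells of $P$. Each of the $k$ subdiagonals of $P$ is nonempty and has a right-most cell. Subdiagonals are pairwise disjoint, so these right-most cells are $k$ distinct cells. By the previous step, no top cell of an instance is one of these right-most cells. Together they give $N+k$ distinct cells of $P$, so $|P|\ge N+k$.

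I do not expect a real obstacle. The one point needing care is the parity check: the subdiagonals are defined by the parity of $y$, and the top and right cells have $y$-coordinates differing by exactly $2$. The standing first-quadrant assumption is only needed so that the subdiagonals are defined; translating $P$ into the first quadrant changes neither $N$ nor $k$. That $D^O_0$ is excluded is harmless, because it is empty.
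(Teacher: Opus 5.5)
Your proposal is correct and matches the paper's proof: the top cells of the $N$ instances of \vpmo{} and the right-most cells of the $k$ subdiagonals of $P$ are disjoint sets, because the top and right-most cells of any instance lie in the same subdiagonal. Your explicit coordinates and parity check spell out the fact the paper only asserts, and your side remark that translation preserves $k$ is also correct.
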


\begin{proof}
    Notice that $P$ has $N$ cells that are the top-most cell of an instance of $\vpmo$. 
    None of these can be the right-most cell of any subdiagonal of $P$, as the top-most cell and right-most cell of any instance of $\vpmo$ belong to the same subdiagonal.
    Therefore the top-most cells of instances of $\vpmo$, together with the right-most cells of each subdiagonal of $P$, form $N+k$ distinct cells in $P$.
\end{proof}

We may assume by \Cref{thm:corner-aligned} that the $(\vpmo, N)$-dense polyominoes we discuss are corner-aligned, and we may assume they have a cell with coordinates $(0,0)$.
Recall, we call such polyominoes \bfb{quadrant-aligned}.

\begin{lemma}\label{lem:diagorder}
    For any positive integer $N$, let $P$ be a quadrant-aligned $(\vpmo,N)$-dense polyomino. If $P$ intersects a subdiagonal, then $P$ intersects every subdiagonal of lower index.
\end{lemma}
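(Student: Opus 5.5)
We will argue by induction on the index of subdiagonals, as in the proof of \Cref{lem:diagorder_first}. We order the subdiagonals first by diagonal index and, within a diagonal, by parity, so the list reads $D^E_0, D^E_1, D^O_1, D^E_2, D^O_2, \dots$. Two facts will be used throughout. First, $P$ is quadrant-aligned, so it is closed under moving left and moving down: if $(x,y)\in P$ then $(x',y')\in P$ whenever $0\le x'\le x$ and $0\le y'\le y$. Second, $P$ is dense, so by \Cref{lem:connected} and minimality every cell of $P$ lies in some instance of $\vpmo$. We write such an instance by its corner cell $(a,b)$; it consists of $(a,b),(a+1,b),(a+2,b),(a,b+1),(a,b+2)$.

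The main step is to show that if $P$ contains a cell $(x,y)$ on $D_k$ with $k\ge 1$, then $P$ meets both subdiagonals of $D_{k-1}$ (only $D^E_{0}$ when $k=1$). We split into three cases.
\begin{itemize}
\item If $x,y>0$, closure gives $(x-1,y)$ and $(x,y-1)$ in $P$. These cells lie on $D_{k-1}$ and have $y$-coordinates of opposite parity.
\item If $y=0$, then $(k-1,0)\in P$ covers $D^E_{k-1}$. For the odd subdiagonal, take an instance of $\vpmo$ containing $(k,0)$. Its corner is $(a,0)$ with $k-2\le a\le k$, and it contains $(a,1)$. Closure then gives $(k-2,1)\in P$ when $k\ge2$, and this cell lies in $D^O_{k-1}$.
\item If $x=0$, the argument is symmetric. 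An instance containing $(0,k)$ has corner $(0,b)$ with $k-2\le b\le k$ and contains $(1,b)$. Closure gives $(1,k-2)\in P$, and this cell and $(0,k-1)$ lie on opposite subdiagonals of $D_{k-1}$.
\end{itemize}
Iterating this step downward through the diagonals shows that every subdiagonal of every lower diagonal meets $P$.

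What remains is the within-diagonal step: if $P$ meets $D^O_k$, then $P$ must also meet $D^E_k$. We expect this to be the main obstacle. Closure alone moves a cell onto a lower diagonal, so we must again use the covering instance of $\vpmo$. Let $(x,y)\in D^O_k$ lie in an instance with corner $(a,b)$. The instance, together with closure, supplies cells $(a+2,j)$ for $j\le b$ and $(i,b+2)$ for $i\le a$. We will try to choose among these a cell on $D_k$ with even $y$-coordinate. The subcase that resists this is when $(x,y)$ is the top cell $(a,b+2)$, because the natural partner $(a+2,b)$ then has the same parity. There we will look at the instance covering a neighbouring cell such as $(a+1,b)$ or $(a,b+1)$.

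If this approach fails, the fallback is to drop the within-diagonal claim entirely. \Cref{lem:subdiagbound} only needs a count of subdiagonals, so we would instead show that the intersected subdiagonals are exactly those of index below some threshold, up to at most one missing odd subdiagonal on the top diagonal. We would then carry that possible defect into the counting lemma that follows.
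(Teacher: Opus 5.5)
Your three-case main step, followed by iterating downward through the diagonals, is correct and is essentially the paper's proof. The paper splits into the cases $x,y>0$, $y=0$ and $x=0$ exactly as you do. It uses closure in the first case and the instance of $\vpmo$ covering the boundary cell in the other two; your uniform use of $(a,1)$ with $a\ge k-2$ is a slight streamlining of its three subcases.

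That step already finishes the lemma. In the paper both $D^E_i$ and $D^O_i$ have index $i$, so ``every subdiagonal of lower index'' means every subdiagonal of every diagonal $D_j$ with $j<i$. The lemma claims nothing about $D^E_k$ versus $D^O_k$. This is also why \Cref{lem:subdiagcontain} allows the top index $k$ to carry a single subdiagonal $D'_k$ of either parity, bounded by $\lceil (k+1)/2\rceil$ cells.

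The genuine problem is the ``within-diagonal step'' that you impose by ordering $D^E_k$ before $D^O_k$. It is false, so that part of your plan cannot be completed. Take $N=2$ and $P=\{(0,0),(1,0),(2,0),(0,1),(1,1),(2,1),(0,2),(0,3)\}$, the union of the instances of $\vpmo$ with corners $(0,0)$ and $(0,1)$.
\begin{itemize}
\item $P$ is quadrant-aligned and has exactly two instances and $8$ cells.
\item $a_{\,\tvpmo,2}=8$, since two distinct translates of $\vpmo$ overlap in at most two cells. This agrees with \Cref{thm:xmoparameterized} at $m=r=1$, so $P$ is dense.
\item $P$ meets $D^O_3$ at $(2,1)$ and $(0,3)$, but misses $D^E_3=\{(3,0),(1,2)\}$.
\end{itemize}
These two cells are the right-most and top-most cells of the instance with corner $(0,1)$, which is precisely the obstruction you ran into. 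The right-most cell $(a+2,b)$ is just as bad as the top cell: both lie on the extreme diagonal of the instance and have the same parity.

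So stop after your main step. Your fallback is the right reading, but its description needs correcting. The subdiagonal that may be absent on the top diagonal can be of either parity; in this example it is the even one. The later counting argument must therefore not assume the parity of the surviving top subdiagonal.
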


\begin{proof}
    We proceed by induction on the index $k$ of the subdiagonals that intersect $P$. The base cases $k=0$ and $k=1$ follow directly from the quadrant-alignment of $P$.

    For our inductive hypothesis, suppose for some positive integer $k$ that if $P$ intersects a subdiagonal of index $k$ then it intersects every subdiagonal of lower index.
    For our inductive step, suppose $P$ intersects a subdiagonal of index $k+1$ at a cell with coordinates $(x,y)$. Note $x+y=k+1$.

    Consider the case that $x$ and $y$ are both positive. 
    Because $P$ is quadrant-aligned, it also contains the cells at coordinates $(x-1,y)$ and $(x,y-1)$, 
    which belong to the two separate subdiagonals of index $k$. The statement then follows by induction.

    Consider the case that $x \geq 2$ and $y=0$. (The case where $x = 0$ and $y \geq 2$ uses identical reasoning.) Since $P$ is $(\vpmo, N)$-dense, the cell $(x,y)$ belongs to some instance $p$ of $\vpmo$. The cell $(x,y)$ must be in the bottom row of $p$.
    
    If $(x,y)$ is the bottom left cell of $p$, then $P$ contains the cell $(x,y+1)$ which belongs to $p$, so $P$ also contains $(x-1,y+1)$ because $P$ is quadrant-aligned and $x$ is positive. 
    If $(x,y)$ is the bottom middle cell of $p$, then $P$ contains the cell $(x-1,y+1)$ which belongs to $p$. In both of these cases, the cell $(x-1,y+1)$ belongs to the same subdiagonal as $(x,y)$, and has positive coordinates. The statement then follows from the case with positive coordinates.

    If $(x,y)$ is the bottom right cell of $p$, then $P$ contains the cells $(x-1,y)$ and $(x-2,y+1)$ which belong to $p$; these belong to the two separate subdiagonals of index $k$. The statement then follows by induction.
\end{proof}

    \begin{figure}
        \centering

        \input{Images/DiagonalsV}
        
        \caption{Each cell in the first quadrant is labeled with the index of its diagonal. The even subdiagonals are shaded gray. The $k$th diagonal $D_k$ contains $k+1$ cells. Note that the top- and right-most cells of any instance of ${\protect \vpmo}$ in the quadrant are consecutive cells of a subdiagonal. The number of instances of ${\protect \vpmo}$ that have a top- or right-most cell in a subdiagonal is then one less than the number of cells in that subdiagonal, as shown here for $D^E_8$ which has five cells and intersects four instances of ${\protect \vpmo}$ at their top- and right-most cells.}
        \label{fig:Vdiagonal}
    \end{figure}

\begin{lemma}\label{lem:subdiagcontain}
    For any nonnegative integer $M$, let $P$ be the union of $M$ subdiagonals, among which the highest index is $k$, such that every subdiagonal of index less than $k$ is included in the union.
    There exist unique integers $m$ and $u$ such that $M = 4m+u$ and $0 \leq u \leq 3$. Furthermore, $P$ has at most $s_m$ instances of $\vpmo$ for $u=0$, and at most $s_m+mu-1$ instances of $\vpmo$ for $u=1, 2, 3$.
\end{lemma}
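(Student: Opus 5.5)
The plan is to bound, for each subdiagonal $S$ in the union, how many instances of $\vpmo$ can be charged to $S$, and then add these bounds over an explicit list of the subdiagonals that can occur. First, the indexing: the subdiagonals of index at most $j$ are $D^E_0$ and the pairs $D^E_i, D^O_i$ for $1\le i\le j$, so there are $2j+1$ of them. Hence, if the union has highest index $k$ and contains every subdiagonal of lower index, it contains all subdiagonals of index at most $k-1$ together with either one or both of $D^E_k, D^O_k$. So $M$ determines $k$ and whether the last diagonal is complete. The decomposition $M=4m+u$ is just the quotient-remainder theorem.

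\emph{Charging step.} An instance of $\vpmo$ with bottom-left cell $(a,b)$ has cells $(a,b),(a+1,b),(a+2,b),(a,b+1),(a,b+2)$. These lie in diagonals $a+b$, $a+b+1$, $a+b+2$, $a+b+1$ and $a+b+2$ respectively. Its two cells of largest diagonal index are its top cell $(a,b+2)$ and its right-most cell $(a+2,b)$. These have $y$-coordinates of equal parity, so they lie in the same subdiagonal, and they are consecutive cells of it (consecutive cells of a subdiagonal differ by $(2,-2)$). As in \Cref{fig:Vdiagonal}, I charge each instance in $P$ to the subdiagonal containing its top and right-most cells; that subdiagonal lies in $P$. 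Distinct instances with the same top cell coincide, and a subdiagonal with $c$ cells has $c-1$ consecutive pairs. Each consecutive pair of cells in the first quadrant is the top and right-most cells of exactly one instance lying in the first quadrant. So at most $|S|-1$ instances are charged to $S$, and the number of instances in $P$ is at most $\sum_{S\subseteq P}(|S|-1)$.

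\emph{Counting.} Since $D_i$ has $i+1$ cells, $|D^E_i|=\floor{i/2}+1$ and $|D^O_i|=\ceil{i/2}$, so the charges are $\floor{i/2}$ for $D^E_i$ and $\ceil{i/2}-1$ for $D^O_i$. The charge of $D^E_i$ is always at least that of $D^O_i$. Therefore, when only one subdiagonal of index $k$ is present, the bound is largest when that subdiagonal is $D^E_k$, and I assume this case.
\begin{itemize}
\item For $M=4m$: the union is everything of index at most $2m-1$ plus $D^E_{2m}$. Everything of index at most $2m-1$ has $t_{2m}$ cells in $4m-1$ subdiagonals, giving total charge $t_{2m}-(4m-1)=2m^2-3m+1$. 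Adding $m$ for $D^E_{2m}$ gives $2m^2-2m+1=s_m$.
\item For $u=1$: add $D^O_{2m}$, with charge $m-1$, giving $s_m+m-1$.
\item For $u=2$: add $D^E_{2m+1}$, with charge $m$, giving $s_m+2m-1$.
\item For $u=3$: add $D^O_{2m+1}$, with charge $m$, giving $s_m+3m-1$.
\end{itemize}
This matches the claimed bounds $s_m+mu-1$.

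The main obstacle I expect is the charging step. I must check carefully that the top and right-most cells of every instance are consecutive in one subdiagonal, and that every instance lies in the first quadrant (which holds since all coordinates are nonnegative). I also need to handle the degenerate cases $M=0$ and $m=0$, where the bound reads $s_0=1$ or $-1+\dots$ and must be interpreted as giving no instances; there the direct count is $0$, which is consistent.
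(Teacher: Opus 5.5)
Your proof is correct and is essentially the paper's argument. The paper also charges each instance to the subdiagonal containing its top and right-most cells and bounds the last subdiagonal $D'_k$ by $\lceil (k+1)/2\rceil-1$. It differs only in bookkeeping: it counts the complete diagonals $D_0\cup\dots\cup D_j$ directly as having exactly $t_{j-1}$ instances, and writes out the arithmetic only for $u=0,2$, whereas your uniform charging (total $|P|-M$) gives the same numbers and makes $u=1,3$ and $m=0$ explicit.
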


\begin{proof}
    The existence and uniqueness of $m$ and $u$ follow from the quotient-remainder theorem. It may be helpful to refer to \Cref{fig:Vdiagonal}.

    If $M$ is odd, then $P$ is simply the union of the diagonals $D_0$, $D_1$,..., $D_k$. Then $P$ is a triangular array of shape $k \,\tmo$, which has exactly $t_{k-1}$ instances of~$\vpmo$.

    If $M$ is even, then $P$ only has one subdiagonal of index $k$, which we call $D'_k$.  Since $D_k$ has $k+1$ cells, $D'_k$ has at most  $\ceil{({k+1})/{2}}$ cells. Because no cell in $D'_k$ is immediately below or left of a cell in $P$, any instance of $\vpmo$ in $P$ only shares its top- or right-most cells with $D'_k$.
    The total number of instances of $\vpmo$ in the first quadrant that have a top- or right-most cell in $D'_k$ is one less than the maximum number of cells in $D'_k$, and is therefore at most $\ceil{({k+1})/{2}} - 1$; see \Cref{fig:Vdiagonal}. The number of instances of $\vpmo$ in the union of the subdiagonals of index less than $k$ is $t_{k-2}$. Therefore the number of instances of $\vpmo$ in $P$ is at most $t_{k-2} + \ceil{({k+1})/{2}} - 1$.

    Since $M$ is even, either $u=0$ or $u=2$; we consider these cases separately. If $u=0$ then $M=4m$ and $k = 2m$. Then the number of instances of $\vpmo$ in $P$ is at most
    $$t_{2m-2} + \ceil{({2m+1})/{2}} - 1 = (2m-1)(2m-2)/2 + m =  s_m.$$

    If $u=2$ then $M = 4m+2$ and $k = 2m+1$. Then the number of instances of $\vpmo$ in $P$ is at most
    $$t_{2m-1} + \ceil{({2m+2})/{2}} - 1 = 2m(2m-1)/2 + m+1 -1 = s_m + 2m-1.$$
\end{proof}

\begin{lemma}\label{lem:XLowerbound}
    For any positive integer $m$ and integer $r$ such that $1 \leq r < 4m$,
    $$a_{\,\tvpmo,s_m} \geq s_{m+1}$$
    and
    $$a_{\,\tvpmo,s_m+r} \geq s_{m+1} + r + \floor{ \frac rm} + 1.$$
\end{lemma}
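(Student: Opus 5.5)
We will mirror the proof of \Cref{lem:Lnominolowerbound}, with subdiagonals in place of diagonals. Let $N = s_m + r$ with $0 \le r < 4m$, and let $P$ be a $(\vpmo,N)$-dense polyomino. By \Cref{thm:corner-aligned}, since $\vpmo$ is corner-aligned, we may take $P$ quadrant-aligned. By \Cref{lem:diagorder}, the subdiagonals of $P$ are exactly all subdiagonals up to some highest index $k$. Two cases occur: either both subdiagonals of index $k$ are present, or only one is. In either case $P$ is contained in a union of $M$ subdiagonals of the form treated in \Cref{lem:subdiagcontain}, where $M$ is the number of subdiagonals of $P$. Moreover, $P$ has at most as many instances of $\vpmo$ as that union.

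Write $M = 4m' + u$ with $0 \le u \le 3$. \Cref{lem:subdiagcontain} bounds the number of instances by $s_{m'}$ if $u=0$, and by $s_{m'}+m'u-1$ if $u \in \{1,2,3\}$. These bounds are nondecreasing in $M$. We then invert the bound to find the least $M$ compatible with $N$ instances.

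\textbf{Case $r = 0$.} We need $N = s_m$. Any $M \le 4m-1$ gives at most $s_{m-1} + 3(m-1) - 1 < s_m$ instances, using $s_m - s_{m-1} = 4(m-1)$. Hence $M \ge 4m$.

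\textbf{Case $1 \le r < 4m$.} Every $M \le 4m$ gives at most $s_m < N$ instances, so $M > 4m$. Write $M = 4m + u$. For $u \le 3$ the bound $s_m + mu - 1 \ge s_m + r$ requires $mu \ge r+1$, that is, $u \ge \floor{r/m} + 1$. The value $u = 4$, i.e.\ $M = 4(m+1)$, gives $s_{m+1} = s_m + 4m > N$, consistent with this. So $M \ge 4m + \floor{r/m} + 1$, since $\floor{r/m} + 1 \le 4$.

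Finally, apply \Cref{lem:subdiagbound}: $|P| \ge N + M$. For $r = 0$ this gives $|P| \ge s_m + 4m = s_{m+1}$. For $r \ge 1$ it gives
\[
|P| \ge s_m + r + 4m + \floor{r/m} + 1 = s_{m+1} + r + \floor{r/m} + 1.
\]

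\textbf{Main obstacle.} The step needing the most care is the reduction to \Cref{lem:subdiagcontain}. That lemma, as stated, concerns unions in which every subdiagonal of index less than $k$ is included. When $P$ contains both subdiagonals of index $k$, its subdiagonal set is "all of index $\le k$". When it contains only one, it is "all of index $< k$ plus one of index $k$". The lemma's proof treats exactly these two shapes, via the odd- and even-$M$ cases.

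One must also check that the bound's use of "maximum cells in $D'_k$" covers whichever subdiagonal ($D^E_k$ or $D^O_k$) is present. The even subdiagonal is the larger one, so taking the ceiling handles both.

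A second subtlety is that $P$ is only a subset of the union, not the whole union. The count of instances is monotone under inclusion, however, so this causes no trouble.
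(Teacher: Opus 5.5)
Your proposal is correct and follows essentially the same route as the paper. You take a quadrant-aligned dense polyomino, use \Cref{lem:diagorder} and \Cref{lem:subdiagcontain} to force at least $4m$ (respectively $4m+\lfloor r/m\rfloor+1$) subdiagonals, and finish with \Cref{lem:subdiagbound}; the only difference is that you make explicit the monotonicity of the instance bound in $M$ and invert it directly to $u \ge \lfloor r/m\rfloor + 1$, whereas the paper phrases this as $\lceil (r+1)/m\rceil$ extra subdiagonals and then notes $\lceil (r+1)/m\rceil = \lfloor r/m\rfloor+1$.
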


\begin{proof}
    It follows from \Cref{thm:corner-aligned} that for any positive integer $N$ there exists a $(\vpmo, N)$-dense polyomino that is quadrant-aligned. By \Cref{lem:diagorder} if such a polyomino intersects a subdiagonal, then that polyomino intersects every subdiagonal of lower index. 

    To prove the bound for $a_{\,\tvpmo,s_m}$, we consider $P$ a quadrant-aligned $(\tvpmo, s_m)$-dense polyomino.  As $P$ is a subset of the union of the diagonals it intersects, by \Cref{lem:subdiagcontain}, $P$ has at least $4m$ subdiagonals. It follows from  \Cref{lem:subdiagbound} that
    $|P| \geq s_m + 4m = s_{m+1}$, which shows $a_{\,\tvpmo,s_m} \geq s_{m+1}$.

    The argument for $a_{\,\tvpmo,s_m+r}$ is similar. Consider $P$ a quadrant-aligned $(\tvpmo, s_m+r)$-dense polyomino. The number instances of $\vpmo$ in $P$ is $s_m + m \cdot \frac{r+1}{m} - 1$, so $P$ must have at least $4m + \ceil{(r+1)/m}$ subdiagonals by \Cref{lem:subdiagcontain}. It follows from \Cref{lem:subdiagbound} that
    \begin{align*}
        |P| &\geq s_m + r + 4m + \ceil{\frac{r+1}{m}} = s_{m+1} + r + \ceil{\frac{r+1}{m}}.
    \end{align*}
    It remains only to check that $\ceil{(r+1)/m} = \floor{r/m}+1$ when $m$ and $r$ are positive integers. We omit this argument, which is straightforward.
\end{proof}

\subsection{Combining Upper and Lower Bounds}

We conclude with the main result for this section.

\begin{theorem}\label{thm:xmoparameterized}
Let $p$ be $\fmo$, $\xmo$, $\zpmo$, $\tpmo$, or $\vpmo$.
    For any positive integer $N$, there exist a unique positive integer $m$ and unique integer $r$ such that $0 \leq r < 4m$ and $N = s_m + r$. Furthermore, if $r = 0$ then
    $$a_{p,N} = s_{m+1}$$
    and if $r > 0$ then
    $$a_{p,N} = s_{m+1} + r + \floor{\frac rm} + 1.$$
\end{theorem}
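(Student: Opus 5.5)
The plan is to sandwich all five instance sequences between a common upper bound and a common lower bound, exactly as in the proofs of \Cref{thm:trominomain} and \Cref{thm:main}. Existence and uniqueness of $m$ and $r$ with $0 \leq r < 4m$ and $N = s_m + r$ is \Cref{lem:Xmorecursion}. Write $B(m,r)$ for the claimed value, so $B(m,0) = s_{m+1}$ and $B(m,r) = s_{m+1} + r + \floor{r/m} + 1$ for $r>0$. It then suffices to show $B(m,r) \leq a_{\,\tvpmo,N}$ and $a_{\,\txmo,N}, a_{\,\tfmo,N} \leq B(m,r)$.

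The middle of the sandwich comes from the partial order of \Cref{fig:posets}. The X and Z pentominoes are related by row shifts (and rotations), so \Cref{lem:shiftorder} gives $a_{\,\txmo,N} = a_{\,\tzpmo,N}$. Column slides, via \Cref{thm:slideorder}, give $a_{\,\tzpmo,N} \geq a_{\,\ttpmo,N}$, $a_{\,\tfmo,N} \geq a_{\,\ttpmo,N}$, and $a_{\,\ttpmo,N} \geq a_{\,\tvpmo,N}$. Rotations preserve instance sequences trivially, so every chain in that figure may be used. Hence, for each $p$ in the statement,
$$a_{\,\tvpmo,N} \leq a_{p,N} \leq \max\bigl(a_{\,\txmo,N}, a_{\,\tfmo,N}\bigr).$$

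The upper bounds are exactly \Cref{lem:Xmo_upper_bound} and \Cref{lem:Fmobound}, each giving $s_{m+1}$ when $r=0$ and $s_{m+1} + r + \floor{r/m} + 1$ when $1 \leq r < 4m$. The lower bound is \Cref{lem:XLowerbound}, which states the same expressions as lower bounds on $a_{\,\tvpmo,N}$. Chaining these gives
$$B(m,r) \leq a_{\,\tvpmo,N} \leq a_{p,N} \leq B(m,r),$$
so equality holds throughout. I would split the write-up into the cases $r=0$ and $r>0$, because the formulas differ there.

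I expect the only delicate step to be confirming that each arrow in \Cref{fig:posets} really is a row shift or a column slide, up to rotation, between the named pentominoes. In particular I would check that $\sigma$ applied to a suitably rotated F or Z pentomino yields a rotation of T, and that $\sigma$ of a rotated T yields a rotation of V. This takes a small explicit check of column heights; for example, the T pentomino rotated so its stem points sideways slides to a V shape. Everything else is direct citation of earlier lemmas.
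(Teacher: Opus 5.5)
Your proposal is correct and follows the paper's proof step for step. It uses \Cref{lem:Xmorecursion} for $m$ and $r$, then the row-shift and column-slide inequalities placing the X and Z pentominoes above T, T above V, and F above T. It then sandwiches everything between the upper bounds of \Cref{lem:Xmo_upper_bound} and \Cref{lem:Fmobound} and the lower bound of \Cref{lem:XLowerbound}, in the two cases $r=0$ and $r>0$.

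The only differences are cosmetic. You reach T by sliding Z rather than X; both work, since $\sigma$ sends X, Z and F to the same T pentomino. You also phrase the T-to-V step as $\sigma$ applied to a rotated T, where the paper uses $\sigma'$.
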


\begin{proof}
    Let $N$ be given.
    The existence and uniqueness of $m$ and $r$ follow from \Cref{lem:Xmorecursion}. 
    As depicted in \Cref{fig:posets}, since $\rho(\xmo) = \zpmo$, $\sigma(\xmo) = \tpmo$ and $\sigma'(\tpmo) = \vpmo$,
    $$a_{\,\tzpmo,N} = \ a_{\,\txmo,N} \geq a_{\,\ttpmo,N} \geq a_{\,\tvpmo,N}.$$ 
    Since $\sigma(\fmo) = \tpmo$,
    $$a_{\,\tfmo,N} \geq a_{\,\ttpmo,N} \geq a_{\,\tvpmo,N}.$$
    The theorem then follows directly from \Cref{lem:Xmo_upper_bound}, \Cref{lem:Fmobound}, and \Cref{lem:XLowerbound}: 
    In the case $r=0$, these lemmas give $s_{m+1}$ as an upper bound to $a_{\,\txmo,N}$ and $a_{\,\tfmo,N}$, and a lower bound to $a_{\,\tvpmo,N}$;
    In the case $r > 0$, these lemmas give $s_{m+1} + r + \floor{r/m} + 1$ as an upper bound to $a_{\,\txmo,N}$ and $a_{\,\tfmo,N}$, and a lower bound to $a_{\,\tvpmo,N}$.
\end{proof}

\section{Closed Forms of Instance Sequences}\label{sec:closedforms}

In this section, we describe closed forms of the instance sequences we have so far given parameterized forms. The closed forms are displayed in \Cref{tab: closed forms}.

    \begin{table}
      \centering
      \begin{tabular}{ll}
        $p$ & $a_{p,N}$ \\[5pt]
        \midrule
        $\tmo$

        & $\ceil{N+\sqrt{2N}+1}$ \\[7pt]

        $\sqmo$,\ $\zmo$,\ $\ttmo$,\ $\lmo$
        
        & $\ceil{N+\sqrt{4N}+1}$ \\[7pt]
        
        $\lpmo$,\ $\smo$,\ $\ppmo$,\ $\lwmo$,\ $\ypmo$
        
        & $\ceil{N+\sqrt{6N+\frac{1}{4}}+\frac{3}{2}}$ \\[7pt]
 
        $\lfmo$,\ $\lxmo$,\ $\lzpmo$,\ $\ltpmo$,\ $\lvpmo$

        & $\ceil{N+\sqrt{8N-4}+2}$ \\[7pt]
        \bottomrule
      \end{tabular}
      \caption{A closed form for $a_{p,N}$ for polyominoes with up to five cells, except for straight polyominoes and the U pentomino ${\protect\umo}$. For $p$ any straight polyomino, $a_{p,N} = |p|+N-1$. \cite{condon2024}}
      \label{tab: closed forms}
    \end{table}

\subsection{Closed Form for L Trominoes}
To prove \Cref{thm:tmo_upper_bound}, we showed that $a_{\,\tmot, t_{m}+r} \leq t_{m+1} + r+ \ceil{ r/m}$ by constructing polyominoes having $t_m+r$ instances of $\tmo$ and $t_{m+1} + r+ \ceil{ r/m}$ cells. By \Cref{thm:trominomain}, we know that these constructions are dense. 

Note that these constructions can have any number of cells greater than 2, except one more than a triangular number. Therefore $(a_{\,\tmot, N})_{N = 1}^\infty$ is the \textit{complementary} increasing sequence to $(t_m+1)_{m=1}^\infty$ within $\mathbb Z^{\geq 3}$.

One formula for the complementary sequence to $(t_m)_{m=1}^\infty$ is $a_n =n+\ceil{\sqrt{2n}\ }$, due to Lambek and Moser \cite{Lambek}.
Adding 1 to each term in this sequence, we arrive at a closed form for $a_{\,\tmot, N}$.

\begin{theorem}
    For any positive integer $N$, $a_{\,\tmot, N} = \ceil{N+\sqrt{2N}+1}.$
\end{theorem}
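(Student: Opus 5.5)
Since $N+1$ is an integer, $\ceil{N+\sqrt{2N}+1} = N+1+\ceil{\sqrt{2N}}$. The claim therefore reduces to
$$a_{\,\tmot,N} = N+1+\ceil{\sqrt{2N}}.$$
I will prove this directly from \Cref{thm:trominomain} rather than through the complementary-sequence remark, since a direct computation avoids relying on the exact form of the Lambek--Moser result.

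Write $N = t_m + r$ with $0 \le r \le m$, as in \Cref{thm:trominomain}. Then
$$a_{\,\tmot,N} = t_{m+1}+r+\ceil{r/m} = N + (m+1) + \ceil{r/m}.$$
It suffices to show that $\ceil{\sqrt{2N}} = m + \ceil{r/m}$, which means $\ceil{\sqrt{2N}} = m$ when $r=0$ and $\ceil{\sqrt{2N}} = m+1$ when $1 \le r \le m$.

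\emph{Case $r = 0$.} Here $2N = m(m+1)$, and $m^2 \le m(m+1) < (m+1)^2$. Hence $m \le \sqrt{2N} < m+1$. Since $2N = m(m+1)$ lies strictly between the consecutive squares $m^2$ and $(m+1)^2$ when $m \ge 1$, $\sqrt{2N}$ is not an integer. So $m < \sqrt{2N} < m+1$ and $\ceil{\sqrt{2N}} = m+1$.

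This does not match the required value $m$, so the bookkeeping needs rechecking. Compare with the formula at $r=0$: $a = t_{m+1} = N + m + 1$, so we need $\ceil{\sqrt{2N}} = m$. That would require $\sqrt{m(m+1)} \le m$, which is false. Testing $N=1$: $t_2 = 3$, while $\ceil{1+\sqrt2+1} = 4$. This exposes an off-by-one issue in either the formula or the decomposition near triangular numbers.

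The key step, and the main obstacle, is therefore to reconcile the boundary cases $r=0$ and $r=m$. Note that $t_m + m$ and $t_{m+1} + 0$ are both decompositions-adjacent: \Cref{lem:polydecompose} allows $r$ up to $(n-2)m = m$, and $t_m + m = t_{m+1} - 1$, so the decomposition is unique. I would recheck the value $a_{\,\tmot,1}$ directly. A single L tromino has 3 cells, so $a_{\,\tmot,1} = 3$, which matches \Cref{thm:trominomain}. The closed form must then be read so that it gives $3$ at $N=1$. This suggests the intended expression is $\ceil{N+\sqrt{2N}}+1$ evaluated with a slightly shifted argument, or a tie-breaking convention at triangular $N$.

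Concretely, I would carry out the verification as follows:
\begin{enumerate}
\item Tabulate both sides for $N = 1,\dots,10$ using \Cref{thm:trominomain}.
\item Locate where they differ. I expect discrepancies only at $N = t_m$.
\item Prove the interval estimate $m < \sqrt{2N} \le m+1$ for $1 \le r \le m$. This holds because $m^2 < m^2 + m + 2r \le (m+1)^2 + m - 1$; the upper end needs $2(t_m + r) \le (m+1)^2$, i.e.\ $2r \le m+1$, which only covers part of the range. The remaining values of $r$ would need the bound $\sqrt{2N} \le m+2$ combined with $\ceil{r/m} = 1$.
\end{enumerate}
The bulk of the argument is these elementary square-interval comparisons. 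The delicate point, which I would resolve first, is the exact boundary behavior at triangular $N$, where the statement as written appears to need adjustment.
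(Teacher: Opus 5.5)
\textbf{The statement is false as written, so no proof exists.} Your $N=1$ check already settles this. A single L tromino gives $a_{\,\tmot,1}=3$, while $\lceil 1+\sqrt2+1\rceil=4$. Likewise, at $N=3$ the true value is $6$ but $\lceil 3+\sqrt6+1\rceil=7$. The right conclusion from your reduction is a counterexample. It is not a plan to ``reconcile'' boundary cases or to reinterpret the formula.

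The paper's own justification has the same defect. It correctly observes that the instance sequence is the complement of $(t_m+1)_{m\ge1}$ in $\mathbb{Z}^{\ge3}$. It then quotes the Lambek--Moser complement of the triangular numbers as $n+\lceil\sqrt{2n}\,\rceil$. The correct formula is $n+\lfloor\sqrt{2n}+\tfrac12\rfloor$: the first non-triangular number is $2$, whereas $1+\lceil\sqrt2\,\rceil=3$.

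\textbf{Your repair plan also contains a wrong prediction.} The failures are not confined to $N=t_m$. For $1\le r\le m$ you have $2N=m^2+m+2r$, which exceeds $(m+1)^2$ exactly when $2r>m+1$. In that range $\lceil\sqrt{2N}\,\rceil=m+2$ instead of the needed $m+1$, so the closed form overshoots by one there as well.

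Your step 3 hopes to cover these values of $r$ using $\sqrt{2N}\le m+2$ together with $\lceil r/m\rceil=1$. This cannot succeed, because that is precisely the range where the two sides disagree. For example, take $N=5=t_2+2$. \Cref{thm:trominomain} gives $t_3+2+1=9$, realized by the staircase with rows of lengths $4,3,2$, while $\lceil 5+\sqrt{10}+1\rceil=10$.

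\textbf{What your method does prove.} Your reduction to checking that a rounded square root equals $m+\lceil r/m\rceil$ is the right tool. With the correct rounding it proves, in two lines, the true closed form
\[ a_{\,\tmot,N} \;=\; N+1+\lfloor \sqrt{2N}+\tfrac12\rfloor \;=\; \lceil N+\sqrt{2N+\tfrac14}+\tfrac12\rceil . \]
The two facts needed are:
\begin{itemize}
\item $(m+\tfrac12)^2=m^2+m+\tfrac14$ separates the case $r=0$ from the case $r\ge 1$;
\item $2N\le m^2+3m<(m+\tfrac32)^2$ whenever $r\le m$.
\end{itemize}
This corrected form also matches the pattern of the paper's tetromino and pentomino closed forms.
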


\subsection{Other Closed Forms}
Each of the formulas given in this section can be found using the machinery developed by Lambek and Moser \cite{Lambek}, by first identifying the complementary increasing sequence of positive integers skipped. For conciseness and completeness, we give independent proofs.

\begin{theorem}
    For any positive integer N, if $p$ is $\sqmo$, $\zmo$, $\ttmo$, or $\lmo$ then
    $$a_{p,N} = \ceil{N + \sqrt{4N} + 1 }.$$
\end{theorem}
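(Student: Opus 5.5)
By \Cref{thm:tetromino_parameterized}, it suffices to check that for every positive integer $m$ and integer $r$ with $0 \le r \le 2m$, setting $N = m^2 + r$,
$$\ceil{N + \sqrt{4N} + 1} = (m+1)^2 + r + \ceil{\frac rm}.$$
Since $N$ is an integer, the left side equals $N + 1 + \ceil{2\sqrt{N}}$. The right side equals $m^2 + r + 2m + 1 + \ceil{r/m}$, so after cancelling $N+1 = m^2+r+1$ the identity reduces to
$$\ceil{2\sqrt{m^2+r}} = 2m + \ceil{\frac rm}.$$
The proof is to verify this for the three values $\ceil{r/m} \in \{0,1,2\}$ that occur in the range $0 \le r \le 2m$. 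The main step is to trap $2\sqrt{m^2+r}$ between consecutive integers.

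The cases are as follows.
\begin{itemize}
\item If $r = 0$, then $2\sqrt{m^2} = 2m$ exactly, and both sides equal $2m$.
\item If $1 \le r \le m$, then $\ceil{r/m} = 1$, and we need $2m < 2\sqrt{m^2+r} \le 2m+1$. Squaring, this is $4m^2 < 4m^2 + 4r \le 4m^2 + 4m + 1$. This holds because $0 < r \le m$.
\item If $m < r \le 2m$, then $\ceil{r/m} = 2$, and we need $2m+1 < 2\sqrt{m^2+r} \le 2m+2$. Squaring, this is $4m^2 + 4m + 1 < 4m^2 + 4r \le 4m^2 + 8m + 4$, that is, $4m + 1 < 4r \le 8m + 4$. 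This holds because $r \ge m+1$ gives $4r \ge 4m+4 > 4m+1$, and $r \le 2m$ gives $4r \le 8m < 8m+4$.
\end{itemize}

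All quantities are nonnegative, so squaring preserves the inequalities. In each case this gives the claimed value of $\ceil{2\sqrt{N}}$, and the theorem follows.

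The only place needing care is the boundary values. At $r = m$, the upper bound $4m \le 4m+1$ is strict enough. At $r = m+1$ in the last case, the lower bound $4m+4 > 4m+1$ holds. The endpoint $r = 2m$ is also covered by the last case. Since the decomposition $N = m^2 + r$ is unique by \Cref{lem:polydecompose}, no overlap between consecutive $m$ causes ambiguity.
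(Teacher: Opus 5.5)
Your proof is correct and takes essentially the same route as the paper. Both reduce, via \Cref{thm:tetromino_parameterized}, to showing $\ceil{\sqrt{4N}} = 2m + \ceil{r/m}$, and then check the three cases $r=0$, $1 \le r \le m$, and $m+1 \le r \le 2m$; the paper locates the values of $r$ where $4N$ equals $(2m)^2$, $(2m+1)^2$, $(2m+2)^2$, whereas you square the bracketing inequalities directly, which is only a cosmetic difference.
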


\begin{proof}
    Let $N$ be given.
    By \Cref{thm:tetromino_parameterized}, there exist a unique positive integer $m$ and integer $r$ such that $0 \leq r \leq 2m$ and $N = m^2+r$, and $a_{p,N} = (m+1)^2 + r + \ceil{r/m}$. 

    Let $f = \ceil{N + \sqrt{4N} + 1 }$. We must show $f = a_{p,N}$. Since $f = m^2+r + \ceil{\sqrt{4N}} + 1$ and $a_{p,N} = (m^2 + 2m + 1) + r + \ceil{r/m}$, it suffices to show that $\ceil{\sqrt{4N}} = 2m + \ceil{r/m}$. Note that
    $$4N = 4m^2 + 4r = \left\{\begin{array}{ll}
        (2m)^2 & \text{ if } r = 0,\\
        (2m+1)^2 & \text{ if } r=m+1/4,\\
        (2m+2)^2 & \text{ if } r=2m+1.\\
    \end{array}\right.$$
    As $r$ and $m$ are necessarily integers,
    $$\ceil{\sqrt{4N}} = \left\{\begin{array}{ll}
        2m & \text{ if } r = 0,\\
        2m+1 & \text{ if } 1 \leq r \leq m,\\
        2m+2 & \text{ if } m+1 \leq r \leq 2m,\\
    \end{array}\right.$$
    so $\ceil{\sqrt{4N}} = 2m + \ceil{r/m}$, as was to be shown.
\end{proof}

\begin{theorem}
    For any positive integer N, if $p$ is $\ppmo$, $\smo$ , $\ypmo$, $\lpmo$, or $\wmo$ then
    $$a_{p,N} = \ceil{N+\sqrt{6N+\frac{1}{4}}+\frac{3}{2}} $$
\end{theorem}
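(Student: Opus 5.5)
The plan is to argue exactly as in the tetromino case, reducing the closed form to the parameterized form given by \Cref{thm:Ppentomimo_parameterized}.

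Fix $N$. By \Cref{thm:Ppentomimo_parameterized}, write $N = \poly^5_m + r$ with $0 \leq r \leq 3m$. Then $a_{p,N} = \poly^5_{m+1} + r + \ceil{r/m}$. By \Cref{lem:recursion}, $\poly^5_{m+1} = \poly^5_m + 3m + 1$, so
$$a_{p,N} = N + 3m + 1 + \ceil{\frac rm}.$$
On the other side, since $N$ is an integer,
$$\ceil{N + \sqrt{6N+\tfrac14} + \tfrac32} = N + 1 + \ceil{\sqrt{6N+\tfrac14} + \tfrac12}.$$
It therefore suffices to show
$$\ceil{\sqrt{6N+\tfrac14}+\tfrac12} = 3m + \ceil{\frac rm}.$$

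The key identity comes from \Cref{lem:Conway}: $\poly^5_m = m + 3t_{m-1} = m(3m-1)/2$. Hence
$$6N + \tfrac14 = 9m^2 - 3m + \tfrac14 + 6r = \left(3m - \tfrac12\right)^2 + 6r.$$
Put $a = 3m - \frac12$. For an integer $j \geq 0$, the condition $\sqrt{6N+\frac14} + \frac12 \leq 3m + j$ is equivalent to $a^2 + 6r \leq (a+j)^2$. That is,
$$6r \leq 2aj + j^2 = j(6m - 1 + j).$$

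Next I compute the thresholds for $j = 0,1,2,3$.
\begin{itemize}
\item $j=0$: the condition holds iff $r = 0$.
\item $j=1$: the condition is $6r \leq 6m$, i.e.\ $r \leq m$.
\item $j=2$: the condition is $6r \leq 12m + 2$, which for integer $r$ means $r \leq 2m$.
\item $j=3$: the condition is $6r \leq 18m + 6$, i.e.\ $r \leq 3m + 1$, which always holds in our range.
\end{itemize}
So $\ceil{\sqrt{6N+\frac14}+\frac12}$ equals $3m$ if $r=0$, equals $3m+1$ if $1 \leq r \leq m$, equals $3m+2$ if $m+1 \leq r \leq 2m$, and equals $3m+3$ if $2m+1 \leq r \leq 3m$. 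In each case this is exactly $3m + \ceil{r/m}$, which finishes the argument.

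The only delicate step is the $j=2$ threshold, where integrality of $r$ is needed to discard the extra $\frac13$. Otherwise the proof is a routine verification.
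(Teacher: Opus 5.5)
Your proposal is correct and follows the paper's proof essentially step for step. You reduce, via the parameterized corollary and \Cref{lem:recursion}, to showing $\ceil{\sqrt{6N+\frac14}+\frac12} = 3m + \ceil{r/m}$, and then locate the thresholds $r = 0, m, 2m+\frac13, 3m+1$ at which $\sqrt{6N+\frac14}+\frac12$ equals $3m, 3m+1, 3m+2, 3m+3$. Your identity $6N+\frac14 = (3m-\frac12)^2+6r$ makes the paper's case check slightly more explicit; you could also note explicitly that $\sqrt{6N+\frac14}+\frac12 \geq 3m$, so the ceiling is never below $3m$.
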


\begin{proof}
    Let $N$ be given. By \Cref{thm:Ppentomimo_parameterized}, there exist a unique positive integer $m$ and integer $r$ such that $0 \leq r \leq 3m$, and $N = \poly^5_m+r$, and $a_{p,N} = \poly^5_{m+1}+r + \ceil{r/m}$.

    Let $f = \ceil{N+\sqrt{6N+{1}/{4}}+{3}/{2}}$. We must show $f = a_{p,N}$. Since $f = \poly^5_m + r + \ceil{\sqrt{6N+{1}/{4}}+{1}/{2}}+1$ and $a_{p,N} = (\poly^5_m + 3m + 1) + r + \ceil{r/m}$ by  \Cref{lem:recursion}, it suffices to show that $\ceil{\sqrt{6N+{1}/{4}}+\frac{1}{2}} = 3m + \ceil{r/m}$. This can be checked in cases, using the fact that $\poly^5_m = (3m^2-m)/2$. Note that
    \begin{align*}
        \sqrt{6N+ \frac 14} + \frac 12 &= \sqrt{9m^2-3m +6r + \frac 14} + \frac 12\\
        &= 
        \left\{\begin{array}{ll}
        3m & \text{ if } r = 0,\\
        3m+1 & \text{ if } r = m,\\
        3m+2 & \text{ if } r = 2m + 1/3,\\
        3m+3 & \text{ if } r = 3m + 1.
        \end{array}\right..
    \end{align*}
    As $r$ and $m$ are necessarily integers,
    $$
    \ceil{\sqrt{6N+ \frac 14} + \frac 12} =
    \left\{\begin{array}{ll}
    3m & \text{ if } r = 0,\\
    3m+1 & \text{ if } 1 \leq r \leq m,\\
    3m+2 & \text{ if } m+1 \leq r \leq 2m,\\
    3m+3 & \text{ if } 2m+1 \leq r \leq 3m,\\
\end{array}\right.$$
so $\ceil{\sqrt{6N+{1}/{4}}+\frac{1}{2}} = 3m + \ceil{r/m}$ as was to be shown.
\end{proof}

\begin{theorem}For any positive integer $N$, if $p$ is $\fmo$, $\xmo$, $\zpmo$, $\tpmo$, or $\vpmo$ then
    $$a_{p,N} = \ceil{N+\sqrt{8N-4}+2}.$$
\end{theorem}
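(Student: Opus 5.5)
We follow the same pattern as the two preceding closed-form proofs. Given $N$, \Cref{thm:xmoparameterized} supplies a unique positive integer $m$ and integer $r$ with $0 \leq r < 4m$ and $N = s_m + r$. It also gives $a_{p,N} = s_{m+1}$ if $r=0$ and $a_{p,N} = s_{m+1} + r + \floor{r/m} + 1$ if $r>0$. Write $f = \ceil{N + \sqrt{8N-4} + 2}$. Since $N$ is an integer, $f = s_m + r + 2 + \ceil{\sqrt{8N-4}}$. Using $s_{m+1} = s_m + 4m$ (as in \Cref{lem:Xmorecursion}), it suffices to show
$$\ceil{\sqrt{8N-4}} = \left\{\begin{array}{ll} 4m-2 & \text{ if } r = 0,\\ 4m-1+\floor{r/m} & \text{ if } 1 \leq r < 4m. \end{array}\right.$$

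The key computation is $8N - 4 = 8(2m^2 - 2m + 1) + 8r - 4 = 16m^2 - 16m + 4 + 8r = (4m-2)^2 + 8r$. When $r=0$ this is a perfect square, so $\ceil{\sqrt{8N-4}} = 4m-2$, which settles that case.

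For $r > 0$, we locate $(4m-2)^2 + 8r$ between consecutive squares $(4m-2+j)^2$, $j = 1,2,3,4$. We have $(4m-2+j)^2 - (4m-2)^2 = j(8m - 4 + j)$, so $\sqrt{8N-4} = 4m-2+j$ exactly when $r = j(8m-4+j)/8$. This gives the thresholds
$$r = m - \tfrac{3}{8},\quad 2m - \tfrac{1}{2},\quad 3m - \tfrac{3}{8},\quad 4m.$$
None of the first three is an integer, and the last lies outside the range $r < 4m$. Hence for integer $r$:
\begin{itemize}
\item $\ceil{\sqrt{8N-4}} = 4m-1$ when $1 \leq r \leq m-1$;
\item $\ceil{\sqrt{8N-4}} = 4m$ when $m \leq r \leq 2m-1$;
\item $\ceil{\sqrt{8N-4}} = 4m+1$ when $2m \leq r \leq 3m-1$;
\item $\ceil{\sqrt{8N-4}} = 4m+2$ when $3m \leq r \leq 4m-1$.
\end{itemize}
These four blocks are exactly $\floor{r/m} = 0,1,2,3$, which gives the displayed identity.

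The main obstacle is the boundary bookkeeping, not the algebra. The thresholds sit just below $km$, so we must confirm that $r = km$ falls in the block with $\floor{r/m}=k$, which requires $km > km - \tfrac{3}{8}$ or $km - \tfrac12$. We must also handle the degenerate case $m=1$, where the first block $1 \leq r \leq m-1$ is empty, and check that it causes no inconsistency. Everything else is the same routine casework as in the $\sqmo$ and $\ppmo$ closed forms.
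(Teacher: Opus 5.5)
Your proposal is correct and follows the paper's proof essentially step for step. It uses the same reduction via \Cref{thm:xmoparameterized} to showing $\ceil{\sqrt{8N-4}} = 4m-1+\floor{r/m}$ for $r>0$ (and $4m-2$ for $r=0$), the same thresholds $m-\tfrac38$, $2m-\tfrac12$, $3m-\tfrac38$, $4m$, and the same four-block casework; writing $8N-4=(4m-2)^2+8r$ is only a slightly tidier way to derive those thresholds.
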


\begin{proof}
    Let $N$ be given. 
    Recall that $s_m = m^2 + (m-1)^2$, and $s_{m+1} = s_m + 4m$. 
    By \Cref{lem:Xmorecursion}, there exist a unique positive integer $m$  and unique integer $r$ such that $0 \leq r \leq 4m-1$ and $N = s_m+r$. We treat the cases $r > 0$ and $r = 0$ separately.

    If $r > 0$, then $N = s_m + r$ and $a_{p,N} = (s_{m}+4m) + r + \floor{r/m} + 1$. Let $f = \ceil{N+\sqrt{8N-4}+2}$. We must show $f = a_{p,N}$. Since $f = s_m  + r + \ceil{\sqrt{8N-4}}+2$, it suffices to show that $\ceil{\sqrt{8N-4}} = 4m+ \floor{r/m}-1$. Note that
    \begin{align*}
        \sqrt{8N-4} &= \sqrt{16m^2 - 16m+8r+4} = \left\{
        \begin{array}{ll}
            4m - 2 & \text{ if } r=0, \\
            4m-1 & \text{ if } r = m - 3/8,\\
            4m & \text{ if } r = 2m - 1/2,\\
            4m+1 & \text{ if } r = 3m - 3/8,\\
            4m+2 & \text{ if } r = 4m.
        \end{array}
        \right.
    \end{align*}  
    As $r$ and $m$ are necessarily integers,
    $$\ceil{\sqrt{8N-4}} = \left\{
    \begin{array}{ll}
        4m-1 & \text{ if } 1 \leq r \leq m-1 \\
        4m & \text{ if } m \leq r \leq 2m-1 \\
        4m+1 & \text{ if } 2m \leq r \leq 3m-1 \\
        4m+2 & \text{ if } 3m \leq r \leq 4m-1 \\
    \end{array}
    \right.$$
    so $\ceil{\sqrt{8N-4}} = 4m+ \floor{r/m}-1$ as was to be shown.

    In the case $r = 0$, $N = s_m$ and $a_{p,N} = s_{m+1} = s_m+4m$. As we computed above, $\sqrt{8N-4} = 4m-2$, and so
    $$\ceil{N + \sqrt{8N-4}+2} = \ceil{s_m + 4m-2 + 2}=s_{m+1} = a_{p,N}$$
    as was to be shown.
    \end{proof}

\subsection{Complements to Sequences of Skipped Numbers}
In a recent preprint \cite{Cloitre}, Benoit Cloitre defines an $S(x,y,z)$ sequence to be an increasing sequence of integers $a_k$ starting with $a_1 = x$, such that for $k > 1$, $a_{k}-a_{k-1} = y$ if $k$ occurs in the sequence before position $k$, and otherwise $a_{k}-a_{k-1} = z$. In particular, $S(x,1,2)$ usually increments by 1 but skips an integer at its $k$th position if and only if $k$ does not occur earlier in the sequence. 

Cloitre gives combinatorial descriptions of $S(3,1,2)$ and $S(4,1,2)$ which show these are the same as our instance sequences for $\tmo$ and $\lmo$ respectively. We believe $S(5,1,2)$ is the same as the instance sequence for $\lpmo$, and we suspect that the instance sequence for the L $n$-omino is $S(n,1,2)$ in general.

The instance sequence for $\xmo$ is not an $S(x,y,z)$ sequence, but it appears to have analogous behavior to such a sequence after its second term.

\section{Instance Sequences are Asymptotically Linear}\label{sec:instancesequences}

In this section, we show that the instance sequence $(a_{p,N})_{N=1}^\infty$ for any polyomino $p$ must be asymptotically linear, by providing upper and lower bounds on $a_{p,N+1} - a_{p,N}$ which are independent of $N$.

\begin{lemma}\label{lem: linear lower bound}
    For any polyomino $p$ and positive integer $N$, $a_{p,N} < a_{p,N+1}$.
\end{lemma}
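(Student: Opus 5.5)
Write $P$ for a $(p,N+1)$-dense polyomino, so $|P| = a_{p,N+1}$. We will remove a single cell from $P$ and show that what remains still contains at least $N$ instances of $p$. Then \Cref{lem:connected} gives $a_{p,N} \le a_{p,N+1}-1$, since a possibly disconnected set of cells of minimum size containing $N$ instances is connected, and hence a polyomino. To apply \Cref{lem:connected} we need $|p|\ge 2$; the monomino case is trivial because $a_{p,N}=N$.

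The cell we remove is an extreme one. Among the cells of $P$ lying in the top row of $P$, let $c$ be the right-most. Any instance of $p$ in $P$ containing $c$ must place its own top-right-most cell at $c$. The reason is that any cell of the instance strictly above $c$, or in the same row and to the right of $c$, would lie outside $P$. Since an instance is determined by the position of its top-right-most cell (this is translation), exactly one translate of $p$ contains $c$ in that role. So at most one instance of $p$ in $P$ uses $c$.

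It follows that $S = P\setminus\{c\}$ has $|P|-1$ cells and at least $N$ instances of $p$. Now $a_{p,N}$ is the minimum size of a polyomino with at least $N$ instances. By \Cref{lem:connected}, a minimum-size set of cells with at least $N$ instances is connected, so this minimum over all sets equals $a_{p,N}$. Therefore $a_{p,N}\le |S| = a_{p,N+1}-1 < a_{p,N+1}$.

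The only delicate step is the claim that $c$ lies in at most one instance. This needs a lexicographic-extremality argument, ordering cells by (row, then column), to see that $c$ can only play the role of the lexicographically maximal cell of $p$. Once that is in place the rest is immediate.
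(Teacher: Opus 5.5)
Your proof is correct and is essentially the paper's argument. The paper removes the top cell of the left-most column of $P_{N+1}$ rather than the right-most cell of the top row, notes that this cell can lie in at most one instance of $p$, and then applies \Cref{lem:connected} just as you do; your separate treatment of the monomino case is a small improvement, since \Cref{lem:connected} requires $|p|\ge 2$ and the paper's proof does not address this.
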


\begin{proof}
    Let $P_{N+1}$ be any $(p,N+1)$-dense polyomino. By definition $P$ contains at least $N+1$ instances of $p$. It suffices to show that there exists a set of cells smaller than $P_{N+1}$ that has at least $N$ instances of $p$; by \Cref{lem:connected}, there then exists a polyomino having at least $N$ instances of $p$ that is smaller than $P_{N+1}$.
    
    We remove the top cell $c$ of the left-most column of $P_{N+1}$. If $c$ belongs to any instance of $p$ in $P_{N+1}$, then it must be the top cell of the left-most column of that instance of $p$. Therefore $c$ belongs to at most one instance of $p$ in $P_{N+1}$. The removal of $c$ therefore creates a set of cells smaller than $P_{N+1}$, having at least $N$ instances of $p$, as was to be shown.
\end{proof}

Note that this implies $1 \leq a_{p,N+1} - a_{p,N}$. We now give an upper bound.

\begin{lemma} \label{lem: linear upper bound}
    For any polyomino $p$ and positive integer $N$, $$a_{p,N+1} - a_{p,N} \leq a_{p,2} - a_{p,1}.$$
\end{lemma}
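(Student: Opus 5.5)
Given a $(p,N)$-dense polyomino $P_N$, I will add a small number of cells to it and create at least one new instance of $p$. The target is a set of cells of size $a_{p,N} + (a_{p,2}-a_{p,1})$ containing at least $N+1$ instances of $p$. Then \Cref{lem:connected} (or directly the definition, if the set is already a polyomino) gives $a_{p,N+1} \leq a_{p,N} + a_{p,2} - a_{p,1}$.

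First I will interpret the quantity $a_{p,2}-a_{p,1}$. Clearly $a_{p,1}=|p|$. A $(p,2)$-dense polyomino $P_2$ is the union of two instances of $p$ differing by a translation vector $v \neq 0$, and minimality lets us take $P_2 = p \cup (p+v)$. Then $a_{p,2}-a_{p,1} = |(p+v)\setminus p|$ for the optimal $v$. Equivalently, $v$ maximizes $|p \cap (p+v)|$ over nonzero translations, and $d := |(p+v)\setminus p|$ is the number of cells needed to "extend" a copy of $p$ to a second copy shifted by $v$.

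The construction is as follows. Choose an instance $p_0$ of $p$ in $P_N$ that is extremal in the direction $v$: among all instances of $p$ in $P_N$, take one maximizing the linear functional $x \mapsto \langle x, v\rangle$ (applied to, say, a fixed reference cell), breaking ties lexicographically. Set $P' = P_N \cup (p_0+v)$. Then $|P'| \leq |P_N| + |(p_0+v)\setminus p_0| = a_{p,N}+d$, since the cells of $p_0+v$ already lying in $p_0 \subseteq P_N$ cost nothing. Moreover $p_0+v$ is an instance of $p$ in $P'$, and it is not among the instances in $P_N$. Indeed, its reference cell has a strictly larger $\langle\cdot,v\rangle$ value than $p_0$'s, contradicting maximality; or, if we break ties lexicographically with $v$ itself, it is strictly larger in the chosen order. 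So $P'$ has at least $N+1$ instances. It is connected because $p_0+v$ is connected and meets $P_N$ when $|p_0\cap(p_0+v)|>0$. If the intersection is empty, \Cref{lem:connected} still gives a polyomino at most this size.

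The main obstacle is making the extremality argument airtight, i.e.\ showing that $p_0+v$ really is a new instance rather than one already counted in $P_N$. I would handle this by using a strict total order on translations compatible with $v$: order the translates $p+w$ by $\langle w, v\rangle$, then by a lexicographic tiebreak. Since $\langle w+v,v\rangle = \langle w,v\rangle + |v|^2 > \langle w,v\rangle$, the translate $p_0+v$ is strictly greater than the maximal instance in $P_N$, hence new. Everything else is bookkeeping on cell counts.
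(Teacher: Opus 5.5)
Your proof is correct and is essentially the paper's argument. The paper writes $P_2 = p' \cup T(p')$, assumes without loss of generality that $T$ moves left, and adds $T(\overline p)$ for a left-most instance $\overline p$ of $P_N$, just as you add $p_0+v$ for an instance $p_0$ that is extremal for $\langle\cdot,v\rangle$. Your choice of functional avoids that normalization, including the purely vertical case, and your connectivity remark makes explicit a point the paper leaves implicit.
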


\begin{proof}
    Let $P_2$ be any $(p,2)$-dense polyomino. Then $P_2$ can be thought of as the union of $p'$ some instance of $p$ and a translation of $p'$; call this translation $T$, so that $P_2$ is the union of $p'$ and $T(p')$. 
    Note that the number of cells in $T(p')$ which are not in $p'$ is $a_{p,2} - a_{p,1}$. 
    Without a loss of generality we assume $T$ moves the polyomino to the left, as well as possibly up or down.

    Let $P_N$ be any $(p,N)$-dense polyomino, and let $\overline p$ be any of the left-most instances of $p$ in $P_N$. Since $T(\overline p)$ is to the left of $\overline p$, it is distinct from any instance of $p$ in $P_N$. Furthermore, $T(\overline p)$ contains at most $a_{p,2}-a_{p,1}$ cells which do not already belong to both $\overline p$ and $P_N$.

    Therefore the union of $P_N$ and $T(\overline p)$ has at least $N+1$ instances of $p$, and contains at most $a_{p,N} + (a_{p,2}-a_{p,1})$ cells. Hence
    $$a_{p,N+1} \leq a_{p,N} + (a_{p,2} - a_{p,1}),$$
    from which the lemma follows immediately.
\end{proof}

It follows from \Cref{lem: linear lower bound,lem: linear upper bound} that
$$a_{p,1} + (N-1) \leq a_{p,N} \leq a_{p,1} + (N-1)(a_{p,2} - a_{p,1})$$
for any polyomino $p$ and any positive integer $N$, establishing the following theorem.

\begin{theorem}
    The instance sequence $(a_{p,N})_{N=1}^\infty$ of any polyomino $p$ is asymptotically linear.
\end{theorem}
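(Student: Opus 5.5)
The statement follows by combining the two bounds just established, \Cref{lem: linear lower bound} and \Cref{lem: linear upper bound}. Write $d = a_{p,2} - a_{p,1}$. By \Cref{lem: linear lower bound}, $d \geq 1$. Applying \Cref{lem: linear lower bound} and \Cref{lem: linear upper bound} to each consecutive difference and summing (telescoping) from $1$ to $N-1$ gives
$$a_{p,1} + (N-1) \leq a_{p,N} \leq a_{p,1} + (N-1)d$$
for every positive integer $N$. Hence $1 \leq a_{p,N}/N \leq d + a_{p,1}$ for all $N$, so $a_{p,N} = \Theta(N)$. This is the sense of ``asymptotically linear'' I would state explicitly at the start of the proof.

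If a sharper meaning is intended, namely that $\lim_{N\to\infty} a_{p,N}/N$ exists, I would also prove subadditivity, $a_{p,M+N} \leq a_{p,M} + a_{p,N}$ (possibly with a constant correction), and then apply Fekete's lemma. The construction is modeled on the proof of \Cref{lem: linear lower bound}. Take an $(p,M)$-dense polyomino and an $(p,N)$-dense polyomino. Translate the second far enough to the left and place the two disjointly, so that no instance of $p$ lies in both. This gives a disconnected set with $a_{p,M}+a_{p,N}$ cells and at least $M+N$ instances. By \Cref{lem:connected}, a minimum-size set with at least $M+N$ instances is connected, so $a_{p,M+N} \leq a_{p,M}+a_{p,N}$ exactly.

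Fekete's lemma then gives $\lim a_{p,N}/N = \inf_N a_{p,N}/N$. This limit lies in $[1, d]$ by the bounds above, and in particular it is positive and finite.

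The main obstacle is the subadditivity step. \Cref{lem:connected} concerns minimum sets containing at least $N$ instances among \emph{all} sets of cells, so I must check that $a_{p,N}$ defined over polyominoes equals the minimum over arbitrary cell sets. That is exactly the content of \Cref{lem:connected} when $|p|\ge 2$. The case $|p|=1$ is trivial, since then $a_{p,N}=N$. Everything else is routine telescoping.
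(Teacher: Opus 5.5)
Your telescoping argument is essentially the paper's proof: combining $a_{p,N}<a_{p,N+1}$ with $a_{p,N+1}-a_{p,N}\le a_{p,2}-a_{p,1}$ gives $a_{p,1}+(N-1)\le a_{p,N}\le a_{p,1}+(N-1)(a_{p,2}-a_{p,1})$, i.e.\ $a_{p,N}=\Theta(N)$, which is the sense of ``asymptotically linear'' the paper intends. Your optional subadditivity-plus-Fekete strengthening is also sound (disjoint placement plus the connectedness lemma does give $a_{p,M+N}\le a_{p,M}+a_{p,N}$) but unnecessary; in fact the limit is always $1$, because a $k\times k$ square with $k=\lceil\sqrt{N}\rceil+O(1)$ already contains $N$ instances of $p$, so $N\le a_{p,N}\le N+O(\sqrt{N})$.
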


\section{Novel Prismatic Polyominoes}\label{sec:novelprismatic}

In this section we present novel prismatic polyominoes. Several of these were found through a branch-and-prune search, and are depicted in \Cref{fig: novel prismatic polyominoes}. It is straightforward to check, using the formulas in \Cref{tab: closed forms}, that these are dense as well as de Bruijn.

\begin{figure}
\begin{subfigure}{0.25\textwidth}
    \centering
    \input{Images/L_tromino_3_coloring}{}
    \caption*{$(\,\tmo,3)$-prismatic}
    \label{fig:L tromino 3 coloring}
    \end{subfigure}
\begin{subfigure}{0.3\textwidth}
        \centering
    \input{Images/L_tromino_4_coloring}{}
    \caption*{$(\,\tmo,4)$-prismatic}
    \label{fig:L tromino 4 coloring}
    \end{subfigure}
\begin{subfigure}{0.35\textwidth}
      \centering
    \input{Images/L_pentomino_2_coloring}{}
    \caption*{$(\,\lpmo,2)$-prismatic}
    \label{fig:L pentomino 2 coloring}
\end{subfigure}\\

\begin{subfigure}{0.25\textwidth}
     \centering
    \input{Images/P_pentomino_2_coloring}
    \caption*{$(\,\ppmo,2)$-prismatic}
    \label{fig:P pentomino 2 coloring}
\end{subfigure}
\begin{subfigure}{0.3\textwidth}
     \centering
    \input{Images/F_pentomino_2_coloring}
    \caption*{$(\,\fmo,2)$-prismatic}
    \label{fig:F pentomino 2 coloring}
\end{subfigure}
\begin{subfigure}{0.35\textwidth}
     \centering
    \input{Images/V_pentomino_2_coloring}
    \caption*{$(\,\vpmo,2)$-prismatic}
    \label{fig:V pentomino 2 coloring}
\end{subfigure}
\caption{These prismatic polyominoes were found using a branch-and-prune search. With a row shift, the $({
\protect\lpmo},2)$-prismatic polyomino can be transformed into a $({\protect \ypmo},2)$-prismatic polyomino. The $({\protect\ppmo},2)$-prismatic polyomino can similarly be transformed into an $({\protect \smo},2)$- or $({\protect \wmo},2)$-prismatic polyomino.}
\label{fig: novel prismatic polyominoes}
\end{figure}

We devote the remainder of this section to a method of constructing $(\xmo,2)$-prismatic polyominoes. With a row shift, these can be transformed into $(\zpmo,2)$-prismatic polyominoes.

The unique shape of a $(\sqmo,2)$-prismatic polyomino is a $5 \times 5$ square, though there are 800 possible colorings \cite{condon2024}. We say a $(\sqmo,2)$-prismatic polyomino is \bfb{toric} if its first and last rows are identical, and its first and last columns are identical; one can imagine the shape on a torus, with these rows and columns identified. Two such polyominoes are depicted in \Cref{fig:toric square solutions}. An exhaustive search shows that these are the only toric $(\sqmo,2)$-prismatic polyominoes, up to rotation, reflection, and color permutation.

\begin{figure}
\begin{subfigure}{0.3\textwidth}
    \centering
    \input{Images/Square_torus}
    \caption*{toric square $A$}
\end{subfigure}
\begin{subfigure}{0.3\textwidth}
    \centering
    \input{Images/Square_torus_shifted}
    \caption*{another toric square}
    \end{subfigure}
\centering
    \caption{Two toric $(\protect\sqmo,2)$-prismatic polyominoes.}
    \label{fig:toric square solutions}
\end{figure}

\begin{figure}
    \centering
    \input{Images/Bandaid}
    \caption{Constructing a $({\protect\xmo},2)$-prismatic polyomino. The white squares in this figure are holes in the shapes, not cells.}
    \label{fig:bandaidconstruction}
\end{figure}

We will start our construction with the left toric square from \Cref{fig:toric square solutions}, which we name $A$. The steps of the construction are depicted in \Cref{fig:bandaidconstruction}.
We reflect $A$ across a horizontal line, and swap its colors, to create the colored polyomino $A'$.

We rotate the positions of the cells in $A$ by 45 degrees clockwise, not rotating the cells themselves, while expanding the shape so that the cells fit into alternating positions of an Aztec diamond of order $4+1/2$. We call this new colored shape $T$.  We perform the same operation on $A'$, producing a colored shape we call $T'$.

Lastly, we overlay $T$ and $T'$, so that the shape of $T$ is translated one cell above from the shape of $T'$. We call the final colored polyomino $P$. We note that $P$ has 50 cells and is $(\xmo,32)$-dense. 

There are 16 ways to color the four outer cells of $\xmo$ with two colors. In this construction, each instance of $\sqmo$ in $A$ and $A'$ is translated cell-wise to the four outer cells of a different instance of $\xmo$. Thus each of the ways of coloring the four outer cells of $\xmo$ occurs twice in $P$.
The coloring is de Bruijn because the center cells of these two instances always differ in color when one starts with the toric square $A$ from \Cref{fig:toric square solutions}. This process also generates $({\protect \xmo}, 2)$-prismatic polyominoes when one starts with the toric squares shown in \Cref{fig:bandaidalternates}, though it does not work for all toric squares. We can offer little insight into why the process works for just these particular squares.

\renewcommand{\arraystretch}{1.5}
\begin{figure}
    \centering
    \begin{tabular}{c|c|c}
        \input{Images/Torus_1_1a} \ \input{Images/Torus_1_8a} & \input{Images/Torus_1_2a} \  
        \input{Images/Torus_1_7a} & \input{Images/Torus_1_4a} \ \input{Images/Torus_1_5a} \\[10pt]
        \input{Images/Torus_axes} & \input{Images/Torus_axes_2} & \input{Images/Torus_axes_3}
    \end{tabular}
    \caption{The process from \Cref{fig:bandaidconstruction} produces an $({\protect \xmo}, 2)$-prismatic polyomino starting with any of these squares as $A$, and reflecting over either of the indicated axes to produces $A'$.}
    \label{fig:bandaidalternates}
\end{figure}

\section{Prismatic Polyforms}\label{sec:polyforms}

A \bfb{polyform} is a connected shape made from identical regular polygonal cells glued together edge-to-edge. Polyominoes are polyforms with square cells, which we take to be faces on the square lattice. Similarly, \bfb{polyhexes} are polyforms with hexagonal cells, which we take to be faces on the hexagonal lattice. \bfb{Polypents} are polyforms with pentagonal cells, which we take to be faces of the dodecahedron.

\subsection{Polyhexes}

Many of our results for polyominoes can be extend to polyhexes. We define density and prismatic-ness analogously for polyhexes as for polyominoes. We let $\tau$ be the transformation on sets of square cells that moves each row right by half a cell relative to the row above it, and then replaces each square cell with a hexagonal cell. We call this the \bfb{polyhex shift}. See \Cref{fig: polyhex transformation}.

\begin{figure}
    \centering
    \resizebox{12cm}{!}{\input{Images/PolyhexTransformation}}
    \caption{The polyhex shift $\tau$ maps polyominoes to polyhexes. It maps prismatic polyominoes to prismatic polyhexes.}
    \label{fig: polyhex transformation}
\end{figure}

Note that $\tau$ preserves edge-to-edge connections between cells. Thus for any polyomino $p$, its image $\tau(p)$ is a polyhex. While the converse is not true, there is still as strong connection between dense polyominoes and dense polyhexes.

\begin{lemma}
    For any polyomino $p$ and positive integer $N$, if $P$ is a $(p,N)$-dense polyomino then $\tau(P)$ is a $(\tau(p),N)$-dense polyhex.
\end{lemma}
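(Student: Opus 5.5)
The plan is to show that $\tau$ is a size-preserving bijection between sets of square cells and sets of hexagonal cells that carries instances of $p$ exactly onto instances of $\tau(p)$. Minimality then transfers in both directions, much as in the proof of \Cref{lem:shiftorder}.

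First, fix coordinates. After the half-cell shift, the hexagonal lattice is identified with $\mathbb{Z}^2$: the square cell $(x,y)$ goes to the hex cell with axial coordinates $(x,y)$. In these coordinates $\tau$ is a bijection on cells, and translations correspond exactly. A translation of the square grid by an integer vector $v$ becomes the translation of the hex lattice by the corresponding lattice vector, and every hex lattice translation arises this way. Hence, for any set of square cells $S$, the instances of $p$ in $S$ are in bijection with the instances of $\tau(p)$ in $\tau(S)$. Also $|\tau(S)|=|S|$.

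Second, note that adjacency is preserved in one direction only. Square adjacency is a subset of hex adjacency, since each hex cell has two extra neighbours, along the $(1,-1)$ and $(-1,1)$ directions in axial coordinates. So $\tau$ maps polyominoes to polyhexes, as the paper remarks, but $\tau^{-1}$ of a polyhex may be a disconnected set of squares. For this reason I would avoid the density of polyhexes and argue through sets of cells instead. Suppose $H$ is a polyhex of minimum size with at least $N$ instances of $\tau(p)$. Then $\tau^{-1}(H)$ is a set of square cells with at least $N$ instances of $p$ and $|H|$ cells. By \Cref{lem:connected}, applied to the minimum-size set (the case $|p|=1$ is trivial), some polyomino with at most $|H|$ cells contains at least $N$ instances of $p$. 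So $a_{p,N}\le |H|$.

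Conversely, $\tau(P)$ is a polyhex of size $a_{p,N}$ with at least $N$ instances of $\tau(p)$. This forces $|\tau(P)|$ to equal the minimum, so $\tau(P)$ is $(\tau(p),N)$-dense.

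The main obstacle is the step that uses \Cref{lem:connected}. The minimum over sets of square cells must equal the minimum over polyominoes, and that is exactly the content of \Cref{lem:connected} for $|p|\ge 2$. I also need to check that $\tau$ really matches translations one-to-one, which requires writing the half-shift concretely so that row $y$ is shifted by $y/2$ relative to row $0$. With that explicit coordinate description I expect this step to be routine.
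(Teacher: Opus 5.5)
Your proposal is correct and is essentially the paper's argument: push the instances in $P$ forward under $\tau$ to get a polyhex of size $a_{p,N}$, and pull any competing set of hexagonal cells back under $\tau^{-1}$, using \Cref{lem:connected} to handle the fact that the preimage may be disconnected. Your explicit check that translations correspond and your separate treatment of $|p|=1$ (which \Cref{lem:connected} excludes) are minor refinements the paper leaves implicit; the one slip is cosmetic, since the paper's $\tau$ shifts \emph{lower} rows to the right, so the extra hex neighbours lie along $(1,1)$ and $(-1,-1)$, and this changes nothing in the proof.
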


\begin{proof}
    Each instance of $p$ in $P$ is transformed cell-wise into an instance of $\tau(p)$ in $\tau(P)$. 
    Therefore $\tau(P)$ is a polyhex of size $|P|$ having at least $N$ instances of~$\tau(p)$.
    
    If $Q$ is any set of hexagonal cells having at least $N$ instances of $\tau(p)$,
    then each instance of $\tau(p)$ in $Q$ is transformed cell-wise into an instance of $p$ in $\tau^{-1}(Q)$. Thus $\tau^{-1}(Q)$ is a set of square cells having at least $N$ instances of $p$; by \Cref{lem:connected}, $|P| \leq \abs{\tau^{-1}(Q)}$. Since
    $|\tau(P)| = |P| \leq |\tau^{-1}(Q)| = |Q|$, it follows that $\tau(P)$ is $(\tau(p), N)$-dense.
\end{proof}

Because $\tau$ acts cell-wise, it induces a transformation of colored polyominoes, as depicted in \Cref{fig: polyhex transformation}. If $P$ is $(p,n)$-de Bruijn, then each distinct coloring of $p$ in $P$ maps to a distinct coloring of $\tau(p)$ in $\tau(P)$, so that $\tau(P)$ is $(\tau(p), n)$-de Bruijn. Therefore, $\tau$ maps prismatic polyominoes to prismatic polyhexes.

Some polyhexes are not the image of any polyomino under $\tau$, so their prismatic polyhexes cannot be constructed from prismatic polyominoes in this way. For example, $\tfidget$ and $\cthex$ are polyhexes for which $\tau^{-1}(\tfidget)$ and $\tau^{-1}(\cthex)$ are disconnected sets of square cells. In \Cref{fig:fidget spinner} we present de Bruijn polyhexes for these shapes which we conjecture are dense and therefore prismatic.

\begin{figure}
    \centering
    \begin{subfigure}{0.3\textwidth}
    \input{Images/Fidget_spinner_2_coloring}
    \caption*{$(\protect\tfidget, 2)$-de Bruijn}
     \end{subfigure}
     \begin{subfigure}{0.3\textwidth}
         \rotatebox{90}{\input{Images/Ctetrahex2coloring }}
         \caption*{{$(\protect\cthex,2)$-de Bruijn}}
     \end{subfigure}
    \caption{Two de Bruijn polyhexes we conjecture are prismatic. }
    \label{fig:fidget spinner}
\end{figure}

\subsection{Polypents}

On the dodecahedron, three faces that share a vertex form a polyform we call a \bfb{tripent}. We note that there are twenty vertices on the dodecahedron, and therefore twenty tripents. There are also twenty distinct ways of coloring the faces of a tripent with up to four colors, up to rotations and reflections: four colorings with just one color, twelve colorings with two colors, and four colorings with three colors.
The dodecahedron with the net depicted in \Cref{fig:dodecahedron} is de Bruijn in the sense that it includes each of these colorings exactly once.

\begin{figure}
    \centering
    \input{Images/Dodecahedron_net_4_coloring}
    \caption{The net of a prismatic dodecahedron for tripents.}
    \label{fig:dodecahedron}
\end{figure}

\section{Conclusion}\label{sec:conclusion}

We have given explicit formulas for the instance sequences of all trominoes, tetrominoes, and pentominoes except for the U pentomino $\umo$. We note that this is the only polyomino with up to five cells that does not have both convex rows and convex columns. 
Our results imply that the instance sequence for $\umo$ is bounded between those of $\ppmo$ and $\xmo$.
We conjecture that $a_{\,\tumo,N} = a_{\,\xmo,N}$.

Polyomino addition was central to how we computed instance sequences, both in constructing upper bounds and implicitly in finding lower bounds. In all these cases, the multiple of a polyomino $mp$ is $(p, f(m))$-dense for $f$ some increasing function of $m$. We conjecture that the same is true for a broad category of polyominoes.

Efficient methods for generating de Bruijn colorings are not yet known for most $(p,n^{|p|})$-dense polyominoes. In fact, it is not yet known whether such polyominoes always admit a de Bruijn coloring; it remains an open question whether a $(p,n)$-prismatic polyomino exists for every polyomino $p$ and positive integer $n$.

\subsection*{Acknowledgments}

This work was conducted as part of the 2024 SMALL REU at Williams College under grant DMS2241623 from the National Science Foundation. Additional funding was provided by Harvey Mudd College and Amherst College. We thank these institutions for their support, and we thank Steve Miller especially for organizing the 2024 SMALL REU.

\printbibliography

\end{document}